\def\subsection{\@startsection{subsection}{2}
  \z@{.5\linespacing\@plus.7\linespacing}{.5\linespacing}
  {\normalfont\bfseries}}
\def\@defaultbiblabelstyle#1{[#1]}
\def\@setauthors{
  \begingroup
  \def\thanks{\protect\thanks@warning}
  \trivlist
  \centering\footnotesize \@topsep30\p@\relax
  \advance\@topsep by -\baselineskip
  \item\relax
  \author@andify\authors
  \def\\{\protect\linebreak}
  \authors
  \ifx\@empty\contribs
  \else
    ,\penalty-3 \space \@setcontribs
    \@closetoccontribs
  \fi
  \endtrivlist
  \endgroup
}
\def\@settitle{\begin{center}
  \baselineskip14\p@\relax
    \bfseries
  \@title
  \end{center}
}
\setlist[enumerate]{label=\upshape(\arabic*)}
\renewcommand*{\backrefalt}[4]{%
  \ifcase #1 %
  \else
    {↑#2}%
  \fi}
\numberwithin{equation}{section}
\newcommand{\xmiddle}[1]{\;\middle#1\;}
\newtheorem{theorem}{Theorem}[section]
\newtheorem{lemma}[theorem]{Lemma}
\newtheorem{proposition}[theorem]{Proposition}
\newtheorem{corollary}[theorem]{Corollary}
\newtheorem{conjecture}[theorem]{Conjecture}
\newtheorem{question}[theorem]{Question}
\theoremstyle{definition}
\newtheorem{definition}[theorem]{Definition}
\newtheorem{example}[theorem]{Example}
\theoremstyle{remark}
\newtheorem{remark}[theorem]{Remark}
\begin{document}

\title[Asymptotic log-concavity of dominant lower Bruhat intervals]{Asymptotic log-concavity of dominant lower Bruhat intervals via Brunn--Minkowski inequality}


\subjclass[2020]{05E16 (Primary), 05E14, 52A27, 52A38, 51F15 (Secondary)}


\author{Gaston Burrull}
\address{(Gaston Burrull) \newline \indent Beijing International Center for Mathematical Research, Peking University, No.\@~5 Yiheyuan Road, Haidian District, Beijing 100871, China}
\email{gaston(at)bicmr(dot)pku(dot)edu(dot)cn}

\author{Tao Gui}
\address{(Tao Gui) \newline \indent Beijing International Center for Mathematical Research, Peking University, No.\@~5 Yiheyuan Road, Haidian District, Beijing 100871, China}
\email{guitao18(at)mails(dot)ucas(dot)ac(dot)cn}

\author{Hongsheng Hu}
\address{(Hongsheng Hu) \newline \indent School of Mathematics, Hunan University, Changsha 410082, China}
\email{huhongsheng(at)hnu(dot)edu(dot)cn}

\begin{abstract}
Bj\"orner and Ekedahl [Ann.\@~of Math.\@~(2), 170.2(2009), pp.\@ 799--817] pioneered the study of length-counting sequences associated with (parabolic) lower Bruhat intervals in crystallographic Coxeter groups.
In this paper, we study the asymptotic behavior of these sequences in affine Weyl groups.
Let $W$ be an affine Weyl group with corresponding Weyl group~$W_f$ and $\prescript{f}{}{W}$ be the set of minimal representatives for the right cosets $W_f \backslash W$.
Let $t_{\lambda}$ be the translation by a dominant coroot lattice element $\lambda$ and $\prescript{f}{}{b}_i^{t_{\lambda}}$ be the number of elements of length $i$ below $t_\lambda$ in the Bruhat order on~$\prescript{f}{}{W}$, which is the $2i$-dimensional Betti number of a Schubert variety in a certain affine Grassmannian.
We show that the sequence $(\prescript{f}{}{b}_i^{t_{\lambda}})_i$ is ``asymptotically log-concave'' in the following sense:
The sequence of discrete measures $(\mathfrak{m}_k)_k$ constructed from the $k$-fold dilated sequence $(\prescript{f}{}{b}_i^{t_{k\lambda}})_i$, as $k$ tends to infinity, converges weakly to a continuous measure obtained from a polytope $P^\lambda$.
Moreover, the sequence of step functions $(S_k)_k$ of $(\prescript{f}{}{b}_i^{t_{k\lambda}})_i$ converges uniformly to the density function of this continuous measure.
By Brunn--Minkowski inequality, this density is log-concave.
\end{abstract}

\maketitle

\setcounter{tocdepth}{2}
\tableofcontents

\section{Introduction}

\subsection{Background}
Schubert varieties are certain subvarieties of flag varieties.
They are projective algebraic varieties that are usually singular.
Studying classes of Schubert varieties in the cohomology ring of the flag variety leads to important results in enumerative geometry (the classical ``Schubert calculus''), while the study of their intersection cohomology plays a fundamental role in the representation theory of Lie-theoretic objects (the ``Kazhdan--Lusztig theory'').
Following Bj\"orner and Ekedahl~\cite{bjorner2009shape}, we are interested in the behavior of the Betti numbers of Schubert varieties.

More precisely, consider a complex Kac--Moody group~$G$ (a gentle introduction of Kac--Moody groups and their flag varieties can be found in~\cite{kumar2002kacmoody}) with Borel subgroup~$B$ and maximal torus $T$.
The corresponding Weyl group~$W$ has the structure of a crystallographic Coxeter system $(W, S)$, where $S$ is the generating set of simple reflections and we denote by $\ell\colon W \rightarrow \mathbb{N}$ the length function.
For any $J \subset S$, there is a parabolic subgroup $W_J:=\langle s \in J\rangle$ of $W$ with corresponding subgroup $P_J:= B W_J B$ of $G$.

The quotient~$P_J \backslash G$ is a projective (ind-)variety called the \emph{generalized (partial) flag variety}.
We have Bruhat decomposition 
\begin{equation*}
    P_J \backslash G=\bigsqcup_{w  \in \prescript{J}{}{W}} P_J \backslash P_J w B,
\end{equation*}
where $\prescript{J}{}{W}$ is the set of minimal representatives for the right cosets $W_J \backslash W$.
The component $C_w:=P_J \backslash P_J w B$ is called the \emph{Schubert cell} associated with $w \in \prescript{J}{}{W}$.
Topologically, $C_w$ is an $\ell(w)$-dimensional affine space $\mathbb{A}^{\ell(w)}$.
Its closure  $X_w : = \overline{C_w}$, called the \emph{Schubert variety} associated with $w$, is a finite-dimensional projective irreducible subvariety of $P_J \backslash G$.
There is a partial order $\leq$ on~$\prescript{J}{}{W}$ called the \emph{Bruhat--Chevalley order}, defined by $v \leq w$ if $C_v \subseteq X_w$.
Furthermore, we have the decomposition
\begin{equation} \label{eq-Bruhat-dec}
    X_w=\bigsqcup_{ v \in \prescript{J}{}{W}, v \leq w} P_J \backslash P_J v B.
\end{equation}

An interesting question is: 
\begin{question} \label{question how many cells}
How many complex $i$-dimensional cells occur in the decomposition \eqref{eq-Bruhat-dec} of $X_w$?
\end{question} 
Let us denote this number by $\prescript{J}{}{b}_i^w$.
Equation \eqref{eq-Bruhat-dec} gives the equality
\begin{equation}\label{eq-definition-betti-number}
\prescript{J}{}{b}_i^w=\operatorname{Card}\left\{v \in \prescript{J}{}{W} \xmiddle| v \leq w \text { and } \ell(v)=i\right\}, 
\end{equation}
which also equals the $2i$-dimensional Betti number of $X_w$ (the odd dimensional Betti numbers of $X_w$ are 0).

Question~\ref{question how many cells} is difficult to answer in general.
If $X_w$ is smooth, the Poincar\'e duality for ordinary cohomology implies that $\prescript{J}{}{b}_i^w=\prescript{J}{}{b}_{\ell(w)-i}^{w}$.
While the hard Lefschetz theorem for ordinary cohomology of smooth projective varieties implies that the sequence $(\prescript{J}{}{b}_i^w)_i$ is \emph{unimodal}.
This means that for some $0 \leq k \leq \ell(w)$ (here  $k=\left\lceil\ell(w)/2\right\rceil$), we have
\begin{equation*}
\prescript{J}{}{b}_0^w \leq \prescript{J}{}{b}_1^w \leq \cdots \leq \prescript{J}{}{b}_{k-1}^w \leq \prescript{J}{}{b}_{k}^w \geq \prescript{J}{}{b}_{k+1}^w \geq \cdots \geq \prescript{J}{}{b}_{\ell(w)}^w.
\end{equation*}
But $X_w$ is singular in general, hence Poincar\'e duality and hard Lefschetz theorem for ordinary cohomology usually fail.
By means of deep results in Hodge theory such as the purity theorem and hard Lefschetz theorem for the intersection cohomology~\cite{beilinson1982faisceaux}, Bj{\"o}rner and Ekedahl~\cite{bjorner2009shape} showed that, for every $w\in \prescript{J}{}{W}$, the sequence $(\prescript{J}{}{b}_i^w)_i$ satisfies the following two sets of inequalities: 
\begin{gather}
    \prescript{J}{}{b}_i^w \leq \prescript{J}{}{b}_{\ell(w)-i}^{w} \text{ for } i \leq \frac{\ell(w)}{2}, \text{ and } 
    \prescript{J}{}{b}_0^w \leq \prescript{J}{}{b}_1^w \leq \cdots \leq \prescript{J}{}{b}_{\left\lceil\ell(w)/2\right\rceil}^{w}.\label{eq-1-3}
\end{gather}
The first set of inequalities is rephrased as the sequence being \emph{top-heavy}, while the second is the fact that the sequence is weakly increasing in the ``lower half part''.

Some variants of Question~\ref{question how many cells} have been studied.
By Equation  \eqref{eq-definition-betti-number}, one can formulate an analog of Question~\ref{question how many cells} for general Coxeter groups.
Using Soergel bimodules and their Hodge theory established by Elias and Williamson in their foundational paper~\cite{elias2014hodge}, it is proven that the analog of the inequalities \eqref{eq-1-3} holds for general Coxeter groups in the non-parabolic case (that is, $J=\emptyset$, see~\cite{Pa25}).
For the parabolic case, we believe that a proof of these inequalities should follow from the Hodge theory of singular Soergel bimodules, see~\cite{patimo2022singular} and~\cite[Remark 3.18]{Pa25}.
On the other hand, in the context of Schubert varieties of hyperplane arrangements, 
Huh and Wang~\cite{huh2017enumeration} proved Dowling and Wilson's ``Top-Heavy conjecture'' for matroids realizable over some field using Hodge-theoretic ideas analogous to the ones in Bj\"orner and Ekedahl~\cite{bjorner2009shape}.
Later, Braden, Huh, Matherne, Proudfoot, and Wang~\cite{braden2020singular} achieved the remarkable task of generalizing this result to every matroid by establishing Hodge theory for the intersection cohomology of matroids.

Despite these great achievements, the unimodality of $(\prescript{J}{}{b}_i^w)_i$  for the ``upper half part'' remains an interesting open problem.
To the best of our knowledge, there is no partial result yet.
However, conjectures related to this problem have been made.
Before we get into these, let us recall that a sequence $(a_0, a_1, \ldots, a_n)$ of positive real numbers is said to be \emph{log-concave} if 
\begin{equation*}
a_{i-1} a_{i+1} \leq a_i^2 \text{ for all $0<i<n$}.
\end{equation*}
This notion is stronger: log-concave sequences are always unimodal.
Regarding log-concavity of Bruhat intervals, Brenti conjectured the following:

\begin{conjecture}[{\cite[Conjecture 2.11]{brentisome}}] \label{conj-Brenti-log-concavity}
Let $W$ be a Weyl group, and $u, v \in W$.
The sequence $(b_i^{[u, v]})_i$  is log-concave, where \[b_i^{[u, v]}=\operatorname{Card}\left\{w \in W \, | \, u \leq w \leq v, \ell(w)=i\right\}.\]
\end{conjecture}

The parallel conjecture in the setting of matroids and Schubert varieties of hyperplane arrangements is known as ``Rota's unimodality conjecture"~\cite{rota1970combinatorial}.
Its simplest non-trivial case for rank-$3$ realizable matroids is referred to as the ``points-lines-planes conjecture'' and it remains open.

The parabolic analog of Conjecture~\ref{conj-Brenti-log-concavity} does not hold.
For example, consider the symmetric group $W = \mathfrak{S}_{12}$ with generating set $S = \{s_1, \dots, s_{11}\}$ where $s_i=(i,i+1)$ is the corresponding adjacent transposition and $J := S \setminus \{s_4\}$.
According to Stanton~\cite{stanton1990unimodality}, Young's lattice for the partition $(8,8,4,4)$ defines a non-unimodal sequence 
\begin{equation*}
(1,1,2,3,5,6,9,11,15,17,21,23,27,28,\mathbf{31,30,31},27,24,18,14,8,5,2,1).
\end{equation*}
This is the sequence $(\prescript{J}{}{b}_i^w)_i$ corresponding to a parabolic lower Bruhat interval of $\prescript{J}{}{W}$, which gives the Betti numbers of the Schubert variety $X_{(8,8,4,4)}$ inside the Grassmannian $\operatorname{Gr}(4,12)$.

\subsection{Our setting}

Let $W = \mathbb{Z} \Phi\spcheck \rtimes W_f$ be the affine Weyl group with Weyl group~$W_f$ and root system $\Phi$ of rank $r$.
Let $(E, (-|-) )$ be the $r$-dimensional Euclidean space where $\Phi$ lives.
Let $\prescript{f}{}{W}$ (resp.\@~$W^f$) be the set of minimal representatives for the right cosets $W_f \backslash W$ (resp.\@~left cosets $W / W_f$).
We denote by $C_+$ the dominant Weyl chamber.
Let $\lambda \in \mathbb{Z} \Phi \spcheck \cap \overline{C_+}$ be a dominant coroot lattice element, and $t_\lambda \in W$ be the translation by $\lambda$.
Let 
\begin{equation*}
\prescript{f}{}{[e, t_\lambda]}:= \{ w \in \prescript{f}{}{W} \mid w \le t_\lambda \}
\end{equation*}
be the \emph{dominant lower Bruhat interval} (see Section~\ref{sec-pre} for details.)
The inverse map $w \mapsto w^{-1}$ preserves the length and the Bruhat--Chevalley order.
It restricts to a bijection from $\prescript{f}{}{[e, t_\lambda]}$ to $[e, t_{-\lambda}]^f:= \{ w \in W^f \mid w \le t_{-\lambda} \}$, where the second Bruhat interval corresponds to a (spherical) Schubert variety in the corresponding affine Grassmannian as we will explain below.

\subsubsection{Spherical Schubert varieties in the affine Grassmannian} \label{subsec-spherical}
We recommend~\cite[Chapter 9]{achar2021perverse}, \cite{kumar2002kacmoody}, and~\cite{zhu2016introduction} for a more in-depth introduction to the affine Grassmannians.

Let $G$ be the semisimple and simply connected complex algebraic group with coroot lattice $\mathbb{Z} \Phi\spcheck$.
Let $B$ be a Borel subgroup of $G$ and $T \subset B$ be a maximal torus.
Let $F=\mathbb{C}((t))$ with ring of integers $\mathcal{O}=\mathbb{C}[[t]]$.
The algebraic loop group~$G(F)$ has a maximal compact group $K=G(\mathcal{O})$.
Consider the evaluation at $t=0$ map
\begin{equation*}
\mathrm{ev}\colon K=G(\mathcal{O}) \xrightarrow{t=0} G(\mathbb{C})=G,
\end{equation*}
and consider the Iwahori subgroup
\begin{equation*}
I:=\mathrm{ev}^{-1}\left(B\right).
\end{equation*}
The \emph{affine Grassmannian} associated to $G$ is
\begin{equation*}
\mathcal{G} r:=G(F)/K,
\end{equation*}
which is an ind-projective variety.

The affine Grassmannian $
\mathcal{G} r$ has the following decomposition into $K$-orbits: 
\begin{equation*}
\mathcal{G} r=\bigsqcup_{\lambda \in \mathbb{Z} \Phi \spcheck \cap \overline{C_+}} \mathcal{G} r_\lambda  \text { where }  \mathcal{G} r_\lambda:=K t^\lambda K / K.
\end{equation*}
We regard $t^\lambda$ as a point in~$G(\mathbb{C}\left[t^{\pm 1}\right]) \subset G(F)$.
Let $N_{G(F)}(T)$ be the normalizer of $T$ in~$G(F)$.
The isomorphism $N_{G(F)}(T)/T \rightarrow W$ sends $t^\lambda$ to the anti-dominant translation $t_{-\lambda}\in W^f$~\cite[Section 4.1]{LR16}.
The corresponding \emph{spherical Schubert variety} is 
\begin{equation*}
\overline{\mathcal{G} r_\lambda}=\bigsqcup_{\mu \in \mathbb{Z} \Phi \spcheck \cap \overline{C_+},\ \mu \preceq \lambda}\mathcal{G} r_\mu,
\end{equation*}
where $\mu \preceq \lambda$ if and only if $\lambda-\mu$ is a sum of positive coroots.
It plays an important role in geometric representation theory since its intersection cohomology carries the structure of the simple $G\spcheck$-module of the Langlands dual group~$G\spcheck$ with highest weight $\lambda$~\cite{lusztig1983singularities} via the celebrated geometric Satake equivalence~\cite{mirkovic2007geometric}, a cornerstone of the geometric Langlands program.

On the other hand, the $I$-orbits on~$\mathcal{G} r$ give the Bruhat decomposition
\begin{equation*}
\mathcal{G} r=\bigsqcup_{x \in W^f} I x K / K.
\end{equation*}
We have the Schubert variety 
\begin{equation*}
X_y:=\overline{IyK/K}=\bigsqcup_{x \in W^f,\ x \leq y}IxK/K.
\end{equation*}
Each $I$-orbit $IxK/K$ is a Schubert cell which is isomorphic to the affine space of dimension $\ell(x)$.

By considering the $I$-orbits, we have~\cite[Section 4.3]{LR16}
\begin{equation*}
\overline{\mathcal{G} r_\lambda}=X_{t_{-\lambda}},
\end{equation*}
that is, the spherical Schubert variety $\overline{\mathcal{G} r_\lambda}$ is a special Schubert variety $X_{t_{-\lambda}}$ indexed by the translation $t_{-\lambda} \in W^f$, where $-\lambda$ is anti-dominant.
After taking inverses, we can see that the topological information of $\overline{\mathcal{G} r_\lambda}$ is encoded in the dominant lower Bruhat interval $\prescript{f}{}{[e, t_{\lambda}]}$.

\subsection{Main results}
In this paper, we prove that the length-counting sequence 
\begin{equation*}
\prescript{f}{}{b}_i^{t_\lambda} : = \operatorname{Card} \bigl\{w \in \prescript{f}{}{[e, t_\lambda]} \bigm| \ell(w) = i \bigr\}, \quad i = 0, 1, \dots, \ell(t_\lambda),
\end{equation*}
of $\prescript{f}{}{[e, t_\lambda]}$, which is the sequence of Betti numbers of the spherical Schubert variety $\overline{\mathcal{G} r_{-w_0 \lambda}}=X_{t_{-\lambda}}$ is \emph{asymptotically log-concave}.
This statement consists of two parts: 
\begin{itemize}
    \item Theorem~\ref{thm-main-in-the-introduction}, which informally speaking, tells us that the ``shape'' of the length-counting sequences of the $k$-fold dilated intervals $\prescript{f}{}{[e, t_{k\lambda}]}$ converges to a continuous function as $k$ tends to infinity (see Figure~\ref{fig: betti convergence} for an illustration).
    \item Corollary~\ref{cor-den}, which states that this continuous function is log-concave.
\end{itemize}

\subsubsection{Asymptotic convergence} 
Let $\lambda \in \mathbb{Z} \Phi\spcheck \cap \overline{C_+}$.
We define the convex polytope
\[P^\lambda := \operatorname{Conv} \{w \lambda \mid w \in W_f\} \cap \overline{C_+} \subset E,\]
where $\operatorname{Conv}$ denotes the convex hull of a set (see Section~\ref{subsec-def-Plambda} for details.)

Let $\operatorname{ht}\colon P^\lambda \to \mathbb{R}$ be the height function $\operatorname{ht}(x) := (2\rho|x)$, where $\rho$ is the half-sum of positive roots.
In particular, $\operatorname{ht} (\lambda) = \ell(t_\lambda)$.
We denote by $\operatorname{Vol}_r$ the Lebesgue measure on~$E$ and by $\mathrm{ht}_* \mathrm{Vol}_r$ the corresponding push-forward measure on~$\mathbb{R}$.
We also denote by $\operatorname{Vol}_{r-1}$ the Lebesgue measure on affine hyperplanes of $E$.
Then, the density function of $\mathrm{ht}_* \mathrm{Vol}_r$, which is 
\[g(z) = \frac{1}{\lVert 2 \rho \rVert} \operatorname{Vol}_{r-1} (\operatorname{ht}^{-1} (z)), \quad z \in \mathbb{R},\]
evaluates volumes of hyperplane sections of the polytope $P^\lambda$ up to a scalar (see Section~\ref{sec-main} for details.)

Let $\delta_{z}$ denote the Dirac measure (that is, point mass) at the point $z \in \mathbb{R}$.
For any positive integer $k$, we define the discrete measure $\mathfrak{m}_k$ supported on~$[0, \ell(t_\lambda)]$ by
\begin{equation} \label{measure}
    \mathfrak{m}_k := \frac{1}{k^r} \sum_{0 \le i \le k\ell(t_{\lambda})} \prescript{f}{}{b}_i^{t_{k\lambda}} \delta_{\frac{i}{k}}.
\end{equation}
Intuitively, $\mathfrak{m}_k$ distributes the sequence $(\prescript{f}{}{b}_i^{t_{k\lambda}})_i$ evenly on the interval $[0,\ell(t_\lambda)]$.

We also define the step function $S_k\colon [0, \ell(t_\lambda)] \to \mathbb{R}$ by
\[S_k(z) := \frac{1}{k^{r-1}} \prescript{f}{}{b}_{i}^{t_{k\lambda}}, \text{ whenever } z \in \left[\frac{i}{k}, \frac{i+1}{k} \right).\]
The function $S_k$ records the numbers $(\prescript{f}{}{b}_i^{t_{k\lambda}})_i$ and behaves like the ``density function'' of $\mathfrak{m}_k$ (see Section~\ref{sec-main-unifm-convg} for details.)
The following is our main theorem.

\begin{theorem} \label{thm-main-in-the-introduction} 
Let $\operatorname{Vol}_r(A_+)$ be the volume of the fundamental alcove $A_+$.
\begin{enumerate} 
    \item \label{thm-main-1-in-the-introduction} {\em (The weak convergence of $(\mathfrak{m}_k)_k$).} The sequence of measures $(\mathfrak{m}_k)_k$, as $k$ tends to infinity, converges weakly to the measure
    \begin{equation*}
        \frac{1}{\operatorname{Vol}_r(A_+)} \mathrm{ht}_* \mathrm{Vol}_r.
    \end{equation*}
    \item \label{thm-main-2-in-the-introduction} {\em (The uniform convergence of $(S_k)_k$).} The sequence of functions $(S_k)_k$, as $k$ tends to infinity, converges uniformly to the function $\frac{1}{\operatorname{Vol}_r(A_+)}g$.
\end{enumerate}
\end{theorem}

This convergence result also holds for general ``dominant'' elements, that is, elements in~$\prescript{f}{}{W}$.
See Section~\ref{sec-general}.

\subsubsection{The dominant lattice formula} \label{subsec-intro-dominant}

We define the \emph{Poincar\'e polynomial} $\pi^{t_\lambda} (q)$ of the sequence $(\prescript{f}{}{b}_i^{t_\lambda})_i$ by
\[\pi^{t_\lambda}(q)  := \sum_{0 \le i \le \ell(t_\lambda)} \prescript{f}{}{b}_i^{t_\lambda} q^i,\]
which is the Poincar\'e polynomial of the singular cohomology of the spherical Schubert variety $\overline{\mathcal{G} r_{\lambda}}=X_{t_{-\lambda}}$ as we explained in the previous section.
Moreover, for $\mu \in \mathbb{Z} \Phi\spcheck$, we write
\[\prescript{\mu}{}{\pi}_f (q) := \sum_{w \in \prescript{\mu}{}{W_f}} q^{\ell(w)},\]
where $\prescript{\mu}{}{W_f}$ is the set of minimal representatives for the right cosets $W_{f, \mu} \backslash W_f$, and $W_{f, \mu}$ is the stabilizer of $\mu$ in~$W_f$.

One of the key ingredients to prove Theorem~\ref{thm-main-in-the-introduction} is the following formula, which is one of our most important results (see Figure~\ref{fig: similarity} for an illustration).

\begin{theorem}[Dominant lattice formula]\label{thm-lattice-formula-in-the-introduction} Let $\lambda \in \mathbb{Z} \Phi\spcheck \cap \overline{C_+}$.
We have
    \begin{equation*} 
      \pi^{t_\lambda} (q) = \sum_{\mu \in P^\lambda \cap \mathbb{Z} \Phi\spcheck} q^{(2\rho | \mu)} \cdot \prescript{\mu}{}{\pi_f}(q^{-1}).
    \end{equation*}
\end{theorem}

This formula is a powerful way---in terms of computer efficiency---to obtain the generating function of the sequence $(\prescript{f}{}{b}_i^{t_{\lambda}})_i$ from the set of lattice points in~$P^\lambda$.
The evaluation at $q=1$ gives a formula for $\operatorname{Card}(\prescript{f}{}{[e,t_{\lambda}]})$, while the coefficient of $q^i$ gives a formula for $\prescript{f}{}{b}^{t_{\lambda}}_i$.

\subsubsection{Log-concavity and a conjecture on unimodality} 
Recall that a real function $f$ defined on a convex subset $U$ of a vector space $V$ is called \emph{concave} if
$f\left(\frac{a+b}{2}\right) \geq \frac{f(a)+f(b)}{2}$ for any $a,b \in U$.
The following result follows from the classical Brunn--Minkowski inequality.

\begin{theorem}[Brunn--Minkowski, see {\cite[p.\@~270]{okounkov1997log}}]
Let $L_1$ be a real vector space and let $M \subset L_1$ be a convex body.
Let $p\colon L_1 \rightarrow L_2$ be a linear transformation.
Then
\[z \mapsto \Bigl(\operatorname{Vol}\bigl(p^{-1}(z) \cap M\bigr)\Bigr)^{1 /(\dim M - \dim p(M))}\]
is a concave function on~$p(M)$.
\end{theorem}
Applying the above theorem to the map $\mathrm{ht}\colon P^\lambda \rightarrow \mathbb{R}$ and composing with the logarithm function (which is concave), we immediately have the following corollary.
\begin{corollary} \label{cor-den} 
    The density function $g$ of the measure $\mathrm{ht}_*\mathrm{Vol}_r$ is log-concave.
    That is, $\log g$ is a concave function.
\end{corollary}

Note that even though $\log g(z)=-\infty$ if $z \notin [0, \ell(t_\lambda)]$, the notion of concavity still makes sense.

\begin{remark} \label{rmk-log}
Asymptotic log-concavity turns out to be an interesting fact since the sequence $(\prescript{f}{}{b}_i^{t_{\lambda}})_i$ can fail to be log-concave itself.
For example, from the step function in Figure~\ref{fig:poly1}, we observe that the sequence contains the consecutive terms $(4, 4, 5)$.
\end{remark}

In spite of Remark~\ref{rmk-log}, the unimodality of the sequence $(\prescript{f}{}{b}_i^{t_{\lambda}})_i$ might hold without the need to take limits, and it has been tested in many small rank cases (see below).
We state it as a conjecture.
\begin{conjecture} \label{conj-main}
The sequence $(\prescript{f}{}{b}_i^{t_{\lambda}})_i$ is unimodal.
\end{conjecture}

Before listing the computational evidence of the conjecture above, let us define $\lambda$ to be \emph{below} a vector $v$ if, under the basis of fundamental coweights, each coefficient of $\lambda$ is less than or equal to the corresponding coefficient of $v$.

We verified Conjecture~\ref{conj-main} with the software SageMath and the help of our dominant lattice formula (which considerably reduced the time of computations) for several $\lambda\in \mathbb{Z}\Phi^{\spcheck}$ in cases where the rank of $\Phi$ is not too big: 
\begin{itemize}
    \item $\operatorname{rank}(\Phi)\leq 3$: for $\Phi$ of each type, we tested hundreds of $\lambda$, and we omit the details.
    \item $\operatorname{rank}(\Phi)=4$:
    \begin{itemize}[label=$\circ$]
        \item \textit{$\Phi$ of type $A$.} Tested for the $30$ intervals with $\lambda$ below $[2,3,3,2]$.
        \item \textit{$\Phi$ of type $B$.} Tested for the $54$ intervals with $\lambda$ below $[2,2,3,2]$.
        \item \textit{$\Phi$ of type $C$.} Tested for the $48$ intervals with $\lambda$ below $[2,2,3,2]$.
        \item \textit{$\Phi$ of type $D$.} Tested for the $30$ intervals with $\lambda$ below $[2,2,3,2]$.
        \item \textit{$\Phi$ of type $F$.} Tested for the $36$ intervals with $\lambda$ below $[1,2,2,1]$.
    \end{itemize}
    \item $\operatorname{rank}(\Phi)=5$: \textit{$\Phi$ of type A.} Tested for the $24$ intervals with $\lambda$ below $[1,2,3,2,1]$.
\end{itemize}
We did not test other $\Phi$'s due to the limitation of our computer resources.

\subsection{An example} \label{subsec-example} 

\begin{example}[Type $C_3$]\label{Example1}
    Let $W$ be the affine Weyl group associated with the root system $\Phi$ of type $C_3$ and simple roots $\Delta =\{\alpha_1, \alpha_2, \alpha_3\}$.
    Then, $r=3$.
    Following~\cite[Plate III]{bourbaki1968}, we write $\alpha_1=\epsilon_1-\epsilon_2$, $\alpha_2=\epsilon_2-\epsilon_3$, and $\alpha_3=2\epsilon_3$.
    Let 
    \begin{equation*}
        \lambda=3\alpha_1\spcheck+6\alpha_2\spcheck+7\alpha_3\spcheck.
    \end{equation*}
    We have that $\operatorname{ht}(\lambda)=32$.
    For convenience, let $(a,b,c)_\Phi := a\alpha_1\spcheck+b\alpha_2\spcheck+c\alpha_3\spcheck$.
    The polytope $P^\lambda$ is the convex polyhedron with six vertices given by
    \begin{equation*}
        \{(0,0,0)_\Phi, (3,3,3)_\Phi, (3,5,7)_\Phi, (3,6,6)_\Phi, (7/3,14/3,7)_\Phi, (3,6,7)_\Phi\},
    \end{equation*}
    which is an example of a non-lattice polytope.
    Since $\rho=(3,5,3)_\Phi$, we get $\lVert\rho\rVert=\sqrt{14}$.
    By direct computations, we have $\operatorname{Vol}_3(A_+)=1/48$.
    In view of Theorem~\ref{thm-main-in-the-introduction}\ref{thm-main-2-in-the-introduction}, the only missing ingredient to compute the limit function is to determine $\operatorname{Vol}_{2} (\operatorname{ht}^{-1}(z))$.
    By the theory of convex polytopes, this function is a piecewise quadratic polynomial, and its exact form can be obtained by Lagrange interpolation.
    We omit details and give a graph of the function $g/\operatorname{Vol}_3(A_+)$ in Figure~\ref{fig:poly367}.
    
    \begin{figure}[ht] 
        \begin{subfigure}[t]{.48\linewidth}
            \centering
            \includegraphics[scale=0.4]{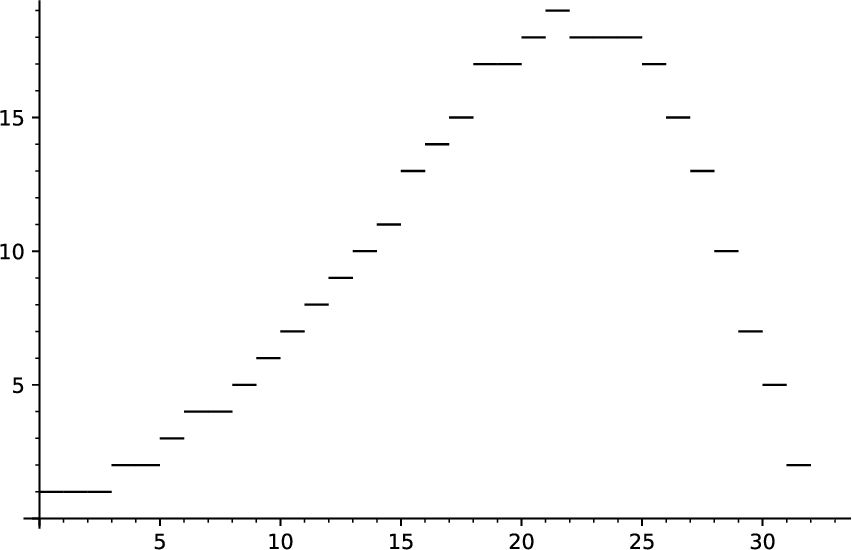}
            \caption{Graph of $S_1$.}
            \label{fig:poly1}
        \end{subfigure}
        \begin{subfigure}[t]{.48\linewidth}
            \centering
            \includegraphics[scale=0.4]{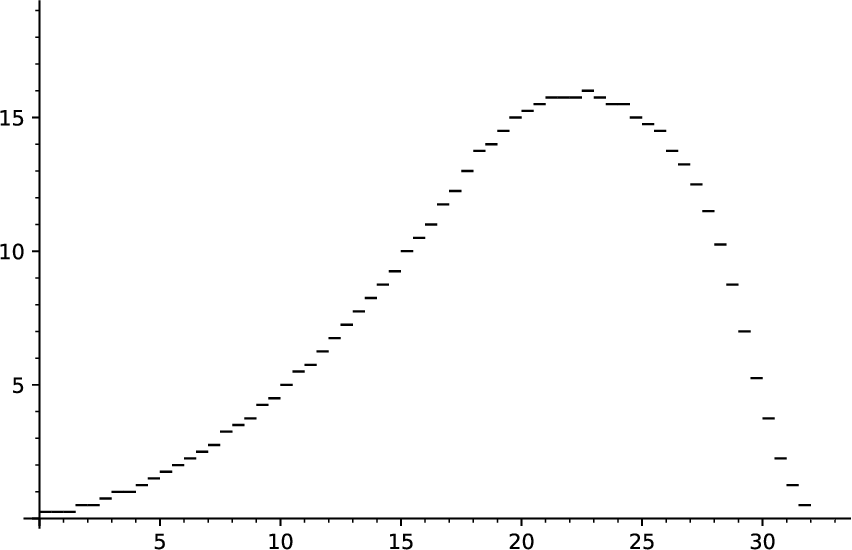}
            \caption{Graph of $S_2$.}
            \label{fig:poly2}
        \end{subfigure}
        \smallbreak
        \begin{subfigure}[t]{.48\linewidth}
            \centering
            \includegraphics[scale=0.4]{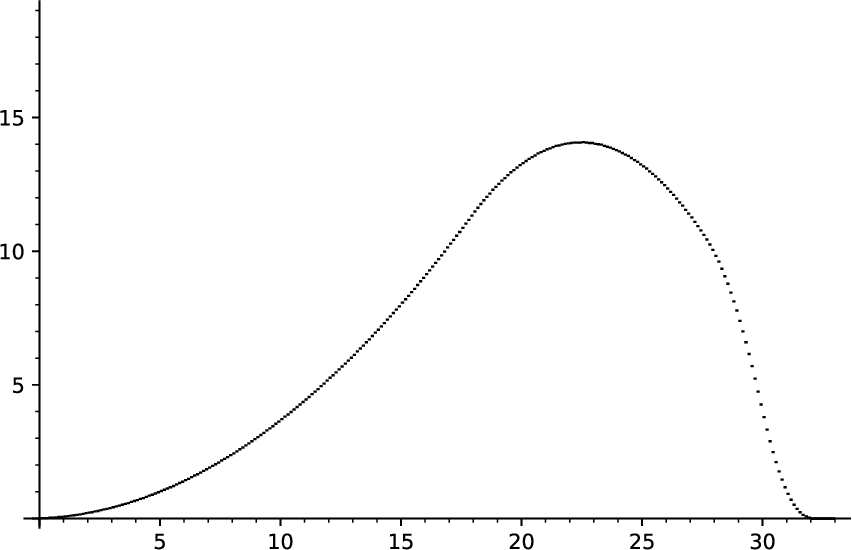}
            \caption{Graph of $S_8$.}
            \label{fig:poly8}
        \end{subfigure}
        \begin{subfigure}[t]{.48\linewidth}
            \centering
            \includegraphics[scale=0.4]{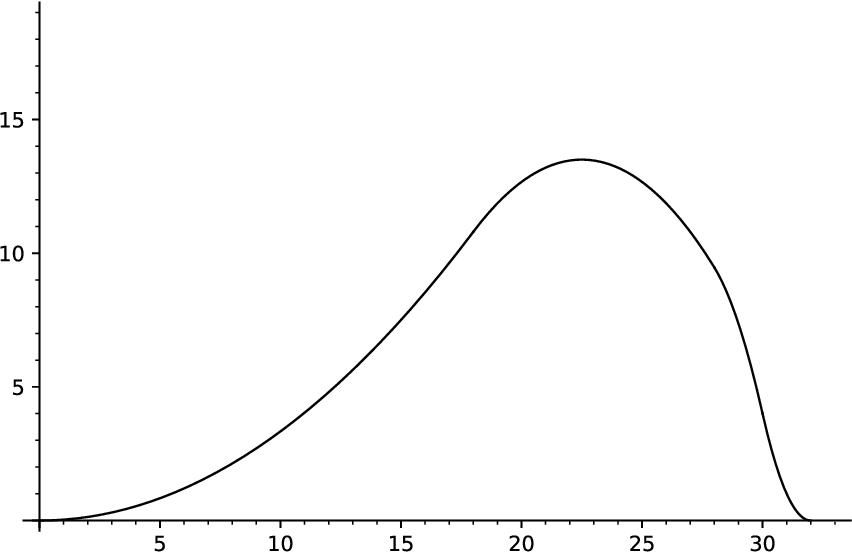}
            \caption{Graph of $g/\operatorname{Vol}_r(A_+)$.}
            \label{fig:poly367}
        \end{subfigure}
        \caption{In the affine Weyl group~$W$ of affine type $C_3$, we consider $\lambda=3\alpha_1\spcheck+6\alpha_2\spcheck+7\alpha_3\spcheck$. These pictures illustrate how the sequence of step functions $S_k\colon [0,\ell(t_\lambda)]\to \mathbb{R}$ converges uniformly to a continuous function.}
        \label{fig: betti convergence}
    \end{figure}
    
    We can use Theorem~\ref{thm-main-in-the-introduction}\ref{thm-main-2-in-the-introduction} to give quick estimates of the terms in the sequence $(\prescript{f}{}{b}_{i}^{t_{k\lambda}})_{i}$ when $k$ is big.
    For instance, when $k=8$, the value of $\prescript{f}{}{b}_{196}^{t_{8\lambda}}$ is virtually impossible to obtain in a computer directly from definitions.
    Let us take $z=24.5(=196/8)$.
    By our theorem, we have
    \begin{equation*}
        S_8(24.5)=\frac{1}{8^2} \prescript{f}{}{b}_{196}^{t_{8\lambda}}\sim 48g(24.5) = \frac{389}{30},
    \end{equation*}
    which gives $\prescript{f}{}{b}_{196}^{t_{8\lambda}}\sim 829.87$.
    
    On the other hand, our dominant lattice formula gives the exact values of the terms in the sequence $(\prescript{f}{}{b}_{i}^{t_{k\lambda}})_{i}$ (which takes a considerable time to get in a computer.) 
    In particular, we have $\prescript{f}{}{b}_{196}^{t_{8\lambda}}= 863$.
    Our quick estimate of $829.87$ was off by $3.84\%$.
    See Figure~\ref{fig: betti convergence} for the graphs of the step functions $S_1$, $S_2$, and $S_8$.
\end{example}

\subsection{Connections with other works} \label{subsec-connection}

\subsubsection{Asymptotic behavior of weight multiplicities} \label{subsec-symplectic}

Let $K$ be a compact connected Lie group with a maximal torus $T$, let $W_f$ be the corresponding Weyl group, and let $\lambda$ be a dominant weight of $T$.
In~\cite{heckman1982projections}, Heckman used the weight multiplicities $\operatorname{dim} V(k \lambda)_\mu$ of the irreducible representation of $K$ with highest weight $k\lambda$ to construct a discrete measure
\begin{equation*}
    \frac{\sum_\mu \operatorname{dim} V(k \lambda)_\mu \delta_{\frac{\mu}{k}}}{\sum_\mu \operatorname{dim} V(k \lambda)_\mu},
\end{equation*}
supported on the weight polytope $\operatorname{Conv}\{w \lambda \mid w \in W_f\}$.
He proved that this sequence of discrete measures, as $k$ tends to infinity, converges weakly to the push-forward of the Liouville measure of the coadjoint orbit of $\lambda$ under the moment map.
The limit measure is the \emph{Duistermaat--Heckman measure} associated with $\lambda$.
Its density function---the \emph{Duistermaat--Heckman function}---is a piecewise polynomial~\cite{duistermaat1982variation}, and Graham proved that it is log-concave via Hodge--Riemann inequalities~\cite{graham1996logarithmic}.

Later, Okounkov~\cite{okounkov1996brunn} introduced the Newton--Okounkov bodies to prove that, in a similar weak limit sense, for any representation $V$ of a reductive group~$G$, the multiplicities of irreducible $G$-modules in the homogeneous coordinate ring of a $G$-stable irreducible subvariety of $\mathbb{P}(V)$ are log-concave.
In a sequel paper~\cite{okounkov2003would}, Okounkov pointed out the importance of log-concavity and related it to the properties of entropy in statistical physics.

Our construction in Equation \eqref{measure} and the formulation of Theorem~\ref{thm-main-in-the-introduction}\ref{thm-main-1-in-the-introduction} draw inspiration from the works of Heckman~\cite{heckman1982projections} and Okounkov~\cite{okounkov1996brunn}.
Actually, Theorem~\ref{thm-main-in-the-introduction}\ref{thm-main-1-in-the-introduction} has the following representation-theoretic interpretation:
Let $G$ be the semisimple and simply connected complex algebraic group and $B$ be a Borel subgroup of $G$ as in Section~\ref{subsec-spherical}.
Let $\check{\mathfrak{g}}$ be the complex semisimple Lie algebra of the Langlands dual group~$G\spcheck$.
Consider a standard principal nilpotent element $e \in \check{\mathfrak{g}}$, and let $X=\left(G\spcheck\right)_e$ denote the stabilizer of $e$ in~$G\spcheck$ under the adjoint action.
The subgroup~$X$ is abelian and has 
dimension equal to the rank of $G$.
For instance, in type $A$, $X$ is the group of upper-triangular unipotent Toeplitz matrices~\cite{rietsch2003totally}.
Let $\check{\mathfrak{g}}^e$ be the abelian Lie algebra of $X$.
By Ginzburg's theorem~\cite[Theorem 1.5]{ginzburg1998loopgrassmanniancohomologyprincipal}, the canonical map 
\begin{equation*}
    H^{\bullet}(\overline{\mathcal{G} r_\lambda}, \mathbb{C}) \rightarrow \mathit{I H}^{\bullet}(\overline{\mathcal{G} r_\lambda}, \mathbb{C})
\end{equation*}
is injective and corresponds, under the geometric Satake equivalence, to the inclusion 
\begin{equation*}
    U \rightarrow V_{\lambda},
\end{equation*}
where $V_{\lambda}$ is the simple $G\spcheck$-module with highest weight $\lambda$ and $U$ is the cyclic submodule of the enveloping algebra $U(\check{\mathfrak{g}}^e)$ generated by a lowest weight vector $v_{\lambda}$ of $V_{\lambda}$.
Note that $\check{\mathfrak{g}}^e$ is stable under the adjoint action of a regular semisimple element and the weight decomposition gives a grading on~$U(\check{\mathfrak{g}}^e)$ and hence on~$U=U(\check{\mathfrak{g}}^e)/\mathrm{Ann}_{U(\check{\mathfrak{g}}^e)}v_{\lambda}$, which corresponds to the cohomological grading on~$H^{\bullet}(\overline{\mathcal{G} r_\lambda}, \mathbb{C})$.
Therefore, Theorem~\ref{thm-main-in-the-introduction}\ref{thm-main-1-in-the-introduction} explicitly computes the Duistermaat--Heckman measure of the one-dimensional torus $\mathbb{C}^\times$ generated by the regular semisimple element acting on~$U$.
The corresponding Duistermaat--Heckman function is then given by the 
density function $g$ of $\mathrm{ht}_* \mathrm{Vol}_r$ up to a constant.

Nevertheless, our framework and proof techniques differ significantly from Heckman's work since there is no symplectic manifold related to the module $U$.
Hence, it is not immediately clear that our original cell-counting problem has such a fundamental connection to the geometry of a convex polytope. 
Furthermore, Theorem~\ref{thm-main-in-the-introduction}\ref{thm-main-2-in-the-introduction} is novel compared to the results of Heckman and Okounkov, and it is stronger than Theorem~\ref{thm-main-in-the-introduction}\ref{thm-main-1-in-the-introduction} (see Remark~\ref{rmk-deduce}).

\subsubsection{Ehrhart's theory} \label{subsec-Ehrhart}
For an $r$-dimensional lattice polytope $P$ (that is, all vertices of $P$ are points of a given lattice $L$), the \emph{Ehrhart polynomial}~\cite{ehrhart1962polyedres} $E(P, k)$ is a rational polynomial in~$k$ that counts the number of lattice points in the $k$-fold dilation $kP$.
That is, there exist rational numbers $E_0(P), \ldots, E_r(P)$, such that
\begin{equation*}
    E(P, k) := \lvert L \cap k P \rvert = E_r(P) k^r+E_{r-1}(P) k^{r-1}+\cdots+E_0(P)
\end{equation*}
for all positive integers $k$.
The leading coefficient, $E_r(P)$, is equal to the $r$-dimensional volume $\operatorname{Vol}_r(P)$ of $P$, divided by the volume $d(L)$ of the fundamental region of the lattice $L$.
This implies that 
\begin{equation} \label{eq-asym}
    \operatorname{Vol}_r(P)=\lim _{k \rightarrow \infty} \frac{d(L)  \cdot \lvert L \cap k P \rvert}{k^r}.
\end{equation}
If $X$ is the toric variety associated with $P$, then $P$ defines an ample line bundle on~$X$. The Ehrhart polynomial of $P$ coincides with the Hilbert polynomial of this line bundle, and the above asymptotic result \eqref{eq-asym} is a consequence of the asymptotic Riemann--Roch theorem~\cite[\href{https://stacks.math.columbia.edu/tag/0BJ8}{Tag 0BJ8}]{stacks}.

The problem in our work is analogous to the one in Ehrhart's theory.
However, we count alcoves in each length rather than all lattice points in the polytope $P^\lambda$.
When $P^\lambda$ is not a lattice polytope, it has no Ehrhart polynomial, and the proof of Theorem~\ref{thm-main-in-the-introduction}\ref{thm-main-2-in-the-introduction} is technical (see Section~\ref{subsec-outline} for an outline). We hope the proof techniques will be helpful in future works concerning general convex polytopes. 

Let $r=\operatorname{rank}(\Phi).$ We want to raise the following question: 
\begin{question}\label{question-total-betti-number}
    For $k$ sufficiently large, is the total Betti number 
    \begin{equation*}
        \operatorname{Card}\left(\prescript{f}{}{[e, t_{k\lambda}]}\right) =  \sum_i \prescript{f}{}{b}_{i}^{t_{k\lambda}}
    \end{equation*}
    a quasi-polynomial\footnote{A function $f\colon \mathbb{N} \rightarrow \mathbb{N}$ is a \emph{quasi-polynomial} if there exist polynomials $p_0, \ldots, p_{s-1}$ such that $f(n)=p_i(n)$ when $i \equiv n \bmod s$. 
    } in~$k$ of degree $r$, with $\frac{\operatorname{Vol}_r(P^\lambda)}{\operatorname{Vol}_r(A_+)}$
     as the leading coefficient?
\end{question}

Our main object of study, the sequence $(\prescript{f}{}{b}_i^{t_{\lambda}})_i$, is related to a refinement of this question, which deals with some slices of $P^\lambda$ instead of $P^\lambda$ as a whole. These slices are hyperplane sections $\operatorname{ht}^{-1}(i)$ of $P^\lambda$, for different values of $i$. Theorem~\ref{thm-lattice-formula-in-the-introduction} relates $(\prescript{f}{}{b}_i^{t_{\lambda}})_i$ with the numbers of lattice points in these different slices. The refinement of Question~\ref{question-total-betti-number} is the following:

\begin{question}\label{question-cardinality-of-section-of-polytope}
    For $k$ sufficiently large, is $\prescript{f}{}{b}_{ki}^{t_{k\lambda}}$ a quasi-polynomial in~$k$ of degree $(r-1)$  with 
    \begin{equation*}
        \frac{\operatorname{Vol}_{r-1} (\operatorname{ht}^{-1} (i))}{\operatorname{Vol}_r(A_+) \cdot \lVert 2 \rho \rVert}
    \end{equation*}
    as the leading coefficient?
\end{question}
If the answer to Question~\ref{question-cardinality-of-section-of-polytope} is yes, this immediately implies Corollary~\ref{cor-layer}.

\begin{figure}[ht] 
    \begin{subfigure}[t]{.45\linewidth}
        \centering
        \includegraphics[scale=0.5]{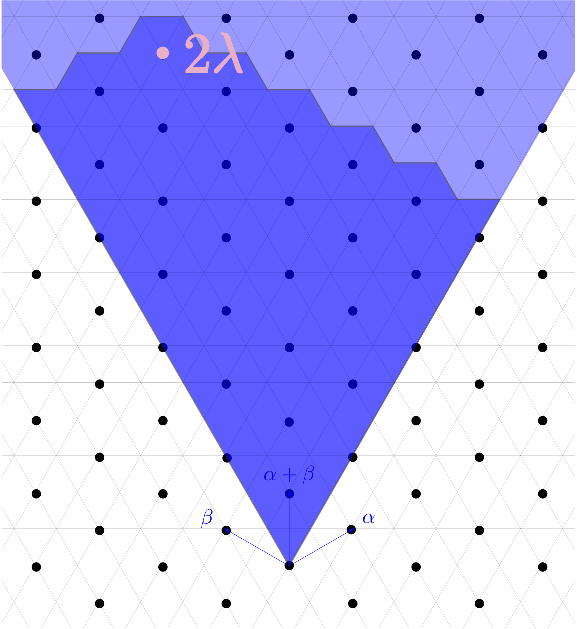}
        \caption{The Bruhat interval $\prescript{f}{}{[e,t_{2\lambda}]}$.}
        \label{fig:2lambda}
    \end{subfigure}
    \begin{subfigure}[t]{.45\linewidth}
        \centering
        \includegraphics[scale=0.5]{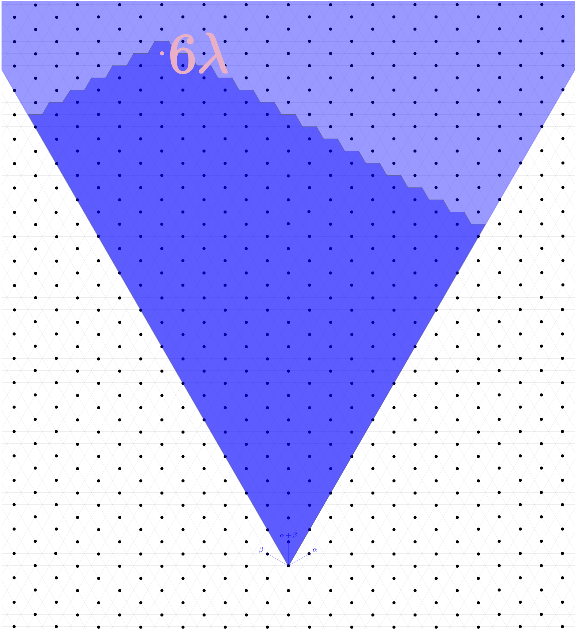}
        \caption{The Bruhat interval  $\prescript{f}{}{[e,t_{6\lambda}]}$.}
        \label{fig:6lambda}
    \end{subfigure}
    \bigbreak
    \begin{subfigure}[c]{\linewidth}
        \centering
        \includegraphics[scale=0.5]{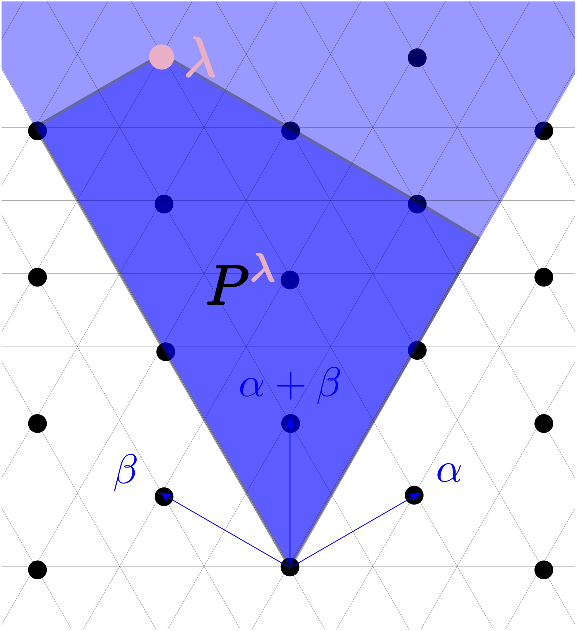}
        \caption{Polytope $P^{\lambda}$.}
        \label{fig:llambda}
    \end{subfigure}
    \caption{Behavior of the intervals $\prescript{f}{}{[e, t_{k\lambda}]}$  when $W$ is of affine type $A_2$ and $\lambda=3\alpha+4\beta$, where $\alpha:=\alpha_1\spcheck$ and $\beta:=\alpha_2\spcheck$. 
    In each picture, the set of small triangles corresponds to the set of alcoves.
    The coroot lattice is indicated by black bullets, and the dominant Weyl chamber is colored blue.
    In the first two pictures, the alcoves corresponding to the elements in the intervals are filled with darker blue.
    So is the polytope $P^\lambda$ in the third picture.}
    \label{fig: limit}
\end{figure}

\subsubsection{Euclidean geometry of alcoves} \label{subsec-alcovic}
In his expository paper~\cite{libedinsky2022introsurvey}, Libedinsky conjectured an intrinsic connection between Bruhat intervals for affine Weyl groups and Euclidean geometry.
The results in the present paper give some evidence in this regard:
From the perspective of Euclidean geometry, a Bruhat interval can be realized as a region---a union of finitely many alcoves---inside $E$. 
As the lattice element $\lambda$ in the interval $\prescript{f}{}{[e, t_{\lambda}]}$ is dilated and the region is rescaled accordingly, the alcoves within it become progressively smaller.
In the limit, the region converges to the polytope $P^\lambda$ (Figure~\ref{fig: limit}). 
This observation is the main geometric intuition behind Theorem~\ref{thm-main-in-the-introduction}. 

Although connections between convex polytopes and the affine Bruhat order are well known, their precise formulations can be unexpectedly subtle.
For example, even in the simplest nontrivial case of type~$\widetilde{A}_2$, making the connection between an affine Bruhat interval and a convex polygon explicit proved highly nontrivial, as shown in~\cite[Theorem C]{BLV25}.

For elements associated with dominant coweights $\mu$, Castillo, de la Fuente, Libedinsky, and Plaza proved a ``Geometric Formula''~\cite[Theorem B]{castillo2023size} relating the cardinality of the corresponding lower Bruhat interval and volumes of faces of the weight polytope $\operatorname{Conv}\{w \mu \mid w \in W_f\}$ using their ``Lattice formula''~\cite[Theorem A]{castillo2023size}.
Our dominant lattice formula is similar to theirs but with some differences: their lattice formula considers the presence of dominant and non-dominant weights, while our formula takes into account the length of each lattice point together with the singular phenomena when lattice points belong to walls of the dominant Weyl chamber.
In view of their work, we would like to ask the following:
\begin{question}\label{question-convex-geometric-formula}
For $\lambda \in \mathbb{Z} \Phi\spcheck \cap \overline{C_+}$, is there a convex geometry formula giving the cardinality of  $\prescript{f}{}{[e, t_\lambda]}$ (that is, the total Betti number $\sum_i \prescript{f}{}{b}_i^{t_{\lambda}}$ of the spherical Schubert variety $\overline{\mathcal{G} r_{-w_0 \lambda}}=X_{t_{-\lambda}}$) in terms of volumes of faces of the polytope $P^\lambda$?
\end{question}

\subsubsection{Measuring top-heaviness}
Recall that Bj{\"o}rner and Ekedahl~\cite{bjorner2009shape} showed that the sequence $(\prescript{J}{}{b}_i^w)_i$ is \emph{top-heavy}, that is, $\prescript{J}{}{b}_i^w \leq \prescript{J}{}{b}_{\ell(w)-i}^{w}$, for $i \leq \frac{\ell(w)}{2}$. It is natural to ask ``how top-heavy'' these sequences are. In this direction---and inspired by the question at the end of~\cite{bjorner2009shape}---we ask the following:
\begin{question}
    For each $\Phi$ and $\lambda \in \mathbb{Z} \Phi\spcheck \cap \overline{C_{+}}$, let $z_{0}(\lambda)$ be the point where the density function of $\mathrm{ht}_* \mathrm{Vol}_{r}(P^\lambda)$ reaches its maximum. 
    Is it true that $\sup\left\{\frac{z_{0}(\lambda)}{\ell\left(t_\lambda\right)}\right\}=1$? Here, the supremum is taken over all root systems and all $\lambda \in \mathbb{Z} \Phi\spcheck \cap \overline{C_{+}}$.
\end{question}

If the answer to the above question is yes, Theorem~\ref{thm-main-in-the-introduction} would reveal the existence of an interesting family of non-trivial parabolic Kazhdan--Lusztig polynomials or, in other words, non-rationally-smooth affine parabolic Schubert varieties. 

\subsection{Structure of the paper}
The remainder of the paper is organized as follows. In Section~\ref{sec-pre}, we introduce our notations and some preliminary results. In Section~\ref{sec-order}, we compare two partial orders related to our study: the partial order $\preceq$ on~$\mathbb{Z} \Phi\spcheck$ and the Bruhat--Chevalley order $\le$ on~$\prescript{f}{}{W}$. 
In Section~\ref{sec-dom-lattice-formula}, we introduce our relevant sequences of study and the polytope $P^\lambda$, and prove the dominant lattice formula. In Section~\ref{sec-main}, we prove the weak convergence of the discrete measure sequence $(\mathfrak{m}_k)_k$. 
In Section~\ref{sec-main-unifm-convg}, we prove the uniform convergence of the step function sequence $(S_k)_k$.
As a byproduct, we obtain a combinatorial identity (Equation  \eqref{eq-main-eq}) related to the root system.

\subsection*{Acknowledgments}
The first author would like to express his gratitude to Anthony Henderson and Geordie Williamson for great discussions regarding the affine Weyl group during his Ph.D.\@~program.
The authors would like to thank Allen Knutson, David Plaza, Alex Weekes, Ge Xiong, and Xinwen Zhu for their valuable communications. 
The third author is supported by the Fundamental Research Funds for the Central Universities (No.~531118010972).

\section{Preliminaries} \label{sec-pre}
In this section, we recollect our notations and some preliminary results for later use. 
Throughout the paper, we use either $\operatorname{Card}(A)$ or $\lvert A \rvert$ to denote the cardinality of a set $A$.

\subsection{The affine Weyl group and alcoves}\label{subsec-affine}
Standard references for Sections~\ref{subsec-affine} and~\ref{subsec-dom} are~\cite{IM1965, bourbaki1968}. 
Let $\Phi$ be the (irreducible, reduced, and crystallographic) root system of a finite-dimensional simple Lie algebra over $\mathbb{C}$, and $E$ be the Euclidean space where $\Phi$ lives.
We denote by $(-|-)\colon E \times E \to \mathbb{R}$ the inner product on~$E$.
For any root $\alpha \in \Phi$, we denote by $\alpha\spcheck$ the corresponding coroot $\frac{2 \alpha}{(\alpha|\alpha)}$. 
Then $\Phi\spcheck := \left\{\alpha\spcheck \in E \mid \alpha \in \Phi \right\}$ is the dual root system.
We fix a set $\Delta = \left\{\alpha_i \mid i = 1,\dots, r\right\}$ of simple roots of $\Phi$, where $r = \dim E$ is the rank of $\Phi$.
Let 
\begin{equation*}
    s_i\colon E \to E, \quad x \mapsto x - (x|\alpha_i) \alpha_i \spcheck, \quad i = 1, \dots, r,
\end{equation*}
be the simple reflections. Let $W_f:= \langle s_1 ,\dots, s_r \rangle$ be the (finite) Weyl group associated with $\Phi$.

For a nonzero vector $\beta \in E$ and a real number $z \in \mathbb{R}$, we have the hyperplane
\begin{equation*}
    H_{\beta,z} := \left\{x \in E \mid (x | \beta) = z \right\}.
\end{equation*}
The \emph{affine Weyl group} $W$ associated with $\Phi$ is defined to be the group of affine transformations generated by affine reflections $\left\{s_{\alpha,k} \mid \alpha \in \Phi^+, k \in \mathbb{Z} \right\}$, where $\Phi^+$ denotes the set of positive roots and $s_{\alpha,k}$ is the affine reflection along the affine hyperplane $H_{\alpha,k}$.
Explicitly, the affine reflection $s_{\alpha,k}$ ($\alpha \in \Phi^+$, $k \in \mathbb{Z}$) is given by
\begin{equation} \label{eq-def-refl}
    s_{\alpha,k}(v) = v + \bigl(k - (v | \alpha) \bigr) \alpha\spcheck = s_{\alpha,0} (v) + k \alpha\spcheck, \text{ for all } v \in E.
\end{equation}
In particular, $s_i = s_{\alpha_i, 0} \in W$ for all $i = 1, \dots, r$.
Let $\alpha_0$ be the highest root in~$\Phi$, and $s_0 := s_{\alpha_0,1}$.
Then, $(W,S)$ is a Coxeter system with the set $S := \{s_0,s_1, \dots, s_r\}$ of simple reflections. 
We denote by $\ell$ the length function on~$W$ and by $\le$ the Bruhat--Chevalley order on~$W$.
For $x, y \in W$, we write $x < y$ if $x \le y$ and $x \ne y$.
The restriction of $\ell$ to the finite Coxeter system $(W_f, S_f)$ is also denoted by $\ell$, where $S_f = S \setminus \{s_0\}$.

The affine Weyl group~$W$ is isomorphic to the semidirect product 
\begin{equation*}
    \mathbb{Z} \Phi\spcheck \rtimes W_f,
\end{equation*}
where $\mathbb{Z} \Phi \spcheck$ is the coroot lattice.
For any element $\lambda \in \mathbb{Z}\Phi\spcheck$, we denote by $t_\lambda \in W$ the translation $x \mapsto x + \lambda$ on~$E$. 
In the semidirect product decomposition, the element $t_\lambda$ corresponds to the pair $(\lambda, \operatorname{id}_{W_f})$.

A connected component of $E \setminus \{H_{\alpha,k} \mid \alpha\in \Phi^+, k \in \mathbb{Z}\}$ is called an \emph{alcove}.
The closure $\overline{A}$ of any alcove $A$ is a fundamental domain for the action of $W$ on~$E$.
Let
\begin{equation*}
    A_+ := \{x \in E \mid 0 < (x|\alpha) < 1 \text{ for all } \alpha \in \Phi^+\}
\end{equation*}
be the \emph{fundamental alcove}. 
The group~$W$ acts on~$E$ and preserves the set of hyperplanes 
\begin{equation*}
    \{H_{\alpha, k} \mid \alpha \in \Phi^+, k \in \mathbb{Z}\}.
\end{equation*}
Therefore, $W$ acts on the set of alcoves. This action is simply transitive, hence induces a natural bijection between $W$ and the set of alcoves
\begin{equation} \label{eq-bij}
    w \mapsto A_w := wA_+, \text{ for any } w \in W.
\end{equation}

The following lemma is classical.

\begin{lemma} \label{lem-alcove-bruhat-order}
    Suppose  $w \in W$, and $s$ is a reflection along some hyperplane $H_{\alpha, k}$ for $\alpha \in \Phi^+$ and $k \in \mathbb{Z}$. Then:
    \begin{enumerate} 
        \item \label{lem-alcove-bruhat-order-1} $sw > w$ if and only if the two alcoves $A_+$ and $A_w$ are in the same side of  $H_{\alpha, k}$, that is, 
        \begin{equation*}
            \bigl((u | \alpha)- k\bigr) \bigl((v | \alpha)- k\bigr) > 0 
        \end{equation*}
        for some (and hence for all) $u \in A_+$ and $v \in A_w$.
        \item \label{lem-alcove-bruhat-order-2} The length $\ell(w)$ of $w$ equals the number of hyperplanes separating $A_+$ and $A_w$, that is, 
        \begin{equation*}
            \ell(w) = \operatorname{Card} \left\{H_{\alpha,k} \xmiddle| 
            \begin{gathered}
                \text{$\alpha \in \Phi^+$, $k \in \mathbb{Z}$, such that $A_+$ and} \\ 
                \text{$A_w$ lie in opposite sides of $H_{\alpha,k}$} 
            \end{gathered}
            \right\}.
        \end{equation*}
        \item \label{lem-alcove-bruhat-order-3} If $s \in S_f$, that is, $\alpha \in \Delta$ and $k=0$, then $ws > w$ if and only if $A_+$ and $A_w$ are in the same side of  $w H_{\alpha, 0}$.
    \end{enumerate}
\end{lemma}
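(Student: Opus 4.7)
The plan is to derive all three parts from a single bookkeeping. Define $N(w)$ to be the number of hyperplanes $H_{\alpha,k}$ (with $\alpha \in \Phi^+$, $k \in \mathbb{Z}$) separating $A_+$ from $A_w$. Part (2) is precisely the claim $N(w) = \ell(w)$. The key input is a reflection lemma: for any affine reflection $s$ along a hyperplane $H$, one has $N(sw) - N(w) = +1$ if $A_+$ and $A_w$ lie on the same side of $H$, and $-1$ otherwise. To see this, note that $s$ is an isometry that permutes the hyperplane arrangement, so the hyperplanes separating $A_+$ from $A_{sw} = sA_w$ correspond, via applying $s$, to those separating $sA_+$ from $A_w$. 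Since $sA_+$ is the mirror image of $A_+$ across $H$, the unique hyperplane whose separating status relative to $A_w$ can change between $A_+$ and $sA_+$ is $H$ itself, giving the claimed $\pm 1$.

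For part (2), the inequality $N(w) \le \ell(w)$ follows by applying the reflection lemma along any reduced expression $w = s_{i_1} \cdots s_{i_{\ell(w)}}$, since $N(e) = 0$ and each step changes $N$ by at most $1$. Conversely, whenever $w \ne e$, I would show that some wall of $A_+$ separates $A_+$ from $A_w$: along any line segment joining a point of $A_+$ to a point of $A_w$, the first hyperplane one crosses must be a wall of $A_+$, by local finiteness of the arrangement. This produces a simple reflection $s_i \in S$ with $N(s_i w) = N(w) - 1$; iterating reaches $e$ in exactly $N(w)$ steps, yielding an expression for $w$ of length at most $N(w)$, so $\ell(w) \le N(w)$.

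Part (1) is then an immediate consequence: $sw > w$ iff $\ell(sw) > \ell(w)$ iff $N(sw) > N(w)$ iff $A_+$ and $A_w$ lie on the same side of $H_{\alpha,k}$. For part (3), I would rewrite right multiplication as conjugated left multiplication, $ws = (w s w^{-1}) \, w$, and observe that $w s w^{-1}$ is the reflection along the image hyperplane $w H_{\alpha,0}$. Applying part (1) to this reflection immediately yields the stated criterion: $ws > w$ iff $A_+$ and $A_w$ are on the same side of $w H_{\alpha,0}$.

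The main obstacle, modest as it is, is the convexity step in (2) which produces a wall of $A_+$ separating $A_+$ from $A_w$ when $w \ne e$; this uses nothing more than local finiteness of the affine arrangement plus the fact that the closure of $A_+$ is cut out by its walls, but it is the only place where the geometry of the fundamental alcove enters nontrivially. Everything else is the $\pm 1$ reflection bookkeeping together with standard Coxeter-theoretic manipulation of reduced expressions.
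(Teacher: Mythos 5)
The paper labels this lemma as ``standard'' and gives no proof for parts \ref{lem-alcove-bruhat-order-1} and \ref{lem-alcove-bruhat-order-2}; its only comment is a short remark deriving \ref{lem-alcove-bruhat-order-3} from \ref{lem-alcove-bruhat-order-1} by passing to inverses. Your derivation of \ref{lem-alcove-bruhat-order-3} via $ws = (wsw^{-1})w$ is essentially the same move expressed by conjugation rather than inversion, and it is correct. Your proof of \ref{lem-alcove-bruhat-order-2} is also correct: the reflection bookkeeping is applied only to \emph{simple} reflections there, and for those the $\pm 1$ increment holds because the relevant wall is a wall of $A_+$.

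The gap is in your ``reflection lemma'' and hence in your derivation of \ref{lem-alcove-bruhat-order-1}. You claim that for \emph{any} affine reflection $s$ along a hyperplane $H$, $N(sw) - N(w) = \pm 1$, because ``the unique hyperplane whose separating status relative to $A_w$ can change between $A_+$ and $sA_+$ is $H$ itself.'' That step requires $H$ to be the only hyperplane separating $A_+$ from $sA_+$, i.e.\ it requires $\ell(s) = 1$, i.e.\ $s$ simple. For a non-simple reflection $A_+$ and $sA_+$ are separated by $\ell(s) > 1$ hyperplanes, and the $\pm 1$ claim fails. Concretely, in affine type $A_1$ (coordinate $y = (v|\alpha)$, so the hyperplanes are $\{y = k\}$, $k \in \mathbb{Z}$, and $A_+ = (0,1)$), take $s = s_{\alpha,2}$ (reflection across $y = 2$) and $w = s$, so $A_w = (3,4)$. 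Then $N(w) = 3$ while $sw = e$ gives $N(sw) = 0$, so $N(sw) - N(w) = -3$, not $-1$. The \emph{sign} statement ``$N(sw) > N(w)$ iff $A_+$ and $A_w$ lie on the same side of $H$'' that you actually need in your chain of equivalences for \ref{lem-alcove-bruhat-order-1} is indeed true, but it is not established by your argument: the symmetric difference of the sets of hyperplanes separating $(A_+, A_w)$ and $(sA_+, A_w)$ is the whole separating set of $(A_+, sA_+)$, not just $\{H\}$, and determining the sign of the count requires an extra input. The standard route is to identify, for a reduced word $w = s_{i_1}\cdots s_{i_n}$, the $n$ hyperplanes crossed by the associated alcove walk with the left inversion reflections $s_{i_1}, s_{i_1}s_{i_2}s_{i_1}, \dots$, and then invoke the strong exchange condition (``$tw > w$ iff $t$ is not a left inversion of $w$''); your argument as written does not supply this identification for non-simple reflections.
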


\begin{remark}
    The statement ``$A_+$ and $A_w$ are in the same side of  $w H_{\alpha, 0}$'' in Lemma~\ref{lem-alcove-bruhat-order}\ref{lem-alcove-bruhat-order-3} is equivalent to ``$A_{w^{-1}}$ and $A_+$ are in the same side of $H_{\alpha, 0}$'', that is, 
    \begin{equation*}
        (u | \alpha) (v | \alpha) > 0 \text{ for some (and hence for all) } u \in A_{w^{-1}} \text{ and } v \in A_+.
    \end{equation*}
    Since taking inverse preserves the Bruhat--Chevalley order, Lemma~\ref{lem-alcove-bruhat-order}\ref{lem-alcove-bruhat-order-3} follows from Lemma~\ref{lem-alcove-bruhat-order}\ref{lem-alcove-bruhat-order-1}.
\end{remark}

By the orbit-stabilizer theorem, we have the following lemma.

\begin{lemma} \label{lem-left-coset-1-1-lattice}
    We have a natural bijection $\mathbb{Z} \Phi\spcheck \to W / W_f$ given by $\lambda \mapsto t_\lambda W_f$.
\end{lemma}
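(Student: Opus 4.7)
The plan is to invoke the orbit-stabilizer theorem for the natural action of $W$ on $E$, with the distinguished point being the origin $0 \in E$. By the semidirect product decomposition $W \cong \mathbb{Z}\Phi^\vee \rtimes W_f$, every $w \in W$ can be written uniquely as $w = t_\mu u$ with $\mu \in \mathbb{Z}\Phi^\vee$ and $u \in W_f$. Since the elements of $W_f$ fix $0$, we compute $w(0) = t_\mu(u(0)) = t_\mu(0) = \mu$.

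From this computation two things follow immediately. First, the $W$-orbit of $0$ is exactly $\mathbb{Z}\Phi^\vee$: every lattice element $\lambda$ is hit by $t_\lambda$, and conversely every $w(0)$ lies in $\mathbb{Z}\Phi^\vee$ by the formula above. Second, the stabilizer $\operatorname{Stab}_W(0)$ is exactly $W_f$: if $w(0)=0$ then $\mu=0$, so $w=u\in W_f$, and conversely $W_f$ fixes $0$. The orbit-stabilizer theorem then yields a bijection
\begin{equation*}
    W/W_f \longrightarrow \mathbb{Z}\Phi^\vee, \quad wW_f \longmapsto w(0),
\end{equation*}
whose inverse sends $\lambda$ to the coset of any element with $w(0)=\lambda$, in particular to $t_\lambda W_f$, as required.

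I do not expect any serious obstacle: the argument is a direct application of the semidirect product structure and orbit-stabilizer, and is really just a repackaging of information already present in the setup of Subsection \ref{subsec-affine}. The only minor subtlety is making sure one uses an action of $W$ (as a group of affine transformations) rather than, say, just the translation subgroup, and that one picks a base point whose $W_f$-stabilizer is all of $W_f$; the origin is the canonical choice. One could equivalently argue by set-theoretic quotient: since every element factors as $t_\mu u$, the cosets $t_\mu u W_f = t_\mu W_f$ are parametrized by $\mu \in \mathbb{Z}\Phi^\vee$, and distinct $\mu$ give distinct cosets because $t_{\mu-\mu'} \in W_f$ forces $\mu=\mu'$ (as $W_f$ contains no nontrivial translation).
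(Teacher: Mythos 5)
Your proof is correct and uses the same approach as the paper: invoke the orbit-stabilizer theorem for the action of $W$ on $E$, identifying the orbit of the origin with $\mathbb{Z}\Phi\spcheck$ and the stabilizer with $W_f$. The paper's proof is essentially a one-line version of your argument; you simply spell out the computation $w(0)=\mu$ via the semidirect product decomposition and add a second, set-theoretic reformulation.
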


\begin{proof}
    The action of $W$ on~$E$ induces a transitive action of $W$ on the lattice $\mathbb{Z} \Phi\spcheck$, and the stabilizer of the origin~$0 \in E$ is exactly $W_f$.
\end{proof}

\begin{remark} \label{rmk-closure}
    The lattice point $\lambda \in \mathbb{Z} \Phi\spcheck$ belongs to the closure of $A_w$ $(w \in W)$ if and only if $w \in t_\lambda W_f$.
    Geometrically, the alcoves $\{A_w \mid w \in t_\lambda W_f\}$ are the translation by $\lambda$ of the alcoves corresponding to $W_f$, and they are ``arranged'' around $\lambda$.
\end{remark}

\subsection{Dominant alcoves and \texorpdfstring{$\prescript{f}{}{W}$}{fW}} \label{subsec-dom}

The \emph{dominant Weyl chamber} $C_+$ is defined by
\begin{equation*}
    C_+ := \{x \in E \mid (x|\alpha_i) > 0, \text{ for all } i = 1, \dots, r\},
\end{equation*}
which is an open simplicial cone in~$E$. The hyperplanes $\{H_{\alpha,0} \mid \alpha \in \Delta\}$ are called the \emph{walls} of $C_+$. 
A point $x \in E$ is called \emph{strongly dominant} if $x \in C_+$, and \emph{dominant} if $x \in \overline{C_+}$, the closure of $C_+$.
The closed cone $\overline{C_+}$ is a fundamental domain for the action of $W_f$ on~$E$. Each alcove $A$ is contained in~$C_+$ or has an empty intersection with $C_+$; in the first case, we say that $A$ is \emph{dominant}.

The Weyl group~$(W_f, S_f)$ is a standard parabolic subgroup of the affine Weyl group~$(W,S)$.
As mentioned in the introduction, $\prescript{f}{}{W}\subset W$ is defined to be the set of minimal representatives for the right cosets $W_f \backslash W$. 
By Lemma~\ref{lem-alcove-bruhat-order}\ref{lem-alcove-bruhat-order-1}, we immediately have the following proposition. It states that under the bijection \eqref{eq-bij} the set $\prescript{f}{}{W}$ corresponds to the set of dominant alcoves. 

\begin{proposition} \label{prop-dom-alcove}
    An element $w \in W$ is the minimal length representative in its right coset $W_f w$ (that is, $w \in \prescript{f}{}{W}$) if and only if $A_w$ is dominant, that is, $A_w \subset C_+$.
    In particular, for any $\lambda \in \mathbb{Z} \Phi \spcheck \cap \overline{C_+}$, we have $t_\lambda \in \prescript{f}{}{W}$.
\end{proposition}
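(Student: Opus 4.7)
The plan is to reduce the proposition to a direct application of Lemma \ref{lem-alcove-bruhat-order} \ref{lem-alcove-bruhat-order-1}. First I would recall that $w \in \prescript{f}{}{W}$ --- meaning $w$ is the minimal length representative in the right coset $W_f w$ --- is equivalent to the condition $s_i w > w$ for every simple reflection $s_i \in S_f$, $i = 1, \dots, r$. This is a standard fact about parabolic quotients of Coxeter systems.

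Next, for each such $s_i$ (which is the reflection along $H_{\alpha_i, 0}$), Lemma \ref{lem-alcove-bruhat-order} \ref{lem-alcove-bruhat-order-1} translates $s_i w > w$ into the geometric condition that $A_+$ and $A_w$ lie on the same side of $H_{\alpha_i, 0}$. Directly from the definition of the fundamental alcove we have $A_+ \subset C_+$, since $(x|\alpha_i) > 0$ for every $x \in A_+$ and every $\alpha_i \in \Delta \subset \Phi^+$; and the dominant chamber $C_+$ is precisely the intersection of the positive half-spaces $\{x \in E \mid (x|\alpha_i) > 0\}$ for $i = 1, \dots, r$. Combining the conditions over all $i$ is therefore the same as requiring $A_w \subset C_+$, which is unambiguous because $A_w$ is connected and disjoint from each $H_{\alpha_i,0}$. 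This gives the biconditional of the first statement.

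For the final assertion, I would simply compute $A_{t_\lambda} = t_\lambda A_+ = A_+ + \lambda$. If $\lambda \in \overline{C_+} \cap \mathbb{Z}\Phi\spcheck$, then for any $x \in A_+$ and any $\alpha_i \in \Delta$,
\begin{equation*}
    (x+\lambda \mid \alpha_i) = (x \mid \alpha_i) + (\lambda \mid \alpha_i) > 0,
\end{equation*}
because the first summand is strictly positive and the second is nonnegative. Hence $A_{t_\lambda} \subset C_+$, and the first part then forces $t_\lambda \in \prescript{f}{}{W}$.

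I do not expect any real obstacle. The only subtlety is the convention that minimality in the right coset $W_f w$ corresponds to left descents from $S_f$, so that the reflections being tested are exactly the $s_i \in S_f$ whose fixed hyperplanes $H_{\alpha_i,0}$ cut out $C_+$; once this is set up correctly, the proof is a direct unwinding of definitions together with Lemma \ref{lem-alcove-bruhat-order}.
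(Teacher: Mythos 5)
Your proof is correct and follows the same route the paper intends: the paper simply states that the proposition follows from Lemma \ref{lem-alcove-bruhat-order} \ref{lem-alcove-bruhat-order-1}, and your argument is precisely the unwinding of that one-liner via the standard characterization of $\prescript{f}{}{W}$ by left ascents and the observation that $A_+ \subset C_+$.
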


Therefore, we call the set $\prescript{f}{}{[e, w]} := \{v \in \prescript{f}{}{W} \mid v\le w\}$ a \emph{dominant lower Bruhat interval}.

Let $\rho := \frac{1}{2} \sum_{\alpha \in \Phi^+} \alpha$ be the half-sum of all positive roots\footnote{The vector $\rho$ is often called the \emph{Weyl vector} in the literature.}.
Moreover, let $\rho \spcheck := \frac{1}{2} \sum_{\alpha \in \Phi^+} \alpha \spcheck$ be the half-sum of all positive coroots (in general, $\rho\spcheck$ and $\frac{2 \rho}{(\rho|\rho)}$ are not the same). The next lemma and its following corollary are standard.

\begin{lemma} [{\cite[Ch.\@~VI, Sect.\@~1, No.\@~10]{bourbaki1968}}] \label{lem-rho}
    We have $(\rho| \alpha_i\spcheck) = 1$ and $(\rho \spcheck|\alpha_i) =1$ for all $i = 1, \dots, r$.
\end{lemma}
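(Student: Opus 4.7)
The plan is to exploit the classical action of a simple reflection on the set of positive roots: $s_i$ sends $\alpha_i$ to $-\alpha_i$ and permutes $\Phi^+ \setminus \{\alpha_i\}$ among themselves. This is the standard fact proved in Bourbaki (VI.1 Corollaire 1 of Proposition 17), and it is really the only nontrivial input. Once it is in hand, both identities drop out from comparing two different expressions for $s_i(\rho) - \rho$ (respectively $s_i(\rho\spcheck) - \rho\spcheck$): one coming from the explicit reflection formula, the other from the permutation property of $s_i$ on the summands defining $\rho$ (respectively $\rho\spcheck$).

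For the first identity I would compute directly from the definition $\rho = \tfrac{1}{2}\sum_{\alpha \in \Phi^+} \alpha$. Since $s_i$ permutes $\Phi^+ \setminus \{\alpha_i\}$ and sends $\alpha_i \mapsto -\alpha_i$,
\begin{equation*}
    s_i(\rho) \;=\; \tfrac{1}{2}\!\!\sum_{\alpha \in \Phi^+ \setminus \{\alpha_i\}}\!\! \alpha \;-\; \tfrac{1}{2}\alpha_i \;=\; \rho - \alpha_i.
\end{equation*}
On the other hand, the reflection formula defining $s_i$ reads $s_i(x) = x - (x|\alpha_i\spcheck)\,\alpha_i$, so $s_i(\rho) = \rho - (\rho|\alpha_i\spcheck)\alpha_i$. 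Comparing gives $(\rho|\alpha_i\spcheck)\alpha_i = \alpha_i$ and hence $(\rho|\alpha_i\spcheck) = 1$.

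For the second identity I would run the same argument in the dual root system $\Phi\spcheck$. The map $\alpha \mapsto \alpha\spcheck$ is $W_f$-equivariant and sends $\Phi^+$ bijectively onto $(\Phi\spcheck)^+$ with $\alpha_i \mapsto \alpha_i\spcheck$; in particular $s_i$ permutes $(\Phi\spcheck)^+ \setminus \{\alpha_i\spcheck\}$ and negates $\alpha_i\spcheck$. The same computation applied to $\rho\spcheck = \tfrac{1}{2}\sum_{\alpha \in \Phi^+} \alpha\spcheck$ therefore yields
\begin{equation*}
    s_i(\rho\spcheck) \;=\; \rho\spcheck - \alpha_i\spcheck \;=\; \rho\spcheck - \tfrac{2}{(\alpha_i|\alpha_i)}\,\alpha_i.
\end{equation*}
Meanwhile the reflection formula gives $s_i(\rho\spcheck) = \rho\spcheck - \tfrac{2(\rho\spcheck|\alpha_i)}{(\alpha_i|\alpha_i)}\,\alpha_i$. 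Equating the coefficients of $\alpha_i$ yields $(\rho\spcheck|\alpha_i) = 1$.

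There is essentially no obstacle here beyond citing the permutation property; the rest is a one-line calculation in each case. If I wanted a fully self-contained proof I would have to recall why $s_i$ preserves $\Phi^+ \setminus \{\alpha_i\}$, which is proved by writing any $\alpha \in \Phi^+ \setminus \{\alpha_i\}$ in the simple root basis, noting some coefficient $c_j$ with $j \ne i$ is strictly positive, and observing that $s_i(\alpha)$ has the same $c_j$, forcing $s_i(\alpha) \in \Phi^+$. But since the statement is invoked from Bourbaki, I would simply quote it.
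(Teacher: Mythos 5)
Your proof is correct and is exactly the standard Bourbaki argument that the paper invokes by citation — the paper itself gives no proof of this lemma, only the reference to Ch.\ VI, Section 1, No.\ 10. Both halves of your calculation (comparing $s_i(\rho)=\rho-\alpha_i$ against the reflection formula, and the dual version for $\rho\spcheck$) are sound, and your remark about the $W_f$-equivariance of $\alpha\mapsto\alpha\spcheck$ and the preservation of positivity correctly handles the only point one might worry about in the coroot case.
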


\begin{corollary} \label{cor-rho}
    There is $\delta \in \mathbb{R}_{>0}$ such that $\varepsilon \rho \spcheck \in A_+$ for any $0 < \varepsilon < \delta$.
\end{corollary}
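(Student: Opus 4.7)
The plan is to unpack the definition of $A_+$ and check the two required inequalities for $\varepsilon \rho\spcheck$ separately. Recall
\begin{equation*}
    A_+ = \{x \in E \mid 0 < (x|\alpha) < 1 \text{ for all } \alpha \in \Phi^+\},
\end{equation*}
so $\varepsilon \rho\spcheck \in A_+$ is equivalent to the finite family of inequalities $0 < \varepsilon (\rho\spcheck \mid \alpha) < 1$ for $\alpha \in \Phi^+$.

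First I would handle positivity. Writing any positive root as a non-negative integral combination $\alpha = \sum_{i=1}^r c_i \alpha_i$ with $c_i \in \mathbb{Z}_{\ge 0}$ and not all zero, Lemma \ref{lem-rho} gives
\begin{equation*}
    (\rho\spcheck \mid \alpha) \;=\; \sum_{i=1}^r c_i (\rho\spcheck \mid \alpha_i) \;=\; \sum_{i=1}^r c_i \;\ge\; 1 .
\end{equation*}
In particular $(\rho\spcheck \mid \alpha) > 0$ for every $\alpha \in \Phi^+$, hence $(\varepsilon \rho\spcheck \mid \alpha) > 0$ for every $\varepsilon > 0$.

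Next I would handle the upper bound. Since $\Phi^+$ is finite, the quantity $M := \max_{\alpha \in \Phi^+}(\rho\spcheck \mid \alpha)$ is a finite positive real number (attained, for instance, at the highest root $\alpha_0$, since $\alpha_0 - \alpha$ is a non-negative combination of simple roots for any $\alpha \in \Phi^+$, and pairing with $\rho\spcheck$ preserves this inequality by Lemma \ref{lem-rho}). Setting $\delta := 1/M$, for every $\varepsilon$ with $0 < \varepsilon < \delta$ and every $\alpha \in \Phi^+$,
\begin{equation*}
    (\varepsilon \rho\spcheck \mid \alpha) \;=\; \varepsilon (\rho\spcheck \mid \alpha) \;\le\; \varepsilon M \;<\; \delta M \;=\; 1.
\end{equation*}
Combining both bounds yields $\varepsilon \rho\spcheck \in A_+$, completing the proof. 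There is no real obstacle here; the argument is a direct verification once Lemma \ref{lem-rho} is in hand, with the only non-formal input being the finiteness of $\Phi^+$ to extract a uniform $\delta$.
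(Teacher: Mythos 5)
Your proof is correct and is essentially the argument the paper has in mind: the paper offers no written proof and simply labels this a corollary of Lemma \ref{lem-rho}, and your verification — reduce to $0 < \varepsilon(\rho\spcheck\,|\,\alpha) < 1$, get positivity from $(\rho\spcheck\,|\,\alpha) = \sum_i c_i \ge 1$ via Lemma \ref{lem-rho}, and take $\delta$ to be the reciprocal of $(\rho\spcheck\,|\,\alpha_0) = \max_{\alpha \in \Phi^+}(\rho\spcheck\,|\,\alpha)$, which is finite since $\Phi^+$ is finite — is exactly the intended unpacking.
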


The following lemma computes the length of $t_\lambda$ for $\lambda \in \mathbb{Z} \Phi\spcheck \cap \overline{C_+}$.
This is a particular case of~\cite[Proposition 1.23]{IM1965}.

\begin{lemma} \label{lem-length-trans}
     For any $\lambda \in \mathbb{Z} \Phi\spcheck \cap \overline{C_+}$, we have $\ell(t_\lambda) = (2\rho | \lambda)$.
\end{lemma}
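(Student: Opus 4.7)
The plan is to compute $\ell(t_\lambda)$ directly from the alcove characterization of length given in Lemma \ref{lem-alcove-bruhat-order} \ref{lem-alcove-bruhat-order-2}, which tells us that $\ell(t_\lambda)$ equals the number of hyperplanes $H_{\alpha,k}$ with $\alpha \in \Phi^+$ and $k \in \mathbb{Z}$ that separate the fundamental alcove $A_+$ from $A_{t_\lambda} = t_\lambda A_+$. Since the translation $t_\lambda$ preserves the inner product with any fixed $\alpha$, this counting problem decouples across positive roots, and for each $\alpha \in \Phi^+$ it reduces to a one-dimensional counting of integers in a real interval.

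More concretely, I would fix a generic $u \in A_+$, so that $(u|\alpha) \in (0,1)$ for every $\alpha \in \Phi^+$, and consider $t_\lambda u = u + \lambda$. For a single positive root $\alpha$, the hyperplane $H_{\alpha,k}$ separates $A_+$ from $t_\lambda A_+$ precisely when $k$ lies strictly between $(u|\alpha)$ and $(u|\alpha)+(\lambda|\alpha)$. Here I would invoke two facts: first, because $\lambda \in \mathbb{Z}\Phi\spcheck$ and $\alpha \in \Phi$, the Cartan integrality of the pairing gives $(\lambda|\alpha) \in \mathbb{Z}$; second, because $\lambda \in \overline{C_+}$, one has $(\lambda|\alpha) \geq 0$ for every $\alpha \in \Phi^+$. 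With $(u|\alpha) \in (0,1)$ and $(\lambda|\alpha)$ a non-negative integer, the number of integers $k$ in the open interval $\bigl((u|\alpha),\,(u|\alpha)+(\lambda|\alpha)\bigr)$ is exactly $(\lambda|\alpha)$ (namely $k = 1, 2, \dots, (\lambda|\alpha)$, with the convention that the count is $0$ when $(\lambda|\alpha) = 0$).

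Summing over all positive roots then gives
\begin{equation*}
    \ell(t_\lambda) \;=\; \sum_{\alpha \in \Phi^+} (\lambda|\alpha) \;=\; \Bigl(\lambda \Bigm| \sum_{\alpha \in \Phi^+} \alpha\Bigr) \;=\; (\lambda | 2\rho) \;=\; 2(\rho|\lambda),
\end{equation*}
as desired.

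There is no real obstacle in this argument: the only subtlety is ensuring that the chosen $u \in A_+$ is generic enough that no $(u|\alpha)$ is itself an integer, which is automatic from the strict inequalities defining $A_+$, and checking the edge case $(\lambda|\alpha) = 0$ (where the count is vacuously zero, consistent with $A_+$ and $t_\lambda A_+$ lying on the same side of every $H_{\alpha,k}$). The heart of the proof is really the decoupling of the separation count across positive roots, which is a direct consequence of the fact that $t_\lambda$ acts on $(-|\alpha)$ by the integer shift $(\lambda|\alpha)$.
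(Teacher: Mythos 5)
Your proof is correct and complete: the decoupling across positive roots, the Cartan-integrality of $(\lambda|\alpha)$, the dominance giving $(\lambda|\alpha)\ge 0$, and the count of integers in the open interval $\bigl((u|\alpha),\,(u|\alpha)+(\lambda|\alpha)\bigr)$ are all exactly right, and the final sum $\sum_{\alpha\in\Phi^+}(\lambda|\alpha)=2(\rho|\lambda)$ closes the argument. The paper does not give its own proof but instead cites Iwahori--Matsumoto \cite{IM1965}*{Proposition 1.23} (which treats the more general $\ell(t_\lambda w_f)$ for $w_f\in W_f$); your wall-counting argument is the standard one underlying that reference and matches the alcove-geometry toolkit (Lemma~\ref{lem-alcove-bruhat-order}~\ref{lem-alcove-bruhat-order-2}) already set up in the paper.
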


\begin{corollary} \label{cor-length-k-lambda}
    For any $\lambda \in \mathbb{Z} \Phi\spcheck \cap \overline{C_+}$ and $k \in \mathbb{N}$, we have $\ell(t_{k\lambda}) = k \ell(t_{\lambda})$.
\end{corollary}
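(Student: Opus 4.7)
The plan is to apply Lemma \ref{lem-length-trans} twice and use the linearity of the inner product. The key preliminary observation is that $\overline{C_+}$ is a convex cone: it is defined by the homogeneous inequalities $(x|\alpha_i) \ge 0$ for $i = 1, \dots, r$, so if $\lambda \in \mathbb{Z}\Phi\spcheck \cap \overline{C_+}$ and $k \in \mathbb{N}$, then $k\lambda$ also lies in $\mathbb{Z}\Phi\spcheck$ (as the coroot lattice is closed under integer scaling) and in $\overline{C_+}$ (as $(k\lambda|\alpha_i) = k (\lambda|\alpha_i) \ge 0$). Hence Lemma \ref{lem-length-trans} applies to both $\lambda$ and $k\lambda$.

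Having verified this, I would simply compute
\begin{equation*}
    \ell(t_{k\lambda}) = 2(\rho \mid k\lambda) = 2k(\rho \mid \lambda) = k \cdot 2(\rho \mid \lambda) = k \cdot \ell(t_\lambda),
\end{equation*}
where the first and last equalities use Lemma \ref{lem-length-trans} and the middle equalities use the bilinearity of $(-|-)$. There is no real obstacle here; the corollary is essentially a restatement of the fact that $\lambda \mapsto 2(\rho|\lambda)$ is a linear function on the dominant cone, and the scaling $\lambda \mapsto k\lambda$ preserves dominance.
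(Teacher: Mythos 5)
Your proof is correct and matches the paper's intent exactly: the paper states this corollary without proof because it is an immediate consequence of Lemma \ref{lem-length-trans} and the bilinearity of $(-|-)$, and your verification that $k\lambda$ remains in $\mathbb{Z}\Phi\spcheck \cap \overline{C_+}$ is the only (small) point worth spelling out.
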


The following lemma is useful when dealing with $\lambda \in \mathbb{Z} \Phi\spcheck$ on some walls of the dominant Weyl chamber $C_+$.

\begin{lemma} \label{lem-dom-long}
    Suppose $\lambda \in \mathbb{Z} \Phi\spcheck \cap \overline{C_+}$.
    Let 
    \begin{equation*}
        I := \{i \mid 1 \le i \le r, (\lambda| \alpha_i) = 0 \},
    \end{equation*}
    that is, $\lambda$ belongs to the wall $H_{\alpha_i,0}$ if $i \in I$.
    Then, we have:
    \[\text{$t_\lambda s_i > t_\lambda$ if $i \in I$;} \qquad \text{$t_\lambda s_i < t_\lambda$ if $i \in \{1, \dots, r\} \setminus I$.}\]
\end{lemma}
\begin{proof} 
    By Lemma~\ref{lem-alcove-bruhat-order}\ref{lem-alcove-bruhat-order-3}, it suffices to show that $A_+$ and $A_{t_\lambda}$ lie in the same side of $t_\lambda H_{\alpha_i,0}$ if and only if $i \in I$.
    This statement is equivalent to the following:
    \begin{equation} \label{eq-210}
        \text{$A_{t_{- \lambda}}$ and $A_+$ lie in the same side of $H_{\alpha_i,0}$ if and only if $i \in I$.}
    \end{equation}
    Let $\varepsilon \in \mathbb{R}_{> 0}$ be small enough such that $\varepsilon \rho \spcheck \in A_+$ (see Corollary~\ref{cor-rho}).
    For all $i = 1, \dots, r$,  we have $(\rho \spcheck| \alpha_i) = 1$.
    Moreover, $(\lambda|\alpha_i) \ge 1$ if $i \notin I$.
    Thus, $(\varepsilon\rho \spcheck | \alpha_i) = \varepsilon > 0$, and
    \begin{equation*}
        (\varepsilon\rho \spcheck - \lambda| \alpha_i) = \varepsilon - (\lambda| \alpha_i) 
        \begin{cases}
            > 0, \text{ if } i \in I, \\
            < 0, \text{ if } i \notin I.
        \end{cases}
    \end{equation*}
    Since $\varepsilon \rho \spcheck \in A_+$ and $\varepsilon \rho \spcheck - \lambda \in A_{t_{-\lambda}}$, this verifies the statement \eqref{eq-210}. 
\end{proof}

\begin{corollary} \label{cor-strong-dom-long}
    If a coroot lattice point $\lambda$ is strongly dominant, that is, $\lambda \in \mathbb{Z} \Phi\spcheck \cap C_+$, then $t_\lambda$ is the maximal element in the left coset $t_\lambda W_f$.
\end{corollary}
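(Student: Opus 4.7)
The plan is to combine Lemma \ref{lem-dom-long} with the standard parabolic factorization of $W$ along the finite subgroup $W_f$. First, since $\lambda \in \mathbb{Z}\Phi\spcheck \cap C_+$, we have $(\lambda | \alpha_i) > 0$ for every simple root, so the index set $I$ of Lemma \ref{lem-dom-long} is empty. That lemma then yields $t_\lambda s_i < t_\lambda$ for every $i = 1, \dots, r$; equivalently, every simple reflection in $S_f$ is a right descent of $t_\lambda$.

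Next, I would invoke the standard fact that each $x \in W$ admits a unique length-additive decomposition $x = uv$, where $v \in W_f$ and $u$ is the minimum-length representative of the left coset $xW_f$ (characterized by $us > u$ for all $s \in S_f$), and that this decomposition satisfies $\ell(uw) = \ell(u) + \ell(w)$ for every $w \in W_f$. Writing $t_\lambda = uv$ in this way and using length-additivity, we get $\ell(t_\lambda s) = \ell(u) + \ell(vs)$ for each $s \in S_f$, so $t_\lambda s < t_\lambda$ if and only if $vs < v$ inside the finite Coxeter group $W_f$. Combined with the previous paragraph, $v$ has every element of $S_f$ as a right descent. Since the longest element $w_0^f$ is the unique element of a finite Coxeter group with that property, we conclude $v = w_0^f$, and hence $t_\lambda = u w_0^f$ is the maximal-length element of $uW_f = t_\lambda W_f$.

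The argument introduces no new ideas beyond Lemma \ref{lem-dom-long} and elementary Coxeter combinatorics. The only point that requires some bookkeeping is the distinction between right and left cosets: the notation $\prescript{f}{}{W}$ fixed in Subsection \ref{subsec-affine} parametrizes the right cosets $W_f \backslash W$, whereas the coset $t_\lambda W_f$ appearing in the statement is the other kind, so the ``opposite'' parabolic factorization is the relevant one. Once this is kept straight, no substantive obstacle is anticipated.
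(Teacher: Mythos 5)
Your proof is correct and follows the same route as the paper: both apply Lemma \ref{lem-dom-long} with $I = \emptyset$ to conclude that every $s_i \in S_f$ is a right descent of $t_\lambda$. The paper stops there, treating the remaining implication as standard, while you spell out the length-additive parabolic factorization $t_\lambda = u w_0^f$ and the uniqueness of the element of $W_f$ with all of $S_f$ as right descents; this is a careful fill-in of an implicit step, not a different approach.
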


\begin{proof}
    If $\lambda$ is strongly dominant, then the set $I$ in Lemma~\ref{lem-dom-long} is empty.
    Thus $t_\lambda s_i < t_\lambda$ for all $i = 1, \dots, r$.
\end{proof}

\begin{remark}
   Let $\lambda \in \mathbb{Z} \Phi\spcheck \cap C_+$, then $t_\lambda$ is the minimal representative for the \emph{right} coset $W_f t_\lambda$ by Proposition~\ref{prop-dom-alcove} and the maximal representative for the \emph{left} cost $t_\lambda W_f$ by Corollary~\ref{cor-strong-dom-long}.
\end{remark}

\subsection{Measures on \texorpdfstring{$\mathbb{R}$}{R} and \texorpdfstring{$E$}{E}} \label{subsec-measure}
In this subsection, we recollect some terminology and basic results about weak convergence of measures.
For a more complete exposition, we refer to~\cite{billingsley1995measure, billingsley1999measures}.

Let $\mathcal{B}$ be the \emph{Borel $\sigma$-field} in~$\mathbb{R}$ generated by the set of open subsets, that is, the smallest collection of subsets of $\mathbb{R}$ containing all open subsets, which is closed under taking complements, countable unions, and countable intersections.
A set in~$\mathcal{B}$ is called a \emph{Borel set}. In particular, any countable set is a Borel set.
A \emph{measure} $\mathfrak{m}$ on~$(\mathbb{R}, \mathcal{B})$ (or simply, on~$\mathbb{R}$) is a set function 
$\mathfrak{m} \colon \mathcal{B} \to \mathbb{R}_{\ge 0} \cup \{\infty\}$ such that $\mathfrak{m}(\emptyset) = 0$ and $\mathfrak{m} \left( \bigcup_{i = 1}^\infty U_i\right) = \sum_{i=1}^\infty \mathfrak{m} \left(U_i\right)$ for any sequence of disjoint sets $(U_i)_i$.

A measure $\mathfrak{m}$ on~$\mathbb{R}$ is said to be \emph{bounded} if $\mathfrak{m} (\mathbb{R}) < \infty$.
If a closed set $F \subset \mathbb{R}$ is such that $\mathfrak{m}(\mathbb{R} \setminus F) = 0$, we say that $\mathfrak m$ \emph{is supported on} $F$.
\begin{definition}
A sequence of bounded measures $(\mathfrak{m}_k)_k$ is said to \emph{converge weakly} to a measure $\mathfrak{m}$ if
\begin{equation*}
\lim_{k \to \infty} \int f \, \mathrm{d}\mathfrak{m}_k = \int f \, \mathrm{d}\mathfrak{m}
\end{equation*}
for any bounded continuous real function $f$. 
\end{definition}

\begin{lemma} [{\cite[Sect.\@~25]{billingsley1995measure}, \cite[Example 2.3]{billingsley1999measures}}] \label{lem-weak-convergence}
    A sequence of bounded measures $(\mathfrak{m}_k)_k$ on~$\mathbb{R}$ converges weakly to a measure $\mathfrak{m}$ if and only if  
    \begin{equation*}
        \lim_{k \to \infty} \mathfrak{m}_k \bigl((-\infty, z]\bigr) = \mathfrak{m} \bigl((-\infty, z]\bigr) \text{ for any $z \in \mathbb{R}$ such that $\mathfrak{m} \left(\{z\}\right) = 0$.} 
    \end{equation*}
\end{lemma}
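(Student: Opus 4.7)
The plan is to prove the two implications separately, using the standard sandwich-by-continuous-functions argument in the forward direction and a step-function Riemann-type approximation in the reverse direction.

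For the forward direction, assume $\mathfrak{m}_k$ converges weakly to $\mathfrak{m}$ and fix $z \in \mathbb{R}$ with $\mathfrak{m}(\{z\}) = 0$. For each $\varepsilon > 0$, I would construct bounded continuous functions $f_\varepsilon^\pm$ satisfying $f_\varepsilon^- \le \mathbf{1}_{(-\infty, z]} \le f_\varepsilon^+$: take $f_\varepsilon^+$ equal to $1$ on $(-\infty, z]$, equal to $0$ on $[z+\varepsilon, \infty)$, and linear between, and define $f_\varepsilon^-$ analogously shifted by $\varepsilon$ to the left. Weak convergence then yields
\begin{equation*}
\int f_\varepsilon^- \, d\mathfrak{m} \le \liminf_k \mathfrak{m}_k\bigl((-\infty, z]\bigr) \le \limsup_k \mathfrak{m}_k\bigl((-\infty, z]\bigr) \le \int f_\varepsilon^+ \, d\mathfrak{m}.
\end{equation*}
Sending $\varepsilon \to 0$ and applying dominated convergence, the extreme sides collapse to $\mathfrak{m}((-\infty, z))$ and $\mathfrak{m}((-\infty, z])$ respectively, which coincide thanks to $\mathfrak{m}(\{z\}) = 0$.

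For the reverse direction, I would first recall that any bounded measure has at most countably many atoms, so its continuity points are dense in $\mathbb{R}$. Given a bounded continuous $f$ and $\varepsilon > 0$, select continuity points $A < B$ so that $\mathfrak{m}((-\infty, A])$ and $\mathfrak{m}((B, \infty))$ are each below $\varepsilon$, and a partition $A = a_0 < a_1 < \dots < a_n = B$ through continuity points over which $f$ oscillates by at most $\varepsilon$ on each subinterval (possible by uniform continuity of $f$ on $[A,B]$ and density of continuity points). The step function $g := \sum_j f(a_{j-1}) \mathbf{1}_{(a_{j-1}, a_j]}$ then satisfies $|f - g| \le \varepsilon$ on $[A,B]$. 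The hypothesis gives $\mathfrak{m}_k((a_{j-1}, a_j]) \to \mathfrak{m}((a_{j-1}, a_j])$ for each $j$, hence $\int g\, d\mathfrak{m}_k \to \int g\, d\mathfrak{m}$. Combining this with the pointwise estimate $|f - g| \le \varepsilon$ on $[A,B]$ and tail smallness yields $\int f\, d\mathfrak{m}_k \to \int f\, d\mathfrak{m}$.

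The main obstacle will be tail control: a priori, the cdf-convergence hypothesis does not preclude mass of $\mathfrak{m}_k$ escaping to infinity. To handle this in full generality one derives $\mathfrak{m}_k(\mathbb{R}) \to \mathfrak{m}(\mathbb{R})$ by letting $z \to \pm\infty$ through continuity points of $\mathfrak{m}$, which then makes $\mathfrak{m}_k((-\infty, A])$ and $\mathfrak{m}_k((B, \infty))$ small uniformly in $k$ once $A$ and $B$ are chosen as above. In the application relevant to Theorem \ref{main}, however, this technicality disappears completely, since all $\mathfrak{m}_k$ and $\mathfrak{m}$ are supported on the fixed compact interval $[0, \ell(t_\lambda)]$ — one has $b_i^{t_{k\lambda}, f} = 0$ for $i > \ell(t_{k\lambda}) = k \ell(t_\lambda)$ by Corollary \ref{cor-length-k-lambda} — so one may simply take $[A, B]$ containing this interval, and all tails vanish identically.
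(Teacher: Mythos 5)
Your forward direction is fine: the sandwich with piecewise-linear $f_\varepsilon^\pm$ together with $\mathfrak{m}(\{z\})=0$ pins $\lim_k \mathfrak{m}_k((-\infty,z])$ down to $\mathfrak{m}((-\infty,z])$.

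In the reverse direction there is a genuine gap in your tail-control step. You claim that letting $z \to \pm\infty$ through continuity points of $\mathfrak{m}$ yields $\mathfrak{m}_k(\mathbb{R}) \to \mathfrak{m}(\mathbb{R})$, but the hypothesis only controls $\lim_{k}\mathfrak{m}_k((-\infty,z])$ for each \emph{fixed} $z$, so letting $z \to \infty$ afterward produces the iterated limit $\lim_{z}\lim_{k}\mathfrak{m}_k((-\infty,z]) = \mathfrak{m}(\mathbb{R})$, which is \emph{not} the same as $\lim_{k}\mathfrak{m}_k(\mathbb{R}) = \lim_{k}\lim_{z}\mathfrak{m}_k((-\infty,z])$; the two limits may not be interchanged. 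Concretely, with $\mathfrak{m}_k = \delta_k$ and $\mathfrak{m}$ the zero measure, the distribution-function condition holds at every $z$ (one has $\mathfrak{m}_k((-\infty,z]) = 0$ once $k > z$), yet $\mathfrak{m}_k(\mathbb{R}) = 1 \not\to 0 = \mathfrak{m}(\mathbb{R})$, and indeed $\delta_k$ does not converge weakly to $0$ under the paper's definition (test against $f \equiv 1$). So the ``if'' direction of the lemma is actually false in the stated generality; one needs an extra hypothesis such as $\mathfrak{m}_k(\mathbb{R}) \to \mathfrak{m}(\mathbb{R})$ or uniform tightness, neither of which follows from cdf-convergence alone. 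Your final remark is exactly the right fix for the paper's purposes: all measures in play are supported on the fixed compact interval $[0,\ell(t_\lambda)]$, so the mass-escape pathology cannot occur and your step-function argument closes without the faulty limit swap. (Note that the paper does not give its own proof of this lemma; it cites Billingsley, whose Section 25 and Example 2.3 are stated for probability measures, where total mass is automatically $1$.)
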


\begin{remark} \label{rmk-strong-convergence}
    There is another notion of convergence of measures.
    A sequence of measures $(\mathfrak{m}_k)_k$ is said to \emph{converge strongly} (or \emph{setwise}) to a measure $\mathfrak{m}$ if $\lim_{k \to \infty} \mathfrak{m}_k (U) = \mathfrak{m} (U)$ for any Borel set $U$.
\end{remark}

On the $r$-dimensional Euclidean space $E$ where the root system $\Phi$ lives, we have the Lebesgue measure induced by the inner product $(-|-)$.
This measure is denoted by $\operatorname{Vol}_r$. 
Moreover, for an $i$-dimensional affine subspace $M$ in~$E$ ($i = 0, 1, \dots, r-1$), we can talk about the $i$-dimensional Lebesgue measure $\operatorname{Vol}_i$ on~$M$ which is also induced by $(-|-)$.
For example, $\operatorname{Vol}_1(v) = \sqrt{(v|v)}$ is the length of a vector $v$, also denoted by $\lVert v \rVert$.

Let $D$ be the open parallelotope spanned by the simple coroots, that is, 
\begin{equation*}
D := \left\{\sum_{i=1}^r a_i \alpha_i\spcheck \in E \xmiddle| 0 < a_i < 1 \text{ for all } i \right\}.
\end{equation*}
The following lemma relates the volumes of $D$ and $A_+$.

\begin{lemma} \label{lem-vol-D}
    $\operatorname{Vol}_r (D) = \lvert W_f \rvert \cdot \operatorname{Vol}_r (A_+)$.
\end{lemma}

\begin{proof}
    We have the following facts:
    \begin{enumerate}
        \item $W$ acts continuously and properly on~$E$, and $\operatorname{Vol}_r$ is a $W$-invariant measure;
        \item $\mathbb{Z} \Phi\spcheck$ is a subgroup of $W$; 
        \item $A_+$ and $D$ are both open subsets of $E$ with finite non-zero measure;
        \item the unions $E^\prime := \bigsqcup_{w \in W} w A_+$ and $E'' := \bigsqcup_{\lambda \in \mathbb{Z} \Phi\spcheck} t_\lambda D$ are both disjoint;
        \item $\operatorname{Vol}_r (E \setminus E^\prime) = \operatorname{Vol}_r (E \setminus E'') = 0$.
    \end{enumerate}
    Applying~\cite[Ch.\@~VI, Sect.\@~2, No.\@~4, Lemma 1]{bourbaki1968} to the above facts, we obtain 
    \[\lvert W_f \rvert = (W:\mathbb{Z} \Phi\spcheck) = \frac{\operatorname{Vol}_r(D)}{\operatorname{Vol}_r(A_+)}\]
    as desired.
\end{proof}

\subsection{Polytopes}

For references on polytopes, one may refer to~\cite{grunbaum03,Ziegler95}.
A (\emph{convex}) \emph{polytope} $P$ in the Euclidean space $E$ is the convex hull of a finite set of points in~$E$.
Note that a polytope is a bounded closed set. Equivalently, a polytope in~$E$ is a bounded subset of $E$, which can be written in the form
\[P = \bigcap_{i = 1, \dots, k} \{x \in E \mid f_i(x) \ge a_i\},\]
where each $f_i$ is a linear function on~$E$ and $a_i \in \mathbb{R}$. We define an \emph{open face} of $P$ to be an equivalence class of the equivalence relation $\sim$ on~$P$ defined by $x \sim y$ if for each $i = 1, \dots, k$, we have $f_i(x) = a_i$ if and only if $f_i(y) = a_i$.
The closure of an open face is called a \emph{face}.
Clearly, $P$ is a disjoint union of finitely many open faces.

For a subset $M$ of $E$, we denote by $\langle M \rangle_\text{aff}$ the affine subspace of $E$ spanned by $M$, that is, the smallest affine subspace containing $M$. 
For an open face $F^\circ$ of $P$ and the corresponding face $F = \overline{F^\circ}$, $F^\circ$ is the relative interior of $F$ and it is an open subset of $\langle F^\circ \rangle_\text{aff}$ (hence we call it an ``open face''). 
The \emph{dimension} of $F^\circ$ and $F$ is defined to be the dimension of $\langle F^\circ \rangle_\text{aff}$. 
In particular, the $0$-dimensional open faces of $P$ are the same as the $0$-dimensional faces of $P$, that is, the vertices of $P$.
The \emph{dimension} of $P$ is the maximal dimension among its faces.
We list some elementary facts without proof.

\begin{lemma} \label{lem-intersect-face}
    Let $P$ be a polytope in~$E$, and $H \subseteq E$ be an affine hyperplane.
    \begin{enumerate}
        \item \label{lem-intersect-face-1} If $P \cap H$ is not empty, then it is a polytope.
        \item If $F^\circ$ is an open face of $P$ and $F^\circ \cap H \ne \emptyset$, then $F^\circ \cap H$ is an open face of $P \cap H$.
        \item Any open face of $P \cap H$ is of the form $F^\circ \cap H$ for some open face $F^\circ$ of $P$.
    \end{enumerate}
\end{lemma}

\section{The partial order \texorpdfstring{$\preceq$}{prec} and the Bruhat--Chevalley order} \label{sec-order}

Besides the Bruhat--Chevalley order $\le$ on~$W$, there are two other related partial orders. One is defined on~$\mathbb{Z} \Phi\spcheck$ and the other on~$W$. 

\begin{definition} \label{def-order} \leavevmode
    \begin{enumerate}
        \item \label{def-order-1} For $\lambda, \mu \in E$, we write $\mu \preceq \lambda$ if $\lambda - \mu$ is a non-negative  linear combination of simple coroots $\{\alpha_1\spcheck, \dots, \alpha_r\spcheck\}$ (as well as simple roots).
        This gives a partial order on~$E$.
        For simplicity, we write $\mu \prec \lambda$ if $\mu \preceq \lambda$ and $\mu \ne \lambda$.
        (Note that if $\lambda, \mu \in \mathbb{Z} \Phi\spcheck$ and $\mu \preceq \lambda$, then $\lambda - \mu$ is a non-negative \emph{integral} linear combination of simple coroots.)
        \item \label{def-order-2} For $x, y \in W$, we write $y \le_a x$ if there exists a sequence of elements $(y = y_0, y_1, \dots, y_{n-1}, y_n = x)$ in~$W$ and a sequence of affine reflections 
        \begin{equation*}
            \{s_{\beta_i, k_i}\mid  \beta_i \in \Phi^+, k_i \in \mathbb{Z}, i = 1, \dots, n \},
        \end{equation*}
        such that for each $i = 1,\dots, n$, we have $s_{\beta_i, k_i} y_i = y_{i-1}$ and $(u_i|\beta_i) > k_i$ for some (and hence for all) $u_i \in A_{y_i}$.
        Following Verma~\cite{verma1975} and Wang~\cite{wang1987order}, we call the partial order $\le_a$ on~$W$ the \emph{affine order}.
    \end{enumerate}
\end{definition}

In Definition~\ref{def-order}\ref{def-order-2}, we have $u_i \in A_{y_i}$, $s_{\beta_i, k_i} (u_i) \in A_{y_{i-1}}$, and
\begin{equation*}
    u_i - s_{\beta_i, k_i} (u_i) = u_i - \Bigl( u_i + \bigl( k_i - (u_i | \beta_i) \bigr)  \beta_i \spcheck \Bigr) = \bigl( (u_i | \beta_i) - k_i  \bigr)  \beta_i \spcheck
\end{equation*}
which is a positive multiple of $\beta_i\spcheck$.
Inductively, we have:

\begin{corollary} \label{cor-affine-order}
    If $x, y \in W$ and $y \le _a x$, then $u - yx^{-1} (u)$ is an $\mathbb{R}_{\ge 0}$-linear combination of simple coroots for any $u \in A_x$ (and hence $yx^{-1} (u) \in A_y$).
\end{corollary}

We have the following relation between the affine order $\le_a$ on~$W$ and the partial order $\preceq$ on~$\mathbb{Z} \Phi\spcheck$, which says that $\le_a$ is an extension of $\preceq$.
\begin{lemma} \label{lem-lattice-affine-dominance-order}
    For two lattice points $\mu, \lambda \in \mathbb{Z} \Phi\spcheck$, $t_\mu \le _a t_\lambda$ if and only if $\mu \preceq \lambda$.
\end{lemma}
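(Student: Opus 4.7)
The plan is to treat the two directions separately. For the ``only if'' direction, I would apply Remark \ref{rmk-affine-order} directly. Indeed, if $t_\mu \le_a t_\lambda$, then for any $u \in A_{t_\lambda}$ the difference
\[
u - (t_\mu t_\lambda^{-1})(u) = u - t_{\mu - \lambda}(u) = \lambda - \mu
\]
must be an $\mathbb{R}_{\ge 0}$-linear combination of the simple coroots $\alpha_1\spcheck, \dots, \alpha_r\spcheck$. Since $\lambda - \mu \in \mathbb{Z}\Phi\spcheck$ and the simple coroots form a $\mathbb{Z}$-basis of the coroot lattice, the coefficients in this expansion are forced to lie in $\mathbb{Z}_{\ge 0}$, giving $\mu \preceq \lambda$.

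For the ``if'' direction, write $\lambda - \mu = \sum_{i=1}^{r} c_i \alpha_i\spcheck$ with $c_i \in \mathbb{Z}_{\ge 0}$. Transitivity of $\le_a$ is clear from its definition (concatenate chains), so by induction on $\sum_i c_i$ it suffices to establish the following unit descent: for every $\lambda^\prime \in \mathbb{Z}\Phi\spcheck$ and every simple coroot $\alpha_i\spcheck$, we have $t_{\lambda^\prime - \alpha_i\spcheck} \le_a t_{\lambda^\prime}$.

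To verify this, I would exhibit a chain of length two. Set $k_2 := (\lambda^\prime | \alpha_i) \in \mathbb{Z}$ and $k_1 := k_2 - 1$, and define
\[
y_2 := t_{\lambda^\prime}, \qquad y_1 := s_{\alpha_i, k_2}\, t_{\lambda^\prime}, \qquad y_0 := s_{\alpha_i, k_1}\, y_1.
\]
A direct computation from \eqref{eq-def-refl} yields the classical parallel-reflection identity $s_{\alpha_i, k_1} s_{\alpha_i, k_2} = t_{(k_1 - k_2)\alpha_i\spcheck} = t_{-\alpha_i\spcheck}$, so $y_0 = t_{\lambda^\prime - \alpha_i\spcheck}$, matching the required endpoints. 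For the side conditions in Definition \ref{def-order} \ref{def-order-2}, observe that $A_{t_{\lambda^\prime}} = \lambda^\prime + A_+$, so any $u_2 \in A_{t_{\lambda^\prime}}$ satisfies $(u_2 | \alpha_i) \in (k_2, k_2 + 1)$ by the defining inequalities of $A_+$; in particular $(u_2 | \alpha_i) > k_2$. Reflecting produces $u_1 := s_{\alpha_i, k_2}(u_2) \in A_{y_1}$ with $(u_1 | \alpha_i) = 2k_2 - (u_2 | \alpha_i) \in (k_1, k_1 + 1)$, so $(u_1 | \alpha_i) > k_1$ as required.

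I do not anticipate any substantive obstacle beyond these calculations. The ``only if'' direction is essentially packaged by Remark \ref{rmk-affine-order}, and the ``if'' direction reduces by transitivity and induction to the single-step picture in which translation by a negative simple coroot $-\alpha_i\spcheck$ is realized as the composition of reflections across two consecutive parallel hyperplanes on the ``small'' side of $A_{t_{\lambda^\prime}}$.
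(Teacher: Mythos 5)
Your proof is correct and takes essentially the same approach as the paper: the forward direction is an immediate application of Remark \ref{rmk-affine-order}, and the converse reduces by induction to a unit simple-coroot step realized by a length-two chain of reflections along the two consecutive parallel hyperplanes $H_{\alpha_i,k_2}$ and $H_{\alpha_i,k_2-1}$, exactly as in the paper. The only cosmetic difference is that you derive the parallel-reflection identity $s_{\alpha_i,k_1}s_{\alpha_i,k_2}=t_{-\alpha_i\spcheck}$ abstractly and verify the side conditions on a generic $u_2\in A_{t_{\lambda'}}$, whereas the paper does both on the specific test point $\lambda'+\varepsilon\rho\spcheck$.
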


\begin{proof}
  Suppose $t_\mu \le_a t_\lambda$.
  For $u \in A_{t_\lambda}$, we have $t_\mu t_\lambda^{-1} (u) = u - \lambda + \mu \in A_{t_\mu}$.
  By Corollary~\ref{cor-affine-order}, $u - t_\mu t_\lambda^{-1} (u) = \lambda - \mu$ is an $\mathbb{R}_{\ge 0}$-linear combination of simple coroots.
  Therefore, we have $\mu \preceq \lambda$.
  
  Conversely, suppose $\mu \preceq \lambda$. By induction, 
  we may assume $\lambda = \mu + \alpha\spcheck$ for some simple coroot $\alpha\spcheck$.
  Let $k = (\lambda | \alpha) \in \mathbb{Z}$.
  From the formula of affine reflections in Equation  \eqref{eq-def-refl}, one may easily verify that
  $s_{\alpha, k-1} s_{\alpha,k} = t_{-\alpha\spcheck}$.
  Therefore, \[s_{\alpha, k-1} s_{\alpha,k} t_\lambda = t_\mu.\]
  We choose $\varepsilon \in \mathbb{R}_{>0}$ small enough so that $\varepsilon \rho\spcheck \in  A_+$ as in Corollary~\ref{cor-rho}.
  Then $\lambda +\varepsilon\rho\spcheck \in A_{t_\lambda}$.
  Moreover, 
  \[(\lambda + \varepsilon \rho\spcheck | \alpha)  = k + \varepsilon > k.\]
  This implies $s_{\alpha,k} t_\lambda \le_a t_\lambda$ by Definition~\ref{def-order}\ref{def-order-2}.
  Similarly, $s_{\alpha,k} (\lambda + \varepsilon \rho\spcheck) \in A_{s_{\alpha,k} t_\lambda}$, and 
  \begin{align*}
      \bigl( s_{\alpha,k} (\lambda + \varepsilon \rho\spcheck) \big| \alpha \bigr) & = \bigl(\lambda + \varepsilon\rho\spcheck + \left(k-(\lambda +\varepsilon\rho\spcheck| \alpha)\right)\alpha\spcheck \big| \alpha \bigr) \\
      & = (\lambda + \varepsilon \rho\spcheck | \alpha) + 2 (k-(\lambda +\varepsilon\rho\spcheck| \alpha)) \\
      & =  k - \varepsilon \\
      & > k-1.
  \end{align*}
  This gives $t_\mu = s_{\alpha, k-1} s_{\alpha,k} t_\lambda \le_a s_{\alpha,k} t_\lambda \le_a t_\lambda$. 
\end{proof}

The following lemma states that the Bruhat--Chevalley order and the affine order coincide for elements in~$\prescript{f}{}{W}$. 

\begin{lemma} [{\cite[Sect.\@~1.6]{verma1975}}] \label{lem-dom-order-eq-affine}
    For $x, y \in \prescript{f}{}{W}$, $y \le x$ if and only if $y \le_a x$.
\end{lemma}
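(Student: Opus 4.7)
The plan is to prove both implications separately using alcove geometry.

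For the forward direction $y \le x \Rightarrow y \le_a x$, I would start from a Bruhat chain $y = z_0 < z_1 < \cdots < z_n = x$ with every $z_i \in \prescript{f}{}{W}$ and $\ell(z_i) = \ell(z_{i-1}) + 1$; such a chain exists by the classical chain property for parabolic Bruhat orders. Writing each step as a reflection $z_{i-1} = s_{\beta_i, k_i} z_i$ with $\beta_i \in \Phi^+$, the key geometric remark is that $k_i \ge 1$: indeed, $C_+$ lies in the open halfspace $\{x \in E \mid (x|\beta_i) > 0\}$, so any hyperplane $H_{\beta_i, k}$ with $k \le 0$ fails to separate two alcoves contained in $C_+$. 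Given $k_i \ge 1$, Lemma \ref{lem-alcove-bruhat-order}\ref{lem-alcove-bruhat-order-1} applied to $z_{i-1} < z_i$ places $A_+$ and $A_{z_{i-1}}$ on the common side $(u|\beta_i) < k_i$ (since $A_+ \subset \{(u|\beta_i) < 1 \le k_i\}$); hence $A_{z_i}$ lies on the side $(u|\beta_i) > k_i$, which is precisely the affine-order condition. Concatenating the steps yields $y \le_a x$.

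For the backward direction $y \le_a x \Rightarrow y \le x$, the subtlety is that an affine-order chain $y = y_0, y_1, \ldots, y_n = x$ need not stay inside $\prescript{f}{}{W}$. My plan is to project each intermediate $y_i$ to its minimal-length right coset representative $y_i^f$ of $W_f y_i$, and to show that $y_{i-1}^f \le y_i^f$ in Bruhat at each step. Since $y_0^f = y$ and $y_n^f = x$, this gives $y \le x$. The step decomposes into cases: if both $y_i, y_{i-1} \in \prescript{f}{}{W}$, then as in the forward direction one is forced to have $k_i \ge 1$, and Lemma \ref{lem-alcove-bruhat-order}\ref{lem-alcove-bruhat-order-1} directly gives $y_{i-1} < y_i$; if at least one of them leaves $\prescript{f}{}{W}$, one controls the passage using the semidirect product decomposition $W = \mathbb{Z}\Phi\spcheck \rtimes W_f$ together with the explicit formula \eqref{eq-def-refl} for affine reflections.

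The main obstacle is precisely the second case of the backward direction: the affine order is genuinely weaker than the Bruhat order on the full group $W$ (it only requires $\beta_i \in \Phi^+$ with no sign constraint on $k_i$), so the projection argument requires care to verify that crossing out of $\prescript{f}{}{W}$ and back in cannot increase the projected coset representative in Bruhat order. A clean alternative would be to invoke the classical results of Verma and Wang referenced in Remark \ref{rmk-bruhat-affine-order-dom}, which directly identify the Bruhat order and the affine order on the parabolic quotient $\prescript{f}{}{W}$, thus bypassing the need for an explicit projection argument.
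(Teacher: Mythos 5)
The paper does not prove this lemma itself; it states it and refers to Verma and Wang via Remark \ref{rmk-bruhat-affine-order-dom}, so your fallback to those references is in fact the paper's own route. As for your sketched direct proof: the forward direction is correct. The chain property for $\prescript{f}{}{W}$ is classical, and your key observation --- $C_+ \subset \{(x|\beta) > 0\}$ for every $\beta \in \Phi^+$, so a hyperplane $H_{\beta_i,k_i}$ separating two dominant alcoves must have $k_i \ge 1$ --- correctly turns each Bruhat cover in $\prescript{f}{}{W}$ into a single step of $\le_a$ via Lemma \ref{lem-alcove-bruhat-order}\ref{lem-alcove-bruhat-order-1}.

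The backward direction has a genuine gap, exactly where you flag it. If some $k_i \le 0$ in the affine chain, then $A_+$ and $A_{y_i}$ both lie in $\{(x|\beta_i) > k_i\}$, so Lemma \ref{lem-alcove-bruhat-order}\ref{lem-alcove-bruhat-order-1} gives $y_{i-1} = s_{\beta_i,k_i} y_i > y_i$ in Bruhat order; the chain in $W$ moves the wrong way at that step and can zigzag in length. Your projection $y_i \mapsto y_i^f$ does preserve Bruhat order, but that only transports the good steps where $y_{i-1} < y_i$; it gives you nothing on a step where $y_{i-1} > y_i$, which is precisely the case that matters. Appealing to the semidirect product decomposition and the reflection formula \eqref{eq-def-refl} is a promise rather than an argument: that is exactly the content Verma deferred to his unpublished preprint (the ``strengthened exchange property'') and that Wang's article is devoted to supplying. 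As a self-contained proof, the implication ``$y \le_a x \Rightarrow y \le x$'' is therefore incomplete; either supply that nontrivial argument or, as the paper does, simply cite Verma and Wang.
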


\begin{remark} \label{rmk-bruhat-affine-order-dom}
    The assertion in Lemma~\ref{lem-dom-order-eq-affine} was firstly stated and partially proved by Verma~\cite[Sect.\@~1.6]{verma1975}. 
    However, Verma skipped the key point (whose proof is somewhat involved) in his incomplete proof and referred it to his unpublished preprint~\cite{verma1972preprint}.
    A complete proof can be found in Wang's paper~\cite{wang1987order}, whose Chinese original appeared 12 years later than Verma's paper.
    According to Wang (see the introductory section and the ``Notes by the author'' in~\cite{wang1987order}), and to the best of the authors' knowledge, there was no complete proof of Lemma~\ref{lem-dom-order-eq-affine} available in the literature before his paper.
\end{remark}

We need the following lemma.

\begin{lemma} [{\cite[Proposition 8.44]{hall2015gtm222}}] \label{lem-dom-convhull}
    Suppose $\lambda, \mu \in \overline{C_+}$.
    Then the following are equivalent:
    \begin{enumerate}
        \item \label{lem-dom-convhull-1} $\mu \preceq \lambda$.
        \item \label{lem-dom-convhull-2} $\mu \in \operatorname{Conv}\{w \lambda \mid w \in W_f\}$, the convex hull of the finite set $W_f \lambda$ in~$E$. 
    \end{enumerate}
\end{lemma}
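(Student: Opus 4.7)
My plan is to establish the lemma in three steps: one auxiliary fact about Weyl group orbits and dominance, followed by each direction of the equivalence.

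The first step is the classical preliminary observation that for any dominant $\nu \in \overline{C_+}$ and any $w \in W_f$, one has $w\nu \preceq \nu$; equivalently, a dominant element is the $\preceq$-maximum of its $W_f$-orbit. I would argue by induction on $\ell(w)$. Factoring $w = s_i w'$ with $\ell(w') = \ell(w)-1$, the difference $\nu - w\nu$ splits as $(\nu - w'\nu)$ plus a multiple of $\alpha_i$ with coefficient $(w'\nu \mathrel{|} \alpha_i\spcheck)$. The first summand is handled by the inductive hypothesis. The sign of the second coefficient is controlled by the standard fact that $\ell(s_i w') > \ell(w')$ is equivalent to $(w')^{-1}\alpha_i \in \Phi^+$, which combined with the dominance of $\nu$ gives the non-negativity.

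The direction \textit{(2)} $\Rightarrow$ \textit{(1)} is then almost immediate. Expanding $\mu$ as a convex combination of the points $w\lambda$ with $w \in W_f$ and applying Step 1 to each $\lambda - w\lambda$, the difference $\lambda - \mu$ becomes a non-negative linear combination of non-negative combinations of simple roots, hence $\mu \preceq \lambda$.

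For the harder direction \textit{(1)} $\Rightarrow$ \textit{(2)}, I would argue by contradiction. If $\mu \notin \operatorname{conv}(W_f \lambda)$, the convex hull is compact (finitely generated), so the separating hyperplane theorem supplies a vector $v \in E$ and a scalar $c$ with $(v\mathrel{|}\mu) > c \ge (v\mathrel{|}\nu)$ for all $\nu \in \operatorname{conv}(W_f \lambda)$. The plan is to replace $v$ by its dominant $W_f$-conjugate $v^+ = w_0 v \in \overline{C_+}$ (which exists since $\overline{C_+}$ is a fundamental domain) and derive a contradiction. The $W_f$-invariance of $\operatorname{conv}(W_f\lambda)$ transfers the upper bound directly to $v^+$. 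On the other side, the identity $(v\mathrel{|}\mu) = (v^+ \mathrel{|} w_0\mu)$ combined with Step 1 applied to $\mu$ (giving $w_0\mu \preceq \mu$) and the dominance of $v^+$ shows that the value at $\mu$ can only increase under the substitution. Specializing the upper bound to $\nu = \lambda$ then yields $(v^+\mathrel{|}\mu) > (v^+\mathrel{|}\lambda)$, which contradicts $\mu \preceq \lambda$ because dominance of $v^+$ forces $(v^+\mathrel{|}\lambda - \mu) \ge 0$.

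The main obstacle I anticipate is the bookkeeping in the last step: one must verify that the separating functional behaves correctly on \emph{both} sides of the inequality when passing from an arbitrary $v$ to its dominant conjugate $v^+$. The convex-hull side is handled by $W_f$-invariance, but the point-$\mu$ side genuinely requires Step 1 applied to $\mu$ itself. Once this is set up cleanly, the final contradiction is just a direct pairing with $\lambda - \mu$.
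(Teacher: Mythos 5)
Your proposal is correct, but note that the paper does not actually prove this lemma: it is quoted verbatim from Hall's textbook (Proposition 8.44) and used as a black box, so there is no in-paper proof to compare against. Your argument is the standard one. Step 1 (the $W_f$-orbit of a dominant vector lies $\preceq$-below it) is proved correctly by induction on length, and your use of $\ell(s_iw')>\ell(w')\Leftrightarrow (w')^{-1}\alpha_i\in\Phi^+$ is exactly right. The direction (2) $\Rightarrow$ (1) is immediate from Step 1 by convexity, as you say. For (1) $\Rightarrow$ (2), the separating-hyperplane argument is correctly set up, and the replacement of the separating functional $v$ by its dominant conjugate $v^+$ is handled carefully: the bound over the $W_f$-invariant hull transfers by invariance, while the bound at $\mu$ increases by Step 1 applied to $\mu$ and dominance of $v^+$; evaluating at $\lambda$ and pairing $v^+$ with $\lambda-\mu$ then gives the contradiction. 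One cosmetic remark: you phrase the difference $w'\nu - s_iw'\nu$ as $(w'\nu\mathrel{|}\alpha_i\spcheck)\alpha_i$ (a multiple of the root), whereas the paper's order $\preceq$ is defined via non-negative combinations of simple coroots. Since $\alpha_i\spcheck$ is a positive scalar multiple of $\alpha_i$, the two non-negative cones coincide (as the paper's parenthetical "(as well as simple roots)" indicates), so this does not affect the validity of the argument, but writing $w'\nu - s_iw'\nu = (w'\nu\mathrel{|}\alpha_i)\alpha_i\spcheck$ would match the paper's convention more directly.
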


By combining the lemmas above, we have the following important result comparing the partial orders $\le$ on~$\prescript{f}{}{W}$ and $\preceq$ on~$\mathbb{Z} \Phi\spcheck$.
This will allow us to translate questions involving the Bruhat--Chevalley order on~$\prescript{f}{}{W}$ into questions involving convex geometry.

\begin{theorem} \label{lem-lattice-bruhat}
    Suppose $\lambda, \mu \in \mathbb{Z} \Phi\spcheck \cap \overline{C_+}$.
        The following are equivalent:
        \begin{enumerate} 
            \item \label{lem-lattice-bruhat-1} $t_\mu \le t_\lambda$ in the Bruhat--Chevalley order.
            \item \label{lem-lattice-bruhat-a} $t_\mu \le_a t_\lambda$ in the affine order.
            \item \label{lem-lattice-bruhat-2} $\mu \preceq \lambda$.
            \item \label{lem-lattice-bruhat-3} $\mu \in \operatorname{Conv}\{w \lambda \mid w \in W_f\}$. 
        \end{enumerate}
\end{theorem}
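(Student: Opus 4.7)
The statement is essentially the assembly of four preceding results, so my plan is to prove the four equivalences in a small cycle rather than attack a new result head-on.

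First I would observe that, since $\lambda, \mu \in \mathbb{Z}\Phi\spcheck \cap \overline{C_+}$, Proposition \ref{prop-dom-alcove} gives $t_\lambda, t_\mu \in \prescript{f}{}{W}$, so both translations have their alcoves in the dominant chamber. This is the hypothesis needed to invoke Lemma \ref{lem-dom-order-eq-affine}, which yields the equivalence \ref{lem-lattice-bruhat-1} $\iff$ \ref{lem-lattice-bruhat-a} of the Bruhat--Chevalley order with the affine order on $\prescript{f}{}{W}$.

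Next, the equivalence \ref{lem-lattice-bruhat-a} $\iff$ \ref{lem-lattice-bruhat-2} is exactly the content of Lemma \ref{lem-lattice-affine-dominance-order}, which requires no dominance assumption: affine-order comparability of $t_\mu$ and $t_\lambda$ matches $\preceq$-comparability of $\mu$ and $\lambda$ in the full lattice $\mathbb{Z}\Phi\spcheck$. Finally, since $\lambda, \mu \in \overline{C_+}$, Lemma \ref{lem-dom-convhull} gives \ref{lem-lattice-bruhat-2} $\iff$ \ref{lem-lattice-bruhat-3}, translating the lattice order into membership in the convex hull $\operatorname{conv}(W_f \lambda)$.

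So the whole proof reduces to checking that the hypotheses of each of Lemmas \ref{lem-dom-order-eq-affine}, \ref{lem-lattice-affine-dominance-order}, and \ref{lem-dom-convhull} are satisfied, then chaining the resulting equivalences. No new argument is needed here; the work has already been done in Section \ref{sec-order} and in the preliminaries. There is no serious obstacle: the ``hard part'' was really concentrated in Lemma \ref{lem-dom-order-eq-affine}, where the Bruhat--Chevalley and affine orders are identified on $\prescript{f}{}{W}$ (a nontrivial result attributed to Verma and Wang, as noted in Remark \ref{rmk-bruhat-affine-order-dom}), and in Lemma \ref{lem-lattice-affine-dominance-order}, where the explicit two-reflection decomposition $s_{\alpha, k-1} s_{\alpha, k} t_\lambda = t_\mu$ (for $\lambda - \mu = \alpha\spcheck$) furnishes the induction step. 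With those two lemmas in hand, Theorem \ref{lem-lattice-bruhat} is a clean corollary and should be written out simply as the chain $\ref{lem-lattice-bruhat-1} \iff \ref{lem-lattice-bruhat-a} \iff \ref{lem-lattice-bruhat-2} \iff \ref{lem-lattice-bruhat-3}$.
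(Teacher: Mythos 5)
Your proposal is correct and follows exactly the same route as the paper's own proof: invoke Proposition \ref{prop-dom-alcove} to place $t_\lambda, t_\mu$ in $\prescript{f}{}{W}$, then chain Lemma \ref{lem-dom-order-eq-affine}, Lemma \ref{lem-lattice-affine-dominance-order}, and Lemma \ref{lem-dom-convhull} for the three equivalences \ref{lem-lattice-bruhat-1} $\Leftrightarrow$ \ref{lem-lattice-bruhat-a} $\Leftrightarrow$ \ref{lem-lattice-bruhat-2} $\Leftrightarrow$ \ref{lem-lattice-bruhat-3}. Nothing to add.
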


\begin{proof} 
    \ref{lem-lattice-bruhat-1} $\Leftrightarrow$ \ref{lem-lattice-bruhat-a}:
    Since $\lambda, \mu$ are dominant, we have $t_\lambda, t_\mu \in \prescript{f}{}{W}$ by Proposition~\ref{prop-dom-alcove}.
    Then the equivalence between~\ref{lem-lattice-bruhat-1} and~\ref{lem-lattice-bruhat-a} follows from Lemma~\ref{lem-dom-order-eq-affine}.

    \ref{lem-lattice-bruhat-a} $\Leftrightarrow$ \ref{lem-lattice-bruhat-2}: This is a particular case of Lemma~\ref{lem-lattice-affine-dominance-order}.

    \ref{lem-lattice-bruhat-2} $\Leftrightarrow$ \ref{lem-lattice-bruhat-3}:
    This is a particular case of Lemma~\ref{lem-dom-convhull}.    
\end{proof}

\begin{remark}
    The equivalences in Theorem~\ref{lem-lattice-bruhat} are not new.
    Similar statements appear in~\cite[Sect.\@~2]{lusztig1983singularities} and~\cite[Sect.\@~0 and Theorem~4.10]{stembridge2006}.
\end{remark}

The implication ``\ref{lem-lattice-bruhat-1} $\Rightarrow$~\ref{lem-lattice-bruhat-2}'' in Theorem~\ref{lem-lattice-bruhat} is also a particular case of the following proposition, whose proof also uses Lemma~\ref{lem-dom-order-eq-affine}.

\begin{proposition} \label{prop-element-to-lattice} 
    Suppose $x \in \prescript{f}{}{W} \cap \left( t_\lambda W_f \right)$ and $y \in \prescript{f}{}{W} \cap \left( t_\mu W_f \right)$.
    If $y \le x$, then $\mu \preceq \lambda$.
\end{proposition}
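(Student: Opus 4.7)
The plan is to convert the Bruhat--Chevalley relation into an affine-order relation, then translate that into a geometric inequality using the simple-coroot cone, and finally pass the relation from an interior point of $A_x$ to the boundary lattice point $\lambda$ by a limit argument.

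First I would invoke Lemma \ref{lem-dom-order-eq-affine}: since $x, y \in \prescript{f}{}{W}$, the hypothesis $y \le x$ gives $y \le_a x$ in the affine order. By Remark \ref{rmk-affine-order}, this yields that
\begin{equation*}
    u - y x^{-1}(u) \text{ is an $\mathbb{R}_{\ge 0}$-linear combination of simple coroots}
\end{equation*}
for every $u \in A_x$.

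Next, I would identify $\lambda$ and $\mu$ as the images of the origin. Writing $x = t_\lambda w_1$ with $w_1 \in W_f$ and using that $W_f$ fixes $0 \in E$, we get $x(0) = t_\lambda w_1(0) = \lambda$, and similarly $y(0) = \mu$. By Remark \ref{rmk-closure}, the point $\lambda$ lies in $\overline{A_x}$, so I can pick a sequence $\{u_n\}_{n \ge 1} \subset A_x$ with $u_n \to \lambda$. Since the affine transformation $yx^{-1}$ is continuous and $x^{-1}(\lambda) = 0$, we obtain
\begin{equation*}
    y x^{-1}(u_n) \longrightarrow y x^{-1}(\lambda) = y(0) = \mu.
\end{equation*}
Consequently, $u_n - yx^{-1}(u_n) \to \lambda - \mu$.

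To finish, I would note that the set
\begin{equation*}
    \mathcal{C} := \Bigl\{ \textstyle\sum_{i=1}^r c_i \alpha_i\spcheck \Bigm| c_i \in \mathbb{R}_{\ge 0} \Bigr\}
\end{equation*}
is a closed convex cone in $E$. Each $u_n - yx^{-1}(u_n)$ belongs to $\mathcal{C}$ by the first step, and $\mathcal{C}$ is closed, so the limit $\lambda - \mu$ also lies in $\mathcal{C}$. By Definition \ref{def-order} \ref{def-order-1}, this is precisely $\mu \preceq \lambda$.

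There is no serious obstacle here; the only subtlety is that Remark \ref{rmk-affine-order} provides the cone membership only for \emph{points inside} $A_x$, whereas $\lambda$ lies on the boundary $\overline{A_x} \setminus A_x$, which is why one must invoke the closedness of $\mathcal{C}$ to promote the inequality to the boundary lattice point.
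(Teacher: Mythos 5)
Your proof is correct and follows essentially the same route as the paper's: invoke Lemma \ref{lem-dom-order-eq-affine} to upgrade $y \le x$ to $y \le_a x$, apply Remark \ref{rmk-affine-order} to place $u - yx^{-1}(u)$ in the simple-coroot cone for $u \in A_x$, and pass to the limit $u \to \lambda$ using $yx^{-1}(\lambda) = y(0) = \mu$. The only difference is that you spell out the closedness of the cone $\mathcal{C}$, which the paper leaves implicit when asserting the limit remains a non-negative combination of simple coroots.
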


\begin{proof}
    Suppose $x = t_\lambda w$ and $y = t_\mu v$ where $w,v \in W_f$.
    If $y \le x$, then by Lemma~\ref{lem-dom-order-eq-affine}, we have $y \le_a x$. 
    By Corollary~\ref{cor-affine-order}, for any $u \in A_x$, the vector $u - yx^{-1} (u)$ is an $\mathbb{R}_{\ge 0}$-linear combination of simple coroots.
    Note that $\lambda \in \overline{A_x} = t_\lambda w \overline{A_+}$ and $\mu \in \overline{A_y} = t_\mu v \overline{A_+}$.
    We have 
    \begin{equation*}
      \lim_{u \in A_x, u \to \lambda} \bigl ( u - yx^{-1} (u) \bigr ) = \lambda - yx^{-1} (\lambda) = \lambda - t_\mu v w^{-1} t_{-\lambda} (\lambda) = \lambda - \mu,
    \end{equation*}
    and it is an $\mathbb{R}_{\ge 0}$-linear combination of simple coroots.
    Therefore, $\mu \preceq \lambda$.
\end{proof}

See Remark~\ref{rmk-closure} for the geometric interpretation of the coset $t_\lambda W_f$.

\section{Dominant elements} \label{sec-dom-lattice-formula}

For the rest of the paper, $\lambda \in \mathbb{Z} \Phi\spcheck \cap \overline{C_+}$ is a fixed dominant coroot lattice point. 

\subsection{The polytope \texorpdfstring{$P^\lambda$}{P-lambda} and the dominant lower interval \texorpdfstring{$\prescript{f}{}{[e,t_{\lambda}]}$}{f-e-t-lambda}} \label{subsec-def-Plambda}

As stated in the introduction, we are interested in the sequence $(\prescript{f}{}{b}_i^{t_{\lambda}})_i$. Recall that
\begin{align*}
    \prescript{f}{}{b}_i^{t_{\lambda}} & = \operatorname{Card} \left\{x \in \prescript{f}{}{W} \xmiddle| x \le t_\lambda, \ell(x) = i \right\} \\
    & = \operatorname{Card} \left\{x \in W \xmiddle| x \le t_\lambda, \ell(x) = i, A_x \subset C_+ \right\},
\end{align*}
where $i\in\{0, \ldots, \ell(t_{\lambda})\}$.
The second equality is due to Proposition~\ref{prop-dom-alcove}.
Instead of studying these numbers directly, we first study the partner sequence $(b_i^{\lambda})_i$, given by
\begin{align*}
    b_i^\lambda  :=& \operatorname{Card} \left\{\mu \in \mathbb{Z} \Phi\spcheck \cap \overline{C_+} \xmiddle| \mu \preceq \lambda, (2\rho | \mu) = i \right\} \\
     =& \operatorname{Card} \left\{\mu \in \mathbb{Z} \Phi\spcheck \cap \overline{C_+} \xmiddle| t_\mu \le t_\lambda, \ell(t_\mu) = i \right\}.
\end{align*}
The second equality is due to Lemma~\ref{lem-length-trans} and Theorem~\ref{lem-lattice-bruhat}.
For later use, we introduce $(b_{i,+}^\lambda)_i$, the ``strongly dominant'' version of $(b_i^\lambda)_i$, given by 
\begin{equation*}
    b^\lambda_{i,+} := \operatorname{Card} \left\{\mu \in \mathbb{Z} \Phi\spcheck \cap C_+ \xmiddle| \mu \preceq \lambda, (2\rho | \mu) = i \right\}.
\end{equation*}

We define the convex polytope $P^\lambda$ and its ``strongly dominant'' counterpart $P_+^\lambda$:
\begin{align*}
    P^\lambda &:= \operatorname{Conv}\{w \lambda \mid w \in W_f\} \cap \overline{C_+} \subset E, \\
    P_+^\lambda &:= \operatorname{Conv}\{w \lambda \mid w \in W_f\} \cap C_+ \subset E,
\end{align*}
where $\operatorname{Conv}\{w \lambda \mid w \in W_f\}$ is the convex hull of the finite set $\{w \lambda \mid w \in W_f\}$ in~$E$.
Note that $P^\lambda$ is a bounded closed subset of $E$, and $\overline{P_{+}^\lambda} = P^\lambda$.

We obtain the following proposition from Lemma~\ref{lem-dom-convhull} and Theorem~\ref{lem-lattice-bruhat}, which motivates the definition of the polytope $P^\lambda$.

\begin{proposition} \label{prop-polytope}
  We have the following four equalities:
  \begin{gather*}
      P^\lambda  =\overline{C_+} \cap \left\{\lambda - \sum_{i=1}^r c_i \alpha_i\spcheck \Biggm\vert c_i \in \mathbb{R}_{\ge 0} \right\}, \\
      P_+^\lambda  = C_+ \cap \left\{\lambda - \sum_{i=1}^r c_i \alpha_i\spcheck \Biggm\vert c_i \in \mathbb{R}_{\ge 0} \right\}, \\
      P^\lambda \cap \mathbb{Z} \Phi\spcheck  =\left\{\mu \in \mathbb{Z} \Phi\spcheck \cap \overline{C_+} \xmiddle| \mu \preceq \lambda \right\}, \\
      P^\lambda_+ \cap \mathbb{Z} \Phi\spcheck  =\left\{\mu \in \mathbb{Z} \Phi\spcheck \cap C_+ \xmiddle| \mu \preceq \lambda \right\}. 
  \end{gather*}
\end{proposition}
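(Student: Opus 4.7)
The plan is to derive all four equalities as direct corollaries of Lemma \ref{lem-dom-convhull}, which is the one nontrivial input. First I would unwind the description on the right-hand side: by Definition \ref{def-order} \ref{def-order-1}, for any vector $x \in E$ the condition ``$x = \lambda - \sum_{i=1}^r c_i \alpha_i\spcheck$ for some $c_i \in \mathbb{R}_{\ge 0}$'' is literally the condition $x \preceq \lambda$. Hence the right-hand sides of equalities (1) and (2) rewrite as $\overline{C_+} \cap \{x \in E \mid x \preceq \lambda\}$ and $C_+ \cap \{x \in E \mid x \preceq \lambda\}$ respectively, and equalities (3) and (4) become the intersections of these sets with $\mathbb{Z}\Phi\spcheck$.

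For equalities (1) and (2), since $\lambda \in \overline{C_+}$ by assumption, Lemma \ref{lem-dom-convhull} applies to every $x \in \overline{C_+}$ and yields the equivalence $x \in \operatorname{conv}(W_f \lambda) \Longleftrightarrow x \preceq \lambda$. Intersecting both sides of this equivalence with $\overline{C_+}$ gives (1) by the very definition of $P^\lambda = \operatorname{conv}(W_f\lambda) \cap \overline{C_+}$, and intersecting with the subset $C_+ \subset \overline{C_+}$ gives (2) by the definition of $P_+^\lambda$.

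For equalities (3) and (4), I would simply intersect both sides of (1) and (2) with the lattice $\mathbb{Z}\Phi\spcheck$: the left-hand sides become $P^\lambda \cap \mathbb{Z}\Phi\spcheck$ and $P_+^\lambda \cap \mathbb{Z}\Phi\spcheck$, while the right-hand sides become precisely $\{\mu \in \mathbb{Z}\Phi\spcheck \cap \overline{C_+} \mid \mu \preceq \lambda\}$ and $\{\mu \in \mathbb{Z}\Phi\spcheck \cap C_+ \mid \mu \preceq \lambda\}$.

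There is no real obstacle: the proposition is essentially a convenient repackaging of Lemma \ref{lem-dom-convhull}, trading the somewhat abstract description of $\operatorname{conv}(W_f \lambda)$ as a $W_f$-convex hull for the explicit ``cone-cut'' description in terms of subtracting non-negative combinations of simple coroots from $\lambda$. The only point that deserves attention is the bookkeeping check that the coefficient-form description on the right of (1) and (2) matches the definition of $\preceq$ verbatim, which is immediate.
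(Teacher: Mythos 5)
Your proof is correct and follows essentially the same route as the paper: both reduce all four equalities to Lemma \ref{lem-dom-convhull} after observing that the coefficient-form description on the right-hand sides is just the condition $x \preceq \lambda$ from Definition \ref{def-order}. The only difference is that the paper writes out the argument for the first equality and says the others hold by the same reasoning, while you spell out the intersections with $C_+$ and $\mathbb{Z}\Phi\spcheck$ explicitly.
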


\begin{proof}
    If $\mu \in \overline{C_+}$, then $\mu \preceq \lambda$ if and only if $\mu \in \operatorname{Conv}\{w \lambda \mid w \in W_f\}$ by Lemma~\ref{lem-dom-convhull}.
    Therefore,
    \[P^\lambda = \operatorname{Conv}\{w \lambda \mid w \in W_f\} \cap \overline{C_+} = \overline{C_+} \cap \{\mu \in E \mid \mu \preceq \lambda\}.\]
    This proves the first equality. 
    The other equalities hold by similar reasons.
\end{proof}

\begin{remark} \label{rmk-b-i-lambda}
    By definition, the number $b_i^\lambda$ is the cardinality of  $P^\lambda \cap \mathbb{Z} \Phi\spcheck \cap H_{2\rho,i}$.
    In other words, $b_i^\lambda$ counts the number of lattice points in the slice of $P^\lambda$ cut by the hyperplane $H_{2\rho,i}$.
    On the other hand, since $(2\rho|\mu)$ is an even integer for any $\mu \in \mathbb{Z} \Phi\spcheck$, each element in~$P^\lambda \cap \mathbb{Z} \Phi \spcheck$ is counted by $b_i^\lambda$ for a unique $i$, and also $b_i^\lambda = 0$ whenever $i$ is odd. 
    A similar observation also applies to $b_{i,+}^\lambda$ which counts the points in~$P_+^\lambda \cap \mathbb{Z} \Phi\spcheck \cap H_{2\rho,i}$ (that is, omitting those points on the walls of $C_+$). Obviously, $b_{i,+}^\lambda \le b_i^\lambda$.
\end{remark}

\begin{remark} \label{rem-cube} 
For $\nu \in \overline{C_+}$ (not necessarily a coroot lattice element), we can also define the polytope $P^\nu$. If $\nu$ is strongly dominant, then the face structure of the polytope $P^\nu$ has a simple description~\cite{BGHpolytope}: $P^\nu$ is combinatorially equivalent to an $r$-dimensional cube with $2^r$ vertices 
\[\left\{\nu - \sum\nolimits_{j \in J} c_j \alpha_j\spcheck \xmiddle| J \subseteq [r] \right\}, \]
where $(c_j)_{j \in J} = M_J^{-1} \cdot \bigl( (\alpha_j | \lambda) \bigr)_{j \in J}$ and $M_J$ is the square submatrix of the Cartan matrix $M$ corresponding to the index set $J$.
Thus, the combinatorial type of $P^\nu$ depends only on the rank $r$ of $\Phi$, and its vertices can be computed explicitly from the Cartan matrix.
On the other hand, if $\nu$ lies on some of the walls, the structure of the polytope $P^\nu$ is more complicated and not well explored.
However, for the present paper, the fact that $P^\lambda$ is bounded and convex is sufficient.
\end{remark}

By definition of $P^\lambda$ or by Proposition~\ref{prop-polytope}, $P^{k \lambda}$ is the $k$-fold dilation of $P^\lambda$:

\begin{corollary}
  $P^{k\lambda} = k P^\lambda := \{k x \mid x \in P^\lambda\}$, for $k \in \mathbb{N}$.
\end{corollary}

For $\mu \in \mathbb{Z} \Phi\spcheck$, we denote by $W_{f,\mu}$ the parabolic subgroup of $W_f$ generated by $\{s_i \mid 1 \le i \le r, (\mu| \alpha_i)=0\}$, and by $\prescript{\mu}{}{W_f}$ the set of minimal representatives for the right cosets $W_{f,\mu} \backslash W_f$.
Then, $W_{f,\mu}$ is the stabilizer of $\mu$ in~$W_f$.
In particular, if $\mu \in C_+$, then $W_{f,\mu}$ is trivial and $\prescript{\mu}{}{W_f} = W_f$.

The following lemma is needed to prove the dominant lattice formula (Theorem~\ref{thm-lattice-formula-in-the-introduction}).

\begin{lemma} \label{lem-lattice-onwall}
    Suppose $\mu \in P^\lambda \cap \mathbb{Z} \Phi\spcheck$.
    Let $w = t_\mu w_f \in t_\mu W_f$ be arbitrary, where $w_f \in W_f$.
    Then, $w \in  \prescript{f}{}{W}$ if and only if $w_f \in \prescript{\mu}{}{W_f}$.
    In this case, we have $w \le t_\mu \le t_\lambda$ and $\ell(w) = \ell(t_\mu) - \ell(w_f)$.

In particular, if $\mu \in P_+^\lambda \cap \mathbb{Z} \Phi\spcheck$, then we always have $w \le t_\mu$, $w \in \prescript{f}{}{W}$, and $\ell(w) = \ell(t_\mu) - \ell(w_f)$.
\end{lemma}

\begin{proof}
    We choose $\varepsilon$ small enough so that $\varepsilon \rho \spcheck \in A_+$ (see Corollary~\ref{cor-rho}).
    Then 
    \[w (\varepsilon \rho \spcheck) = \varepsilon w_f (\rho \spcheck) + \mu \in A_{w}.\]
    For any $i = 1, \dots, r$, we have 
    \begin{equation*}
        \bigl(w(\varepsilon\rho \spcheck) \big| \alpha_i\bigr)  = \varepsilon \bigl( w_f( \rho \spcheck) \big| \alpha_i\bigr) + (\mu | \alpha_i) = \varepsilon \bigl( \rho \spcheck \big| w_f^{-1} (\alpha_i) \bigr) + (\mu | \alpha_i).
    \end{equation*}
    Let $I_\mu := \{i \mid 1 \le i \le r, (\mu | \alpha_i) = 0\}$.
    If $i \notin I_\mu$, then $(\mu | \alpha_i) \ge 1$ and $(w(\varepsilon\rho \spcheck) | \alpha_i) > 0$.
    If otherwise $i \in I_\mu$, then $(\mu | \alpha_i) = 0$.
    In this case, $(w(\varepsilon\rho \spcheck) | \alpha_i) > 0$ if and only if $w_f^{-1} (\alpha_i) \in \Phi^+$.
    Therefore, we have the following equivalences:
    \begin{align*}
        w \in \prescript{f}{}{W} & \iff A_w \subset C_+ \text{ (by Proposition~\ref{prop-dom-alcove}),} \\
        & \iff \bigl(w (\varepsilon \rho \spcheck) \big| \alpha_i \bigr) > 0, \text{ for all } i = 1, \dots, r, \\
        & \iff \bigl(w (\varepsilon \rho \spcheck) \big| \alpha_i \bigr) > 0, \text{ for all } i \in I_\mu, \\
        & \iff w_f^{-1} (\alpha_i) \in \Phi^+ \text{ for all } i \in I_\mu, \\
        & \iff s_i w_f > w_f \text{ for all } i \in I_\mu, \\
        & \iff w_f \in \prescript{\mu}{}{W_f}.
    \end{align*}
    This proves the first assertion.

    Next, we show that for any $w_f \in \prescript{\mu}{}{W_f}$ we always have $t_\mu w_f \le t_\mu$.
    
    Let $w_f = s_{i_1} \cdots s_{i_l}$ be a reduced expression for $w_f$, where $1 \le i_1, \dots, i_l \le r$.
    For any $k = 1, \dots, l$, we have 
    \begin{equation}  \label{eq-lattice-onwall-2}
        s_{i_{k-1}} \cdots s_{i_1} t_\mu^{-1} (\varepsilon\rho \spcheck) = s_{i_{k-1}} \cdots s_{i_1} (\varepsilon\rho \spcheck - \mu) \in s_{i_{k-1}} \cdots s_{i_1} t_\mu^{-1} A_+,
    \end{equation}
    and
    \begin{equation} \label{eq-lattice-onwall-1}
        \bigl(s_{i_{k-1}} \cdots s_{i_1} (\varepsilon\rho \spcheck - \mu) \big| \alpha_{i_k}\bigr) = \varepsilon \bigl( \rho \spcheck \big| s_{i_1} \cdots s_{i_{k-1}} (\alpha_{i_k}) \bigr) - \bigl( \mu \big| s_{i_1} \cdots s_{i_{k-1}} (\alpha_{i_k}) \bigr).
    \end{equation}
    Note that $s_{i_1} \cdots s_{i_{k-1}} (\alpha_{i_k}) \in \Phi^+$ since $s_{i_1} \cdots s_{i_{k-1}} s_{i_k} > s_{i_1} \cdots s_{i_{k-1}}$.
    We write 
    \begin{equation*}
        \gamma_k := s_{i_1} \cdots s_{i_{k-1}} (\alpha_{i_k}) = \sum_{i = 1}^r c_{k,i} \alpha_i, \quad (c_{k,i} \in \mathbb{N}).
    \end{equation*}
    
    We claim that for any $k$, there exists $j \notin I_\mu$ such that $c_{k,j} \ne 0$.
    Otherwise, say, $\gamma_k = \sum_{i \in I_\mu} c_{k,i} \alpha_i$.
    For all $i \in I_\mu$ we have $w_f^{-1} \alpha_i \in \Phi^+$ since $w_f^{-1} s_i > w_f^{-1}$.
    Thus $w_f^{-1} (\gamma_k) \in \Phi^+$.
    However,
    \begin{equation*}
        w_f^{-1} (\gamma_k) = s_{i_l} \cdots s_{i_1} \cdot s_{i_1} \cdots s_{i_{k-1}} (\alpha_{i_k}) = s_{i_l} \cdots s_{i_{k}} (\alpha_{i_k}) \in \Phi^-,
    \end{equation*}
    which is a contradiction.
    Therefore, $c_{k,j} > 0$ for some $j \notin I_\mu$.

    Let $j$ be as above (may depend on~$k$).
    Note that $\mu \in \overline{C_+}$.
    Thus, $(\mu | \gamma_k) \ge (\mu | c_{k,j} \alpha_{j}) \ge (\mu | \alpha_{j}) \ge 1$.
    By Equation  \eqref{eq-lattice-onwall-1}, we have 
    \begin{equation*}
    \bigl(s_{i_{k-1}} \cdots s_{i_1} (\varepsilon\rho \spcheck - \mu) \big| \alpha_{i_k}\bigr) = \varepsilon \bigl( \rho \spcheck \big| s_{i_1} \cdots s_{i_{k-1}} (\alpha_{i_k}) \bigr) - (\mu | \gamma_k) < 0.
    \end{equation*}
    By \eqref{eq-lattice-onwall-2}, this implies that the alcoves $s_{i_{k-1}} \cdots s_{i_1} t_\mu^{-1} A_+$ and $A_+$ lie in different sides of $H_{\alpha_{i_k},0}$.
    Equivalently, $A_+$ and $t_\mu s_{i_1} \cdots s_{i_{k-1}} A_+$ lie in different sides of the affine hyperplane $t_\mu s_{i_1} \cdots s_{i_{k-1}} H_{\alpha_{i_k},0}$.
    By Lemma~\ref{lem-alcove-bruhat-order}\ref{lem-alcove-bruhat-order-3}, we have
    \begin{equation} \label{eq-lattice-onwall-3}
        t_\mu s_{i_1} \cdots s_{i_{k-1}} s_{i_k} < t_\mu s_{i_1} \cdots s_{i_{k-1}} \text{ (for all $k = 1, \dots, l$).} 
    \end{equation}
    Consequently, we have $t_\mu w_f \le t_\mu$ as we wanted. 
    Moreover, \eqref{eq-lattice-onwall-3} also yields the equation $\ell(t_\mu w_f) = \ell(t_\mu) - l$ where $l = \ell(w_f)$. 
    At last, $t_\mu \le t_\lambda$ by Theorem~\ref{lem-lattice-bruhat} and Proposition~\ref{prop-polytope}. 
    This completes the proof.
\end{proof}

We end this subsection with a corollary of the above lemma, which describes the interval $\prescript{f}{}{[e,t_{\lambda}]}$ in terms of the lattice points in~$P^\lambda$.

\begin{corollary} \label{cor-for-dom-lat-for}
  Let $\lambda \in \mathbb{Z} \Phi\spcheck \cap \overline{C_+}$. We have
  \begin{equation*}
    \prescript{f}{}{[e,t_{\lambda}]} = \{t_\mu w_f\in W \mid \mu \in P^\lambda\cap \mathbb{Z} \Phi \spcheck, w_f\in \prescript{\mu}{}{W_f} \}.
  \end{equation*}
\end{corollary}

\begin{proof}
    Any element $w \in \prescript{f}{}{[e, t_\lambda]}$ can be uniquely written in the form $w = t_\mu w_f$ where $\mu \in \mathbb{Z} \Phi\spcheck$ and $w_f \in W_f$.
    By Proposition~\ref{prop-element-to-lattice}, we have $\mu \preceq \lambda$.
    Moreover, $\mu \in \overline{C_+}$ since $A_w \subset C_+$ (by Proposition~\ref{prop-dom-alcove}) and $\mu \in \overline{A_w}$.
    Therefore, $\mu \in P^\lambda \cap \mathbb{Z} \Phi\spcheck$ by Proposition~\ref{prop-polytope}.
    By Lemma~\ref{lem-lattice-onwall}, we have $w_f \in \prescript{\mu}{}{W_f}$. 
    This proves ``$\subseteq$''.

    On the other hand, for any $\mu \in P^\lambda \cap \mathbb{Z} \Phi\spcheck$ and any $w_f \in \prescript{\mu}{}{W_f}$, we have $t_\mu w_f \le t_\lambda$ and $t_\mu w_f \in \prescript{f}{}{W}$ by Lemma~\ref{lem-lattice-onwall} again.
    That is, $t_\mu w_f \in \prescript{f}{}{[e,t_\lambda]}$. 
    This proves ``$\supseteq$''.
    \end{proof}
 
\subsection{The dominant lattice formula}

We define the \emph{Poincar\'e polynomials} $\pi^{t_\lambda}(q)$,  $\pi^\lambda(q)$ and $\pi^\lambda_+ (q)$ of the sequences $(\prescript{f}{}{b}_i^{t_\lambda})_i$, $(b_i^\lambda)_i$ and $( b_{i,+}^\lambda )_i$, respectively, by 
\begin{align*}
    \pi^{t_\lambda}(q)  &:= \sum_{0 \le i \le \ell(t_\lambda)} \prescript{f}{}{b}_i^{t_\lambda} q^i = \sum_{w \in \prescript{f}{}{[e, t_\lambda]}} q^{\ell(w)}, 
    \\
    \pi^\lambda(q)  &:= \sum_{0 \le i \le \ell(t_\lambda)} b_i^\lambda q^i = \sum_{\mu \in P^\lambda \cap \mathbb{Z} \Phi\spcheck } q^{(2\rho | \mu)}, \\
    \pi^\lambda_+ (q) &:= \sum_{0 \le i \le \ell(t_\lambda)} b_{i,+}^\lambda q^i = \sum_{\mu \in P^\lambda_+ \cap \mathbb{Z} \Phi\spcheck } q^{(2\rho | \mu)}.
\end{align*}
The \emph{Poincar\'e polynomial} $\pi_f(q)$ of the finite Weyl group~$W_f$ is defined to be 
\begin{equation*}
    \pi_f(q) := \sum_{w \in W_f} q^{\ell(w)} = \sum_{i} b_{i, f} q^i,
\end{equation*}
where
\[b_{i,f} := \operatorname{Card} \{w \in W_f \mid \ell(w) = i\}.\]
For $\mu \in \mathbb{Z} \Phi\spcheck$, we also define the \emph{Poincar\'e polynomial} $\prescript{\mu}{}{\pi_f}$ of the set $\prescript{\mu}{}{W_f}$ by
\[\prescript{\mu}{}{\pi_f}(q) := \sum_{w \in \prescript{\mu}{}{W_f}} q^{\ell(w)}.\]
In particular, if $\mu \in \mathbb{Z} \Phi\spcheck \cap C_+$, then $\prescript{\mu}{}{\pi_f} = \pi_f$.

Obviously $\pi^{t_\lambda}(q)$, $\pi^\lambda(q)$, $\pi_+^\lambda(q)$ and $\prescript{\mu}{}{\pi_f}(q)$ belong to $\mathbb{N}[q]$. 

\begin{remark}
The polynomials $\pi^{t_\lambda}(q)$, $\pi_f(q)$, and $\prescript{\mu}{}{\pi_f} (q)$ are indeed the Poincar\'e polynomials for the ordinary cohomology of the corresponding spherical Schubert variety, finite flag variety, and finite partial flag variety, respectively. 
However, we are not aware of any topological interpretation of the polynomials $\pi^\lambda(q)$ and $\pi_+^\lambda(q)$, and they are not palindromic or unimodal in general.
\end{remark}

We are ready to prove the dominant lattice formula (Theorem~\ref{thm-lattice-formula-in-the-introduction}):
\[\pi^{t_\lambda} (q) = \sum_{\mu \in P^\lambda \cap \mathbb{Z} \Phi\spcheck} q^{(2\rho | \mu)} \cdot \prescript{\mu}{}{\pi_f}(q^{-1}).\]

\begin{proof}[Proof of Theorem~\ref{thm-lattice-formula-in-the-introduction}]
    We have
    \begin{align*}
        \pi^{t_\lambda} (q) =  \sum_{w \in \prescript{f}{}{[e, t_\lambda]}} q^{\ell(w)} & = \sum_{\mu \in P^\lambda \cap \mathbb{Z} \Phi\spcheck, w_f \in \prescript{\mu}{}{W_f}} q^{\ell(t_\mu w_f)} \\
        & = \sum_{\mu \in P^\lambda \cap \mathbb{Z} \Phi\spcheck, w_f \in \prescript{\mu}{}{W_f}} q^{\ell(t_\mu)} \cdot q^{-\ell(w_f)} \\
        & = \sum_{\mu \in P^\lambda \cap \mathbb{Z} \Phi\spcheck} q^{\ell(t_\mu)} \cdot \sum_{w_f \in \prescript{\mu}{}{W_f}} q^{-\ell(w_f)} \\
        & = \sum_{\mu \in P^\lambda \cap \mathbb{Z} \Phi\spcheck} q^{(2\rho | \mu)} \cdot \prescript{\mu}{}{\pi_f} (q^{-1}). 
    \end{align*}
    The second equality is due to Corollary~\ref{cor-for-dom-lat-for}, and the third one is due to Lemma~\ref{lem-lattice-onwall}.
\end{proof}
 For an illustration of the formula in the case $\Phi$ of type $A_2$, see Figure~\ref{fig: similarity}.

\begin{remark}
The dominant lattice formula can be proved and understood geometrically.
Recall from Sections~\ref{subsec-spherical} and~\ref{subsec-intro-dominant} that the polynomial $\pi^{t_\lambda} (q)$ is equal to the Poincar\'e polynomial of the singular cohomology of the spherical Schubert variety $\overline{\mathcal{G} r_{\lambda}}$.
We recall the $K$-orbit decomposition,
\begin{equation*}
    \overline{\mathcal{G} r_\lambda}=\bigsqcup_{\mu \in \mathbb{Z} \Psi^{\vee} \cap \overline{C_{+}},\ \mu \preceq \lambda} \mathcal{G} r_\mu. 
\end{equation*}
Let $P_\mu\subset G$ be the parabolic subgroup generated by the root subgroups for simple roots $\alpha$ satisfying $\left(\mu | \alpha\right)=0$. 
Note that each $\mathcal{G} r_{\mu}$ has dimension $\left(2\rho | \mu\right)$ and is an affine space bundle over the finite partial flag variety $G/P_\mu$~\cite[Section 2]{zhu2016introduction}.
In particular, the Poincar\'e polynomial of $\mathcal{G} r_{\mu}$ is the same as the Poincar\'e polynomial of $G/P_\mu$, which is given by $\prescript{\mu}{}{\pi_f}\left(q\right)$. 
By Poincar\'e duality, $\prescript{\mu}{}{\pi_f}\left(q\right)$ is palindromic, so it differs from $q^{(2\rho | \mu)} \cdot \prescript{\mu}{}{\pi_f} (q^{-1})$ only by a power of $q$.
Since the odd cohomology of each $\mathcal{G} r_{\mu}$ is zero, the long exact sequence of cohomology splits, and this gives the dominance lattice formula. 
A similar argument can be found in~\cite[Section 2(iii)]{MR3952347}.
\end{remark}

\begin{figure}[ht]
    \centering
    \includegraphics[scale=0.9]{"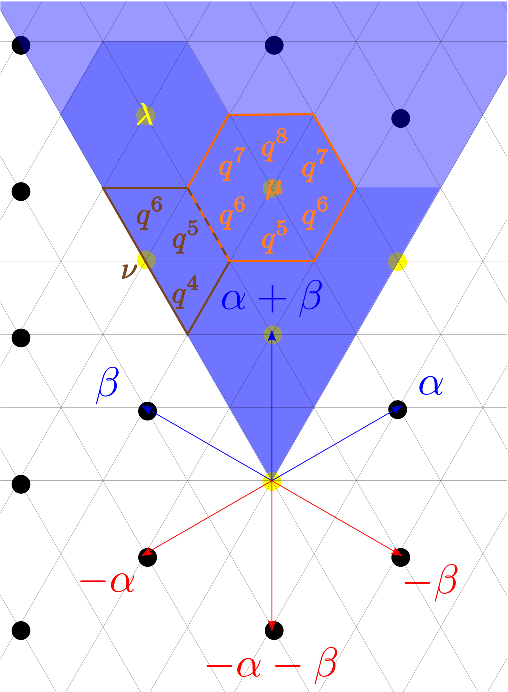"}
  \caption{Description of two of the summands of the dominant lattice formula when $W$ is of affine type $A_2$ and $\lambda=2\alpha+3\beta$, where $\alpha:=\alpha_1\spcheck$ and $\beta:=\alpha_2\spcheck$.
  The yellow points are the lattice points inside $P^\lambda$. The dominant Weyl chamber is blue-colored.
  The alcoves of the interval $\prescript{f}{}{[e,t_{\lambda}]}$ are colored with a darker blue. There are $6$ dominant alcoves arranged around the strongly dominant lattice point $\mu := 2 \alpha + 2 \beta$, and $3$ around the lattice point $\nu := \alpha + 2 \beta$ which is on the wall.  
  The summand corresponding to $\mu$ in the formula is given by 
  $q^8\cdot\prescript{\mu}{}{\pi_f}(q^{-1})=q^5+2q^6+2q^7+q^8$.
  The terms of this polynomial are colored orange and placed in the corresponding alcoves in the picture. 
  The summand corresponding to $\nu$ is given by 
  $q^6\cdot\prescript{\nu}{}{\pi_f}(q^{-1})=q^4+q^5+q^6$, 
  whose terms are colored brown.}
  \label{fig: similarity} 
\end{figure}

\subsection{Truncations} \label{subsec-truncation}
In this subsection, we introduce truncations of Laurent polynomials, which will be used in the proof of Theorem~\ref{thm-main-in-the-introduction}\ref{thm-main-1-in-the-introduction}. At the end of this subsection, we prove a useful consequence (Corollary~\ref{cor-lattice-formula}) of the dominant lattice formula.

For two Laurent polynomials $g(q), h(q) \in \mathbb{N}[q^{\pm 1}]$, 
we write $g(q) \le h(q)$ if $g_i \le h_i$ for any degree $i$ where $g_i, h_i \in \mathbb{N}$ are coefficients of $q^i$ in~$g(q)$ and $h(q)$, respectively.
Moreover, for any integer $n \in \mathbb{Z}$, we define the \emph{truncation} $T^{\le n} g(q)$ of $g(q)$ to be the Laurent polynomial given by 
\begin{equation*}
    T^{\le n} g(q) := \sum _{i \le n} g_i q^i.
\end{equation*}
For a real number $z$, we write $T^{\le z} g(q)$ to indicate the truncation $T^{\le \lfloor z \rfloor} g(q)$, where $\lfloor z \rfloor$ is the largest integer less than or equal to $z$.
By abuse of notation, we write $T^{\le z} g(1)$ to denote the evaluation of the Laurent polynomial $T^{\le z} g(q)$ at $q=1$. We need the following two lemmas.

\begin{lemma} \label{lem-laurent-truncation} Let $f(q), g(q), h(q) \in \mathbb{N}[q^{\pm 1}]$, we have:
    \begin{enumerate}
        \item \label{lem-laurent-truncation-1} If $g(q) \le h(q)$, then $T^{\le z} g(q) \le T^{\le z} h(q)$ for any $z \in \mathbb{R}$. Moreover, we have $g(1) \le h(1)$ and $T^{\le z} g(1) \le T^{\le z} h(1)$.
        \item \label{lem-laurent-truncation-2} If $g(q) \le h(q)$, then we have $f(q) \cdot g(q) \le f(q) \cdot h(q)$ and $f(q) + g(q) \le f(q) + h(q)$.
    \end{enumerate} 
\end{lemma}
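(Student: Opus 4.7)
The plan is to unfold the definition of the coefficient-wise order $\le$ on $\mathbb{N}[q^{\pm 1}]$ and verify each inequality componentwise; the lemma is essentially a bookkeeping exercise, and the only subtlety is identifying exactly where the non-negativity of coefficients is needed. Write $f(q) = \sum_i f_i q^i$, $g(q) = \sum_i g_i q^i$, and $h(q) = \sum_i h_i q^i$ with $f_i, g_i, h_i \in \mathbb{N}$ (all but finitely many zero), so that the hypothesis $g(q) \le h(q)$ unpacks to $g_i \le h_i$ for every $i \in \mathbb{Z}$.

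For part (1), I would observe that the coefficient of $q^i$ in $T^{\le z} g(q)$ equals $g_i$ when $i \le \lfloor z \rfloor$ and equals $0$ otherwise, and identically for $h(q)$. In either case the coefficient is bounded above by the corresponding coefficient of $T^{\le z} h(q)$, yielding $T^{\le z} g(q) \le T^{\le z} h(q)$. The evaluation at $q = 1$ is the (finite) sum of coefficients, so the coefficient-wise inequality between polynomials with non-negative coefficients passes to an inequality of sums: $g(1) = \sum_i g_i \le \sum_i h_i = h(1)$, and the same reasoning applied to the truncations gives $T^{\le z} g(1) \le T^{\le z} h(1)$.

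For part (2), addition is immediate: the coefficient of $q^i$ in $f(q) + g(q)$ equals $f_i + g_i$, which is bounded above by $f_i + h_i$, the coefficient of $q^i$ in $f(q) + h(q)$. For the product I would invoke the Cauchy product formula: the coefficient of $q^i$ in $f(q) \cdot g(q)$ equals $\sum_{j + k = i} f_j g_k$; since $f_j \ge 0$ and $g_k \le h_k$ we have $f_j g_k \le f_j h_k$ term by term, and summing yields
\begin{equation*}
    \sum_{j+k=i} f_j g_k \;\le\; \sum_{j+k=i} f_j h_k,
\end{equation*}
which is the coefficient of $q^i$ in $f(q) \cdot h(q)$.

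There is no serious obstacle, but the one point worth flagging is that the non-negativity of the coefficients of $f(q)$ is essential in the product statement: a negative coefficient of $f$ could flip a termwise inequality $g_k \le h_k$ after multiplication. This is precisely why the lemma is stated over $\mathbb{N}[q^{\pm 1}]$ rather than over $\mathbb{Z}[q^{\pm 1}]$, and why all three polynomials $\pi^{t_\lambda}(q)$, $\pi^\lambda(q)$, and $\prescript{I}{}{\pi_f}(q)$ appearing in the applications were explicitly shown to lie in $\mathbb{N}[q]$.
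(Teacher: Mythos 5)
Your proof is correct and spells out exactly the coefficient-wise bookkeeping that the paper dismisses with a one-word proof (``Clear.''). The observation that non-negativity of $f$ is what makes the product inequality work is a useful clarification, and it is the same reason the paper restricts to $\mathbb{N}[q^{\pm 1}]$.
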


\begin{proof}
    Obvious.
\end{proof}

\begin{lemma} \label{lem-laurent-truncation-3}
    Suppose $h(q) = \sum_{-l \leq i\le 0} h_i q^i \in \mathbb{N}[q^{- 1}]$, where $l \ge 1$ is an integer.
    Then, for any $g(q) \in \mathbb{N}[q^{\pm 1}]$ and $z \in \mathbb{R}$ we have 
    \begin{equation*}
        \left( T^{\le z} g(q) \right) \cdot h(q) \le T^{\le z} \bigl( g(q) \cdot h(q) \bigr) \le \left( T^{\le z + l} g(q) \right) \cdot h(q).
    \end{equation*}
\end{lemma}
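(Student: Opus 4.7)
The plan is to verify both inequalities coefficient by coefficient, using that all coefficients involved are non-negative. Write $g(q) = \sum_{k \in \mathbb{Z}} g_k q^k$ with $g_k \in \mathbb{N}$ (since $g \in \mathbb{N}[q^{\pm 1}]$) and recall that $h_i \in \mathbb{N}$ for $-l \le i \le 0$ with $h_i = 0$ otherwise. For a fixed integer $j$, the coefficient of $q^j$ in the product $g(q) h(q)$ is
\begin{equation*}
    [q^j]\bigl(g(q)h(q)\bigr) = \sum_{i=-l}^{0} g_{j-i} h_i,
\end{equation*}
while $[q^j]\bigl((T^{\le z} g)\, h\bigr)$ is the same sum restricted to those $i \in [-l,0]$ satisfying $j-i \le \lfloor z \rfloor$, and $[q^j]\bigl((T^{\le z+l} g)\, h\bigr)$ is the same sum restricted to $j-i \le \lfloor z+l \rfloor$. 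The truncation $T^{\le z}(g h)$ simply keeps $[q^j](g h)$ when $j \le \lfloor z \rfloor$ and is zero otherwise.

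First, I would establish the lower inequality $(T^{\le z} g)\, h \le T^{\le z}(g h)$. If $j \le \lfloor z \rfloor$, then $T^{\le z}(g h)$ at degree $j$ is the full sum $\sum_{i=-l}^{0} g_{j-i} h_i$, whereas the left-hand side is a sub-sum of non-negative terms, so the inequality is immediate. If instead $j > \lfloor z \rfloor$, then for every $i \in [-l,0]$ we have $j - i \ge j > \lfloor z \rfloor$, so the set of indices defining $[q^j]((T^{\le z} g) h)$ is empty; both sides vanish.

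Next, for the upper inequality $T^{\le z}(g h) \le (T^{\le z+l} g)\, h$: if $j > \lfloor z \rfloor$, the left-hand side is zero and there is nothing to check. If $j \le \lfloor z \rfloor$, then for every $i \in [-l,0]$,
\begin{equation*}
    j - i \le j + l \le \lfloor z \rfloor + l = \lfloor z + l \rfloor,
\end{equation*}
where the last equality uses that $l$ is an integer. Hence the index set defining $[q^j]((T^{\le z+l} g) h)$ contains the full range $i \in [-l,0]$, so this coefficient equals $[q^j](g h) = [q^j] T^{\le z}(g h)$, and in particular dominates it.

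There is no real obstacle here: the entire argument is a bookkeeping exercise on index ranges, and the only nontrivial ingredient is the observation that $\lfloor z \rfloor + l = \lfloor z + l \rfloor$ for integer $l$, which is precisely what makes the shift by $l$ in the upper truncation suffice to absorb the support of $h$.
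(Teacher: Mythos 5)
Your proof is correct and follows essentially the same coefficient-by-coefficient bookkeeping as the paper: for the lower bound, the restricted sum is a sub-sum of non-negative terms; for the upper bound, the constraints $i + j = k$, $-l \le j \le 0$, and $k \le \lfloor z \rfloor$ together force $i \le \lfloor z \rfloor + l = \lfloor z + l \rfloor$, so the truncation at $z + l$ absorbs the entire support. Your explicit handling of the floor function is slightly more careful than the paper's (which works directly with $i \le z$ and relies implicitly on $i$ being an integer), but the argument is the same.
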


\begin{proof}
    We write $g(q) = \sum_i g_i q^i$ where each $g_i \in \mathbb{N}$.
    Then 
    \begin{equation*}
        g(q) \cdot h(q) = \sum_k \Bigl( \sum_{i + j = k} g_i h_j \Bigr) q^k.
    \end{equation*}
    We denote by $(gh)_k := \sum_{i + j = k} g_i h_j$ the coefficient of $q^k$ in~$g(q) \cdot h(q)$.
    It is clear that the exponents of the $q$-powers of $\left( T^{\le z} g(q) \right) \cdot h(q)$ concentrate on the interval $(-\infty, z]$.
    For any integer $k \in (-\infty, z]$, the coefficient of $q^k$ in~$\left( T^{\le z} g(q) \right) \cdot h(q)$, say, $a_k$, satisfies
    \begin{equation*}
        a_k = \sum_{i \le z, i + j = k} g_i h_j \le \sum_{i + j = k} g_i h_j = (gh)_k.
    \end{equation*}
    This proves the first inequality $\left( T^{\le z} g(q) \right) \cdot h(q) \le T^{\le z} \bigl( g(q) \cdot h(q) \bigr)$.

    On the other hand, for any integer $k \in (-\infty, z]$, the restrictions $i+j = k$ and $-l \le j \le 0$ force $i \le z + l$.
    Therefore, we have
    \begin{equation*}
        (gh)_k = \sum_{i + j = k} g_i h_j = \sum_{i \le z + l, i + j = k} g_i h_j 
    \end{equation*}
    which equals the coefficient of $q^k$ ($k \le z$) in~$\left( T^{\le z + l} g(q) \right) \cdot h(q)$.
    While for $k > z$, the coefficient of $q^k$ in~$T^{\le z} \bigl( g(q) \cdot h(q) \bigr)$ is zero.
    Thus, 
    \[T^{\le z} \bigl( g(q) \cdot h(q) \bigr) \le \left( T^{\le z + l} g(q) \right) \cdot h(q)\]
    which is the second inequality.
\end{proof}

Recall that $\pi^{t_\lambda}(q), \pi^\lambda(q), \pi_+^\lambda(q), \prescript{\mu}{}{\pi_f}(q)$ are polynomials in~$\mathbb{N}[q]$.
We regard them as Laurent polynomials in~$\mathbb{N}[q^{\pm 1}]$. The following corollary of the dominant lattice formula is crucial in the proof of Theorem~\ref{thm-main-in-the-introduction}.

\begin{corollary} \label{cor-lattice-formula}
    $\pi_+^\lambda(q) \cdot \pi_f(q^{-1}) \le \pi^{t_\lambda} (q) \le \pi^\lambda(q) \cdot \pi_f(q^{-1})$.
\end{corollary}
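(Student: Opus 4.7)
The plan is to derive both inequalities directly from the dominant lattice formula
\[
\pi^{t_\lambda}(q) = \sum_{\mu \in P^\lambda \cap \mathbb{Z}\Phi\spcheck} q^{2(\rho|\mu)} \cdot \prescript{I_\mu}{}{\pi_f}(q^{-1})
\]
of Theorem \ref{thm-lattice-formula}, by comparing the ``local'' factor $\prescript{I_\mu}{}{\pi_f}(q^{-1})$ at each lattice point with the ``global'' factor $\pi_f(q^{-1})$, and by exploiting the inclusion $P_+^\lambda \cap \mathbb{Z}\Phi\spcheck \subseteq P^\lambda \cap \mathbb{Z}\Phi\spcheck$.

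For the upper bound, the key observation is that $\prescript{I_\mu}{}{W_f} \subseteq W_f$ for every $\mu$, since $\prescript{I_\mu}{}{W_f}$ is a set of coset representatives inside $W_f$. Hence the coefficient-wise inequality $\prescript{I_\mu}{}{\pi_f}(q^{-1}) \le \pi_f(q^{-1})$ holds in $\mathbb{N}[q^{-1}]$. Multiplying by the non-negative monomial $q^{2(\rho|\mu)}$ and summing over $\mu \in P^\lambda \cap \mathbb{Z}\Phi\spcheck$, Lemma \ref{lem-laurent-truncation} \ref{lem-laurent-truncation-2} yields
\[
\pi^{t_\lambda}(q) \le \sum_{\mu \in P^\lambda \cap \mathbb{Z}\Phi\spcheck} q^{2(\rho|\mu)} \cdot \pi_f(q^{-1}) = \pi^\lambda(q) \cdot \pi_f(q^{-1}).
\]

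For the lower bound, I would split the sum in the dominant lattice formula according to whether $\mu \in P_+^\lambda$ or $\mu \in P^\lambda \setminus P_+^\lambda$. For $\mu \in P_+^\lambda \cap \mathbb{Z}\Phi\spcheck$, strong dominance forces the index set $I_\mu = \{ i : (\mu|\alpha_i)=0 \}$ to be empty, so $\prescript{I_\mu}{}{\pi_f}(q^{-1}) = \pi_f(q^{-1})$, and the corresponding partial sum collapses to $\pi_+^\lambda(q) \cdot \pi_f(q^{-1})$. Since every remaining summand lies in $\mathbb{N}[q^{\pm 1}]$ and is therefore coefficient-wise non-negative, discarding those terms gives $\pi_+^\lambda(q) \cdot \pi_f(q^{-1}) \le \pi^{t_\lambda}(q)$.

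There is essentially no obstacle here: once the dominant lattice formula is in hand, the corollary is a bookkeeping exercise using the monotonicity recorded in Lemma \ref{lem-laurent-truncation}. The only point that deserves a line of justification is the equality $I_\mu = \emptyset$ on $P_+^\lambda$, which follows immediately from the definition of $P_+^\lambda$ as the \emph{open} slice $\operatorname{conv}(W_f \lambda) \cap C_+$. Everything else is monotonicity of non-negative coefficient sums.
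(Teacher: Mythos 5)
Your proposal is correct and follows essentially the same argument as the paper: the upper bound from the coefficient-wise inequality $\prescript{I_\mu}{}{\pi_f}(q^{-1}) \le \pi_f(q^{-1})$ applied summand by summand, and the lower bound from restricting the sum in the dominant lattice formula to $P_+^\lambda \cap \mathbb{Z}\Phi\spcheck$, where strong dominance forces $I_\mu = \emptyset$ so the local factor coincides with $\pi_f(q^{-1})$. The only cosmetic difference is that you split the sum and discard the complementary terms, whereas the paper runs the same chain of (in)equalities in the opposite direction starting from $\pi_+^\lambda(q)\cdot\pi_f(q^{-1})$.
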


\begin{proof}
    By definition, whatever $\mu$ is, we always have $\prescript{\mu}{}{\pi_f} (q^{-1}) \le \pi_f(q^{-1})$.
    Therefore, 
    \begin{align*}
       \pi^{t_\lambda} (q) & = \sum_{\mu \in P^\lambda \cap \mathbb{Z} \Phi\spcheck} q^{(2\rho | \mu)} \cdot \prescript{\mu}{}{\pi_f}(q^{-1}) \\
       & \le \sum_{\mu \in P^\lambda \cap \mathbb{Z} \Phi\spcheck} q^{(2\rho | \mu)} \cdot \pi_f(q^{-1}) = \pi^\lambda(q) \cdot \pi_f(q^{-1}),
    \end{align*}
    where the first equality is the dominant lattice formula, and the inequality follows from Lemma~\ref{lem-laurent-truncation}\ref{lem-laurent-truncation-2}.

    On the other hand, $\prescript{\mu}{}{\pi_f} (q^{-1}) = \pi_f(q^{-1})$ if $\mu \in \mathbb{Z} \Phi\spcheck \cap C_+$.
    Thus,
    \begin{align*}
        \pi_+^\lambda(q) \cdot \pi_f(q^{-1}) & = \sum_{\mu \in P^\lambda_+ \cap \mathbb{Z} \Phi\spcheck} q^{(2\rho | \mu)} \cdot \pi_f(q^{-1}) \\
        & = \sum_{\mu \in P^\lambda_+ \cap \mathbb{Z} \Phi\spcheck} q^{(2\rho | \mu)} \cdot \prescript{\mu}{}{\pi_f}(q^{-1}) \\
        & \le \sum_{\mu \in P^\lambda \cap \mathbb{Z} \Phi\spcheck} q^{(2\rho | \mu)} \cdot \prescript{\mu}{}{\pi_f}(q^{-1}) \\ 
        & = \pi^{t_\lambda} (q).
    \end{align*}
    This completes the proof.
\end{proof}

\section{The weak convergence of \texorpdfstring{$(\mathfrak{m}_1, \mathfrak{m}_2,\ldots)$}{m1, m2, ...}} \label{sec-main}

Let $\lambda \in \mathbb{Z} \Phi\spcheck \cap \overline{C_+}$ be a fixed dominant coroot lattice point as before. Recall that $\operatorname{Vol}_r$ denotes the Lebesgue measure on~$E$ induced by the inner product $(-,-)$.
Let $\operatorname{ht}\colon P^\lambda \to \mathbb{R}$ (``ht'' for ``height'')  be the linear map $\operatorname{ht} (x) := (2\rho|x)$.
The push-forward measure $\mathrm{ht}_*\mathrm{Vol}_r$ is defined as follows, for any Borel set $U \subseteq \mathbb{R}$,
\begin{equation*}
    (\mathrm{ht}_*\mathrm{Vol}_r)(U) := \operatorname{Vol}_r (\operatorname{ht}^{-1} U) = \operatorname{Vol}_r \left( \left\{x \in P^\lambda \xmiddle| (2\rho | x) \in U \right\} \right).
\end{equation*}
Note that by Proposition~\ref{prop-polytope} we have $0 \le \operatorname{ht} (x)  \le (2\rho|\lambda) = \ell(t_\lambda)$ for any $x \in P^\lambda$.
Therefore, $\mathrm{ht}_*\mathrm{Vol}_r$ is supported on~$[0, \ell(t_\lambda)]$.
Moreover, for any $z \in \mathbb{R}$ we have $\operatorname{ht}^{-1} (z) = P^\lambda \cap H_{2\rho, z}$.
Therefore, we also have 
\begin{equation*}
(\mathrm{ht}_*\mathrm{Vol}_r)(U) = \int_U \frac{1}{\lVert 2 \rho \rVert} \operatorname{Vol}_{r-1} (\operatorname{ht}^{-1} (z)) \, \mathrm{d}z.
\end{equation*}
Here the integral is the Lebesgue integral, $\lVert 2 \rho \rVert$ denotes the length of $2\rho$, and $\operatorname{Vol}_{r-1}$ is the Lebesgue measure on the hyperplanes $H_{2\rho, z}$ induced by the inner product $(-|-)$. 
The coefficient $1/\lVert 2\rho \rVert$ appears in the integral because, any line segment $\mathcal{L}$ perpendicular to $H_{2 \rho, z}$ is mapped by $\operatorname{ht}$ to an interval of length $\lVert 2\rho \rVert \cdot \lVert \mathcal{L} \rVert$.
In other words, the density function of the measure $\mathrm{ht}_*\mathrm{Vol}_r$ is
\[g(z) = \frac{1}{\lVert 2 \rho \rVert} \operatorname{Vol}_{r-1} (\operatorname{ht}^{-1} (z)).\]

For any real number $z \in \mathbb{R}$, let $\delta_z$ be the Dirac measure at $z$, that is, for any Borel set $U \subset \mathbb{R}$, $\delta_z (U) = 1$ if $z \in U$, and $\delta_z (U) = 0$ if $z \notin U$.
For each positive integer $k$, we define a measure $\mathfrak{m}_k$ on~$\mathbb{R}$ as follows,
\begin{equation*}
    \mathfrak{m}_k := \frac{1}{k^r}\sum_{i=0}^{\ell(t_{k \lambda})} \prescript{f}{}{b}_i^{t_{k\lambda}} \delta_{\frac{i}{k}}.
\end{equation*}
Note that $\ell(t_{k \lambda}) = k \ell(t_\lambda)$ by Corollary~\ref{cor-length-k-lambda}.

\subsection{Weak convergence of ``companion'' measures}

We  define two ``companions'' of $\mathfrak{m}_k$, denoted by $\mathfrak{m}_k^{\mathrm{lat}}$ and $\mathfrak{m}_{k,+}^{\mathrm{lat}}$ (``lat'' for ``lattice''),  respectively, by
\begin{equation*}
\mathfrak{m}_k^{\mathrm{lat}} := \frac{\lvert W_f \rvert}{k^r} \sum_{i=0}^{\ell(t_{k \lambda})} b_i^{k \lambda} \delta_{\frac{i}{k}}, \qquad 
\mathfrak{m}_{k,+}^{\mathrm{lat}} := \frac{\lvert W_f \rvert}{k^r} \sum_{i=0}^{\ell(t_{k \lambda})} b_{i,+}^{k \lambda} \delta_{\frac{i}{k}}.
\end{equation*}
Recall Lemma~\ref{lem-vol-D} that $\lvert W_f \rvert = \operatorname{Vol}_r (D) / \operatorname{Vol}_r(A_+)$ where $D$ is the open parallelotope spanned by the simple coroots.

\begin{remark} \label{rmk-measure-m-k-lat}
     In the definition of $\mathfrak{m}_k^\mathrm{lat}$, intuitively, we consider the $k$-fold dilation of the polytope $P^\lambda$, and count the $\mathbb{Z} \Phi\spcheck$-lattice points\footnote{This procedure is in a similar flavor as the theory of Ehrhart polynomial for lattice polytope~\cite{ehrhart1962polyedres}, although  $P^\lambda$ is not a lattice polytope in general.} in~$P^{k \lambda} \cap H_{2 \rho, i}$.
    Then we scale back and put the number $b_i^{k\lambda}$ obtained as the mass at the point $\frac{i}{k} \in \mathbb{R}$.
    This is equivalent to count the $\frac{1}{k} \mathbb{Z} \Phi\spcheck$-lattice points in the slice $P^\lambda \cap H_{2 \rho, \frac{i}{k}}$, that is, 
    \begin{equation} \label{eq-b-i-k-lambda}
        b_i^{k \lambda} = \operatorname{Card} \bigl( P^\lambda \cap (\frac{1}{k} \mathbb{Z} \Phi\spcheck) \cap H_{2 \rho, \frac{i}{k}} \bigr). 
    \end{equation} 
    In addition, as mentioned in Remark~\ref{rmk-b-i-lambda}, each element in~$P^\lambda \cap \bigl(\frac{1}{k} \mathbb{Z} \Phi\spcheck \bigr)$ contributes to the measure $\mathfrak{m}_k^\mathrm{lat}$ once and only once.
    Similar observations also apply to $\mathfrak{m}_{k,+}^\mathrm{lat}$.
    
    The scalar $\lvert W_f \rvert / {k^r}$ in the definition is for normalization, since there are $\lvert W_f \rvert$ many alcoves ``around'' a lattice point (see Remark~\ref{rmk-closure}).
\end{remark}

Clearly, all  the measures $\mathrm{ht}_*\mathrm{Vol}_r$, $\mathfrak{m}_k$, $\mathfrak{m}_k^\mathrm{lat}$ and $\mathfrak{m}_{k,+}^\mathrm{lat}$ are non-negative, bounded, and supported on~$[0,\ell(t_\lambda)]$. We have the following:
\begin{proposition} \label{prop-conv-measure-lat} \leavevmode
    \begin{enumerate}
        \item The sequence of measures $(\mathfrak{m}_k^\mathrm{lat})_k$  converges weakly to $\frac{1}{\operatorname{Vol}_r(A_+)}\mathrm{ht}_*\mathrm{Vol}_r$.
        \item The sequence of measures $(\mathfrak{m}_{k,+}^\mathrm{lat})_k$ converges  weakly to $\frac{1}{\operatorname{Vol}_r(A_+)}\mathrm{ht}_*\mathrm{Vol}_r$.
    \end{enumerate}
\end{proposition}

\begin{proof}
    To prove the first statement, by Lemma~\ref{lem-weak-convergence}, it suffices to show that for any $z \in [0,\ell(t_\lambda)]$, we have 
    \begin{equation} \label{eq-lem-conv-measure-lat-1}
        \lim_{k \to \infty} \mathfrak{m}_k^\mathrm{lat} ([0,z]) = \frac{1}{\operatorname{Vol}_r(A_+)} \mathrm{ht}_*\mathrm{Vol}_r  ([0,z]).
    \end{equation}
    We denote by $P^\lambda_{[0,z]} := \left\{x \in P^\lambda \xmiddle| 0 \le (2\rho| x) \le z \right\}$ the truncation of $P^\lambda$.
    Then 
    \begin{equation} \label{eq-lem-conv-measure-lat-2}
      \left(\mathrm{ht}_*\mathrm{Vol}_r \right) ([0,z]) = \operatorname{Vol}_r \bigl(P^\lambda_{[0,z]}\bigr).
    \end{equation}
    On the other hand, we have (see Remark~\ref{rmk-measure-m-k-lat})
    \begin{equation*}
        \sum_{i=0}^{k \ell(t_\lambda)} b_i^{k \lambda} \delta_{\frac{i}{k}} ([0,z]) = \sum_{i \in \mathbb{N}, 0 \le i \le kz} b_i^{k \lambda} =\operatorname{Card} \left( P^\lambda_{[0,z]} \cap \Bigl( \frac{1}{k} \mathbb{Z} \Phi\spcheck \Bigr)  \right),
    \end{equation*}
    namely, the number of lattice points of $\frac{1}{k} \mathbb{Z} \Phi\spcheck$ in the truncation $P^\lambda_{[0,z]}$.
    Therefore,
    \begin{align*}
       \mathfrak{m}_k^\mathrm{lat} ([0,z]) & = \frac{\lvert W_f \rvert}{k^r} \operatorname{Card} \left( P^\lambda_{[0,z]} \cap \Bigl( \frac{1}{k} \mathbb{Z} \Phi\spcheck \Bigr)  \right) \\ 
       & = \frac{1}{\operatorname{Vol}_r(A_+)} \operatorname{Vol}_r \left(\frac{D}{k}\right) \operatorname{Card} \left( P^\lambda_{[0,z]} \cap \Bigl( \frac{1}{k} \mathbb{Z} \Phi\spcheck \Bigr)  \right)
    \end{align*}
    is a Riemann sum, and the limit $\lim_k \mathfrak{m}_k^\mathrm{lat} ([0,z])$ is the Riemann integral 
    \[\int_{P^\lambda_{[0,z]}} \frac{1}{\operatorname{Vol}_r(A_+)} = \frac{1}{\operatorname{Vol}_r(A_+)} \operatorname{Vol}_r \bigl(P^\lambda_{[0,z]}\bigr).\]
    By Equation  \eqref{eq-lem-conv-measure-lat-2}, we see that Equation  \eqref{eq-lem-conv-measure-lat-1} is valid.

    The proof of the second statement is similar.
    We have the Riemann sum
    \begin{equation*}
       \mathfrak{m}_{k,+}^\mathrm{lat} ([0,z]) = \frac{\lvert W_f \rvert}{k^r} \operatorname{Card} \left( P^\lambda_{+, [0,z]} \cap \Bigl( \frac{1}{k} \mathbb{Z} \Phi\spcheck \Bigr)  \right),
    \end{equation*}
    where 
    \begin{equation*}
        P^\lambda_{+, [0,z]} := \left\{x \in P_+^\lambda \xmiddle| 0 \le (2\rho| x) \le z \right\}.
    \end{equation*}
    But $\operatorname{Vol}_r (P^\lambda_{[0,z]}) = \operatorname{Vol}_r (P^\lambda_{+,[0,z]})$.
    So the analog of Equation  \eqref{eq-lem-conv-measure-lat-1} also holds for $\mathfrak{m}_{k,+}^\mathrm{lat}$.
\end{proof}

\begin{remark}
    The sequences $(\mathfrak{m}_{k}^\mathrm{lat})_k$ and $(\mathfrak{m}_{k,+}^\mathrm{lat})_k$ do \emph{not} converge strongly (see Remark~\ref{rmk-strong-convergence}) to the limit measure. 
    For example, $Q = \mathbb{Q} \cap [0, \ell(t_\lambda)]$ is a Borel set and its inverse image $\operatorname{ht}^{-1} (Q)$ consists of a countable union of $(r-1)$-dimensional slices, so $(\mathrm{ht}_*\mathrm{Vol}_r) (Q) = 0$.
    However, $\lim_k \mathfrak{m}_k^\mathrm{lat} (Q) = \operatorname{Vol}_r(P^\lambda) / \operatorname{Vol}_r(A_+) > 0$, this is because any point mass of any $\mathfrak{m}_k^\mathrm{lat}$ is supported on~$Q$.
\end{remark}

\subsection{Proof of Theorem~\ref{thm-main-in-the-introduction}\ref{thm-main-1-in-the-introduction}}

Recall the theorem states that the sequence of measures $(\mathfrak{m}_k)_k$ converges weakly to the measure $\frac{1}{\operatorname{Vol}_r (A_+)}\mathrm{ht}_*\mathrm{Vol}_r$.

\begin{proof}
    As in the proof of Proposition~\ref{prop-conv-measure-lat}, it suffices to show that  
    \begin{equation} \label{eq-thm-main-1}
        \text{for any $z \in [0, \ell(t_\lambda)]$, we have }
        \lim_{k \to \infty} \mathfrak{m}_k ([0,z]) = \frac{1}{\operatorname{Vol}_r(A_+)} \mathrm{ht}_*\mathrm{Vol}_r ([0,z]). 
    \end{equation}
    For any $k = 1,2, \dots $, we have 
    \begin{equation} \label{eq-thm-main-2}
      \begin{aligned}
          \mathfrak{m}_k ([0,z]) & = \frac{1}{k^r}\sum_{i=0}^{ \ell(t_{k\lambda})}\prescript{f}{}{b}_i^{t_{k\lambda}} \delta_{\frac{i}{k}} ([0,z]) \\
          & = \frac{1}{k^r} \sum_{i \in \mathbb{N}, 0 \le i\le kz} \prescript{f}{}{b}_i^{t_{k\lambda}} \\
          & = \frac{1}{k^r} T^{\le kz} \pi^{t_{k \lambda}} (1),
      \end{aligned}
    \end{equation}
    where $T^{\le kz}$ is the truncation and $\pi^{t_{k \lambda}}$ is the Poincar\'e polynomial from Section~\ref{sec-dom-lattice-formula}.
    
    By Corollary~\ref{cor-lattice-formula} and Lemma~\ref{lem-laurent-truncation}\ref{lem-laurent-truncation-1}, we have
    \begin{equation} \label{eq-thm-main-3}
        T^{\le kz} \left( \pi^{k\lambda}_+ (q) \cdot \pi_f(q^{-1}) \right) \le T^{\le kz} \pi^{t_{k \lambda}} (q) \le T^{\le kz} \left( \pi^{k\lambda} (q) \cdot \pi_f(q^{-1}) \right).
    \end{equation}
    Let $l := \lvert \Phi^+ \rvert$ be the length of the longest element in~$W_f$.
    Then the exponents of $q$-powers in the Laurent polynomial $\pi_f(q^{-1})$ concentrate on the interval $[-l, 0]$.
    Therefore, Lemma~\ref{lem-laurent-truncation-3} applies to the first and third terms of \eqref{eq-thm-main-3}.
    We have
    \begin{equation} \label{eq-thm-main-4}
       \left( T^{\le kz}  \pi^{k\lambda}_+ (q) \right) \cdot \pi_f(q^{-1})   \le T^{\le kz} \left( \pi^{k\lambda}_+ (q) \cdot \pi_f(q^{-1}) \right) \le T^{\le kz} \pi^{t_{k \lambda}} (q), 
    \end{equation}
    where the first inequality is obtained by applying Lemma~\ref{lem-laurent-truncation-3} to the first term of \eqref{eq-thm-main-3}, and the second one is exactly the first inequality of \eqref{eq-thm-main-3}.
    Evaluating the first and the third term of \eqref{eq-thm-main-4} at $q=1$, we obtain
    \begin{equation} \label{eq-thm-main1-1}
      \lvert W_f \rvert \cdot T^{\le kz} \pi^{k\lambda}_+ (1) \le  T^{\le kz} \pi^{t_{k \lambda}} (1).      
    \end{equation}
    By Equation  \eqref{eq-thm-main-2}, the right hand side of \eqref{eq-thm-main1-1} equals $k^r \mathfrak{m}_k([0,z])$.
    Therefore, by the definitions of $\mathfrak{m}_{k,+}^\mathrm{lat}$ and $\pi_+^{k\lambda} (q)$, we have
    \begin{equation} \label{eq-thm-main-5}
      \mathfrak{m}_{k,+}^\mathrm{lat} ([0,z]) = \frac{\lvert W_f \rvert}{k^r} \sum_{i \in \mathbb{N}, 0 \le i\le kz} b_{i,+}^{k \lambda} = \frac{\lvert W_f \rvert}{k^r} \cdot T^{\le kz} \pi^{k\lambda}_+ (1) \le \mathfrak{m}_k([0,z]).
    \end{equation}
    
    On the other hand, applying Lemma~\ref{lem-laurent-truncation-3} to the third term of \eqref{eq-thm-main-3}, we also have 
    \begin{equation}\label{eq-thm-main-6}
      T^{\le kz} \pi^{t_{k \lambda}} (q) \le T^{\le kz} \left( \pi^{k\lambda} (q) \cdot \pi_f(q^{-1}) \right) \le \left( T^{\le kz + l} \pi^{k\lambda} (q) \right) \cdot \pi_f(q^{-1}),
    \end{equation}
    where the first inequality comes from \eqref{eq-thm-main-3}.
    Evaluating the first and the third term of \eqref{eq-thm-main-6} at $q=1$ yields
    \begin{equation*}
      T^{\le kz} \pi^{t_{k \lambda}} (1) \le \lvert W_f \rvert \cdot  T^{\le kz + l} \pi^{k\lambda} (1).
    \end{equation*}
    Therefore, by Equation  \eqref{eq-thm-main-2} and the definitions of $\pi^{k\lambda} (q)$ and $\mathfrak{m}_k^\mathrm{lat}$, we have
    \begin{equation}\label{eq-thm-main-7}
      \begin{aligned}
          \mathfrak{m}_k([0,z]) & \le \frac{\lvert W_f \rvert}{k^r} \cdot T^{\le kz + l} \pi^{k\lambda} (1) \\ 
          & = \frac{\lvert W_f \rvert}{k^r} \sum_{0 \le i \le kz+l} b_{i}^{k \lambda} \\
          & = \mathfrak{m}_{k}^\mathrm{lat} \Bigl([0,z+\frac{l}{k}]\Bigr) \\
          & = \mathfrak{m}_{k}^\mathrm{lat} ([0,z]) + \mathfrak{m}_{k}^\mathrm{lat} \Bigl((z,z+\frac{l}{k}]\Bigr).
      \end{aligned}
    \end{equation}
    
    The inequalities \eqref{eq-thm-main-5} and  \eqref{eq-thm-main-7} tell us
    \begin{equation}\label{eq-thm-main-8}
        \mathfrak{m}_{k,+}^\mathrm{lat} ([0,z]) \le \mathfrak{m}_k([0,z]) \le \mathfrak{m}_{k}^\mathrm{lat} ([0,z]) + \mathfrak{m}_{k}^\mathrm{lat} \Bigl((z,z+\frac{l}{k}]\Bigr).
    \end{equation}
    By the proof of Proposition~\ref{prop-conv-measure-lat}, we have 
    \begin{equation} \label{eq-thm-main-9}
        \lim_{k \to \infty} \mathfrak{m}_{k,+}^\mathrm{lat} ([0,z]) = \lim_{k \to \infty} \mathfrak{m}_{k}^\mathrm{lat} ([0,z]) = \frac{1}{\operatorname{Vol}_r(A_+)} \mathrm{ht}_*\mathrm{Vol}_r ([0,z]).
    \end{equation}
    So it remains to show that the ``error term'' $\mathfrak{m}_{k}^\mathrm{lat} \left((z,z+\frac{l}{k}]\right)$ tends to zero.
    For any $\delta \in \mathbb{R}_{>0}$, we have $\frac{l}{k} < \delta$ for $k$ large enough.
    Thus, 
    \begin{equation*}
        \lim_{k \to \infty} \mathfrak{m}_{k}^\mathrm{lat} \Bigl((z,z+\frac{l}{k}]\Bigr) \le \lim_{k \to \infty} \mathfrak{m}_{k}^\mathrm{lat} \bigl((z, z+\delta]\bigr) = \frac{1}{\operatorname{Vol}_r(A_+)}\operatorname{Vol}_r (P^\lambda_{[z,z+\delta]}).
    \end{equation*}
    But $\delta \in \mathbb{R}_{> 0}$ can be taken to be arbitrarily small, and $P^\lambda$ is a bounded polytope.
    Therefore $\operatorname{Vol}_r (P^\lambda_{[z,z+\delta]})$ can be  arbitrarily small.
    This proves 
    \begin{equation} \label{eq-thm-main-10}
        \lim_{k \to \infty} \mathfrak{m}_{k}^\mathrm{lat} \Bigl((z,z+\frac{l}{k}]\Bigr) = 0.
    \end{equation}
    By \eqref{eq-thm-main-8}, \eqref{eq-thm-main-9}, and \eqref{eq-thm-main-10}, we obtain \eqref{eq-thm-main-1}, as desired.
\end{proof}

\section{The uniform convergence of \texorpdfstring{$(S_1, S_2,\ldots)$}{S1, S2, ...}} \label{sec-main-unifm-convg}
Let $\lambda \in \mathbb{Z} \Phi\spcheck \cap \overline{C_+}$ be a fixed dominant coroot lattice point as before. 

For any positive integer $k$, we define the step function $S_k \colon [0, \ell(t_\lambda)] \to \mathbb{R}$ as follows.
For any $z \in [0, \ell(t_\lambda)]$, there exists a unique $i \in \{0, 1, \dots, k\ell(t_\lambda)\}$ such that $z \in [\frac{i}{k}, \frac{i+1}{k})$.
We define 
\[S_k(z) := \frac{1}{k^{r-1}} \prescript{f}{}{b}_{i}^{t_{k\lambda}}.\]

\begin{remark} \label{rem-density}
The function $S_k(z)$ can be interpreted as an approximation of the density function (which in this case does not exist) of the discrete measure $\mathfrak{m}_k$ in the following sense:
    for any $z \in [0, \ell(t_\lambda)]$, there exists a unique $i \in \{0, 1, \dots, k\ell(t_\lambda)\}$ such that $z \in [\frac{i}{k}, \frac{i+1}{k})$, and we have
    \begin{equation} \label{eq-density}
        \mathfrak{m}_k([0, z]) = \frac{1}{k^r} \sum_{0 \le j\le i} \prescript{f}{}{b}_j^{t_{k\lambda}} = \int_0^{\frac{i+1}{k}} S_k(x) \, \mathrm{d}x.
    \end{equation}
\end{remark}

Let us recall the statement of Theorem~\ref{thm-main-in-the-introduction}\ref{thm-main-2-in-the-introduction}:
    The sequence of step functions $(S_k(z))_k$ converges uniformly to the function
    \[z \mapsto \frac{1}{\operatorname{Vol}_r(A_+) \cdot \lVert 2 \rho \rVert} \operatorname{Vol}_{r-1} (\operatorname{ht}^{-1} (z)).\]  

Theorem~\ref{thm-main-in-the-introduction}\ref{thm-main-2-in-the-introduction} has the following corollary:
\begin{corollary} \label{cor-layer} For every $0\leq i\leq \ell(t_\lambda)$, we have
\begin{equation*}
    \lim_{k \to \infty} \frac{1}{k^{r-1}} \prescript{f}{}{b}_{ki}^{t_{k\lambda}} = \frac{1}{\operatorname{Vol}_r(A_+) \cdot \lVert 2 \rho \rVert} \operatorname{Vol}_{r-1} (\operatorname{ht}^{-1} (i)).
\end{equation*}
\end{corollary}

\begin{proof}
    This is because $\lim_k S_k(i) = g(i) / \operatorname{Vol}_{r} (A_+)$ by Theorem~\ref{thm-main-in-the-introduction}\ref{thm-main-2-in-the-introduction}.
\end{proof}

The proof of Theorem~\ref{thm-main-in-the-introduction}\ref{thm-main-2-in-the-introduction}, which occupies Sections~\ref{subsec-Eud-geom} to~\ref{subsec-pf-eq}, is more technical than that of Theorem~\ref{thm-main-in-the-introduction}\ref{thm-main-1-in-the-introduction}. 
Therefore, before the full proof, we outline the main ideas.

\subsection{Outline of the proof of Theorem~\ref{thm-main-in-the-introduction}\ref{thm-main-2-in-the-introduction}} \label{subsec-outline}

To prove the uniform convergence, 
we need to estimate the value $\prescript{f}{}{b}_{i}^{t_{k \lambda}}/ k^{r-1}$ of the step function $S_k$ at $z \in [\frac{i}{k}, \frac{i+1}{k})$.
Using Corollary~\ref{cor-lattice-formula}, we switch this estimation to the estimation of the numbers
\begin{equation} \label{eq-card-lat}
    b^{k \lambda}_{i+j} = \operatorname{Card} \left\{P^\lambda \cap \frac{1}{k} \mathbb{Z} \Phi\spcheck \cap H_{2\rho, \frac{i+j}{k}} \right\}
\end{equation}
and their companions $b^{k \lambda}_{i+j, +}$, where $0 \le j \le \lvert \Phi^+ \rvert$ (see Inequality \eqref{eq-main-1}).
Note that $b^{k \lambda}_{i+j} \ne 0$ only if $i+j$ is even, and in this case $(\frac{1}{k} \mathbb{Z} \Phi\spcheck) \cap H_{2\rho, \frac{i+j}{k}}$ is a lattice of rank $(r-1)$ in the affine hyperplane $H_{2\rho, \frac{i+j}{k}}$.

We choose a fundamental domain~$B_k$ of the lattice $(\frac{1}{k} \mathbb{Z}\Phi\spcheck) \cap H_{2 \rho, 0}$ containing the origin.
If we join all the translations of $B_k$ by points in~$P^\lambda \cap (\frac{1}{k} \mathbb{Z}\Phi\spcheck) \cap H_{2\rho, \frac{i+j}{k}}$, we obtain the region
\begin{equation*} 
    \mathcal{R} = \mathcal{R}_{k,i,j} := \bigsqcup \left\{ p + B_k \; \middle| \; p\in P^\lambda \cap \frac{1}{k} \mathbb{Z}\Phi\spcheck \cap H_{2\rho, \frac{i+j}{k}} \right\} \subset H_{2\rho, \frac{i+j}{k}}.
\end{equation*}
Since we can compute the volume of $B_k$ directly from $\Phi$, estimating the value of \eqref{eq-card-lat} is equivalent to estimating $\operatorname{Vol}_{r-1}(\mathcal{R})$.
The proof of the convergence is then achieved by comparing $\operatorname{Vol}_{r-1}(\mathcal{R})$ with $\operatorname{Vol}_{r-1}(P^\lambda\cap H_{2\rho, z})$.
This, as well as the uniformity, requires carefully estimating the volume of some open neighborhood of the boundary of  $P^\lambda\cap H_{2\rho, z}$ (see Figures~\ref{fig:P-pmdelta} and~\ref{fig:P-pmdelta-N} for an example of such a neighborhood).
When $k$ is large enough, for any $z$, the boundary of $\mathcal{R}$ is ``contained'' in such a neighborhood. 
Because the volume of such a neighborhood can be sufficiently small, $\operatorname{Vol}_{r-1}(\mathcal{R})$ can be sufficiently close to $\operatorname{Vol}_{r-1}(P^\lambda\cap H_{2\rho, z})$. This leads to the proof of the uniform convergence.

\subsection{Volume estimations} \label{subsec-Eud-geom}


Only in this subsection, let $E = \mathbb{R}^n$ be an arbitrary Euclidean space of dimension $n$ with inner product $(-|-)\colon E \times E \to \mathbb{R}$.
In the proof of Theorem~\ref{thm-main-in-the-introduction}\ref{thm-main-2-in-the-introduction}, we will use the results in this subsection mainly in the case $n = r-1$.

For two points $x, y \in E$, we denote by $\operatorname{d} (x, y)$ the \emph{distance} between $x$ and $y$, that is, $\operatorname{d} (x,y) = \sqrt{(x-y|x-y)}$.
For a nonempty subset $Y \subset E$ the \emph{distance} between $x$ and $Y$ is defined by
\[\operatorname{d}(x,Y) := \inf_{y \in Y} \operatorname{d}(x,y)\]
and the \emph{diameter} of $Y$ is defined by
\[\operatorname{diam}(Y) := \sup_{y,y' \in Y} \operatorname{d} (y,y').\]
For an $n$-dimensional polytope $P$ with $n > 0$, its \emph{boundary} $\partial P$ is the union of all its faces of dimension strictly less than $n$. Both $P$ and $\partial P$ are bounded closed sets. 
For any real number $\delta > 0$, we define
\begin{equation*}
    P^{+\delta} := \{x \in E \mid \operatorname{d}(x,P) < \delta\}, \qquad
    P^{-\delta} := \{x \in P \mid \operatorname{d}(x, \partial P) > \delta\}.
\end{equation*}
For example, in Figure~\ref{fig:P-pmdelta}, $P$ is the solid triangle in a Euclidean plane, $P^{+\delta}$ is the interior of the dotted area, and $P^{-\delta}$ is the interior of the dashed triangle.
\begin{figure} [ht]
    \centering
    \begin{tikzpicture}
        \coordinate (A) at (0,0);
        \coordinate (B) at (4,0);
        \coordinate (C) at (60:4);

        \coordinate (Ain) at (30:1);
        \coordinate (Bin) at ($(B) + (150:1)$);
        \coordinate (Cin) at ($(C) + (270:1)$);

        \coordinate (A1) at ($(A) + (270:0.5)$);
        \coordinate (A2) at ($(A) + (150:0.5)$);
        \coordinate (B1) at ($(B) + (30:0.5)$);
        \coordinate (B2) at ($(B) + (270:0.5)$);
        \coordinate (C1) at ($(C) + (150:0.5)$);
        \coordinate (C2) at ($(C) + (30:0.5)$);

        \draw [thick] (A) -- (B) -- (C) -- (A);
        \draw [dashed, thick] (Ain) -- (Bin) -- (Cin) -- (Ain);
        \draw [dotted, thick] (A2) -- (C1); 
        \draw [dotted, thick] (C2) -- (B1); 
        \draw [dotted, thick] (B2) -- (A1);
        \draw [dotted, thick] (A2) arc (150:270:0.5);
        \draw [dotted, thick] (B2) arc (-90:30:0.5);
        \draw [dotted, thick] (C2) arc (30:150:0.5);
        \draw [->, thick] (2, 0) -- (2, 0.5);
        \draw [->, thick] (2, 0.5) -- (2, 0);
        \draw [->, thick] (2, 0) -- (2, -0.5);
        \draw [->, thick] (2, -0.5) -- (2, 0);

        \coordinate [label=left:$\delta$] (d1) at (2,0.25);
        \coordinate [label=left:$\delta$] (d2) at (2,-0.25);
    \end{tikzpicture}
    \caption{A triangle $P$ and the $P^{+\delta}$, $P^{-\delta}$.}
    \label{fig:P-pmdelta}
\end{figure}
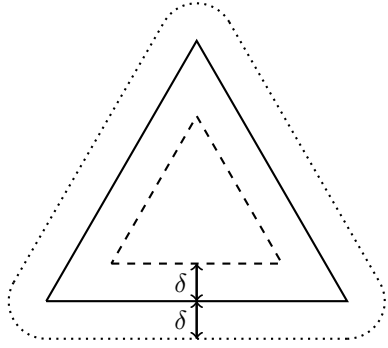

A moment's thought shows that the difference set $P^{+\delta} \setminus \overline{P^{-\delta}}$, where $\overline{P^{-\delta}}$ is the closure of $P^{-\delta}$, is the open neighbourhood
\[\mathcal{N}(\partial P, \delta) := \{x \in E \mid \operatorname{d}(x, \partial P) < \delta\}\]
of $\partial P$.
We want to estimate its volume, $\operatorname{Vol}_n (\mathcal{N}(\partial P, \delta))$.
Before this, we need to introduce some notations.

Suppose $F^\circ$ is an open face of $P$ and $\dim F^\circ \le n - 1$.
Then $F^\circ \subseteq \partial P$.
Let $x \in F^\circ$ be an arbitrary point, and $H^\perp_{x}$ be the affine subspace transversal to $\langle F^\circ \rangle_\text{aff}$ at $x$, that is, the ($n - \dim F^\circ$)-dimensional affine subspace passing through $x$ and perpendicular to $\langle F^\circ \rangle_\text{aff}$.
We denote by
\[B_{x, \delta}^\perp := \{y \in H^\perp_x \mid \operatorname{d}(x,y) < \delta\}\]
the open ball in~$H^\perp_x$ with center $x$ and radius $\delta$, and by 
\[C_{F^\circ, \delta} := \bigsqcup_{x \in F^\circ} B_{x, \delta}^\perp\]
the disjoint union of the balls.
Geometrically, $C_{F^\circ, \delta}$ is isometric to the product of $F^\circ$ and an ($n - \dim F^\circ$)-ball of radius $\delta$, which looks like a ``cylinder''.
For instance, in the case $n = 3$, Figure~\ref{fig:cylinder} illustrates examples of $C_{F^\circ, \delta}$, where $F^\circ$ is an open triangle, an open segment, and a point, respectively. 

\begin{figure} [ht]
    \centering
    \begin{tikzpicture}
        \fill [fill=lightgray] (0,0) -- (3,0) -- (2,0.8) -- (0,0);

        \draw [dotted, thick] (2,0.3) -- (2,1.3);
        \draw [dotted, thick] (3, -0.5) -- (2,0.3) -- (0,-0.5);
        \draw [dashed, thick] (0, 0.5) -- (0,-0.5) -- (3,-0.5) -- (3,0.5) -- (0,0.5) -- (2,1.3) -- (3,0.5);
        \draw [thick] (0,0) -- (3,0) --  (2,0.8) -- (0,0);
        
        \draw [thick, ->] (-0.2, 0) -- (-0.2,0.5);
        \draw [thick, ->] (-0.2, 0) -- (-0.2,-0.5);
        \draw [thick, ->] (-0.2, 0.5) -- (-0.2,0);
        \draw [thick, ->] (-0.2, -0.5) -- (-0.2,0);

        \coordinate [label=left:$\delta$] (d1) at (-0.2,0.25);
        \coordinate [label=left:$\delta$] (d2) at (-0.2,-0.25);

        \draw [thick] (5,0) -- (7,0);
        \draw [dashed, thick] (5,0) ellipse (0.2 and 0.5);
        \draw [dashed, thick] (5, 0.5) -- (7,0.5);
        \draw [dashed, thick] (5, -0.5) -- (7,-0.5);
        \draw [dashed, thick] (7,-0.5) arc (-90:90:0.2 and 0.5);
        \draw [dotted, thick] (7,0.5) arc (90:270:0.2 and 0.5);

        \draw [->, thick] (6,0) -- (6, 0.5);
        \draw [->, thick] (6,0.5) -- (6, 0);
        \coordinate [label=left:$\delta$] (d3) at (6,0.25);

        \fill (9,0) circle (1pt);
        \draw [thick, dashed] (9,0) circle (0.5);
        \draw [thick, dashed] (8.5,0) arc (180:360:0.5 and 0.2);
        \draw [thick, dotted] (9.5,0) arc (0:180:0.5 and 0.2);

        \draw [->, thick] (9,0) -- (9, 0.5);
        \draw [->, thick] (9,0.5) -- (9, 0);
        \coordinate [label=left:$\delta$] (d4) at (9,0.66);
    \end{tikzpicture}
    \caption{Examples of $C_{F^\circ, \delta}$ in a $3$-dimensional Euclidean space.}
    \label{fig:cylinder}
\end{figure}
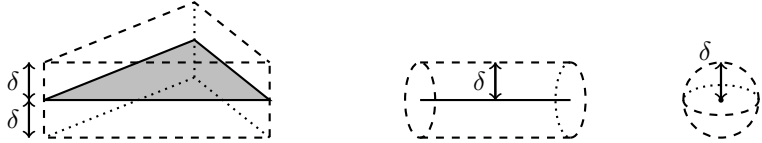

We can compute the volume of $C_{F^\circ, \delta}$.

\begin{lemma} \label{lem-vol-cylinder}
    Let $B_\delta^{k}$ be a $k$-dimensional ball of radius $\delta$.
    Then for an $i$-dimensional open face $F^\circ$ of an $n$-dimensional polytope $P$, we have
    \begin{equation*}
        \operatorname{Vol}_n (C_{F^\circ, \delta})  = \operatorname{Vol}_{i} (F^\circ) \times \operatorname{Vol}_{n - i} (B_\delta^{n - i}) 
         = \operatorname{Vol}_{i} (F^\circ) \times c_{n - i} \delta^{n - i},
    \end{equation*}
    where $c_{n - i} = \uppi^{\frac{n - i}{2}} / \Gamma( \frac{n - i}{2} + 1)$ and $\Gamma$ is the Euler's Gamma function.
\end{lemma}

\begin{proof}
    Clear since  $\operatorname{Vol}_k (B_\delta^{k}) = c_k \delta^k$.
\end{proof}

Note that $c_{n-i}$ is a constant that only depends on~$\dim F^\circ = i$, not on the shape or the volume of $F^\circ$.

\begin{lemma} \label{lem-N-union-cylinder}
    Let $\mathcal{F}_i$ be the set of $i$-dimensional open faces of an $n$-dimensional polytope $P$ with $n > 0$. 
    Then, for any $\delta > 0$,
    \begin{equation*}
        \mathcal{N}(\partial P, \delta) = \bigcup_{0 \le i \le n - 1} \bigcup_{F^\circ \in \mathcal{F}_i} C_{F^\circ, \delta}.
    \end{equation*}
\end{lemma}

\begin{proof}
    Suppose $y \in C_{F^\circ, \delta}$ for some open face $F^\circ$ of dimension strictly less than $n$.
    Then $y \in B^\perp_{x, \delta}$ for some $x \in F^\circ$.
    Thus $\operatorname{d} (y, \partial P) < \delta$ and $y \in \mathcal{N}(\partial P, \delta)$.
    This proves ``$\supseteq$''.

    Conversely, suppose $y \in E$ satisfies $\operatorname{d} (y, \partial P) < \delta$. We can assume $y \notin \partial P$.
    Since $\partial P$ is a bounded closed subset and hence a compact subset in~$E$, there exists $x \in \partial P$ (hence $x \ne y$) such that $\operatorname{d} (y, x) = \operatorname{d} (y, \partial P)$.
    Suppose $F^\circ$ is the open face containing $x$.
    We claim that the affine line $\langle \{x,y\} \rangle_\text{aff}$ is perpendicular to the affine subspace $\langle F^\circ \rangle_{\text{aff}}$.
    Otherwise, since $F^\circ$ is open in~$\langle F^\circ \rangle_\text{aff}$, we can find some $x'\in F^\circ$ such that $\operatorname{d}(y, x') < \operatorname{d} (y,x)$, which contradicts $\operatorname{d} (y, x) = \operatorname{d} (y, \partial P)$.
    In particular, our claim implies $y \in B^\perp_{x, \delta} \subseteq C_{F^\circ, \delta}$.
    This proves ``$\subseteq$''.
\end{proof}

For the example in Figure~\ref{fig:P-pmdelta}, the set $\mathcal{N}(\partial P, \delta)$ for a small $\delta$ is a union of three disks and three rectangles, see the gray area in Figure~\ref{fig:P-pmdelta-N}.

\begin{figure} [ht]
    \centering
    \begin{tikzpicture}
        \coordinate (A) at (0,0);
        \coordinate (B) at (4,0);
        \coordinate (C) at (60:4);

        \coordinate (Ain) at (30:1);
        \coordinate (Bin) at ($(B) + (150:1)$);
        \coordinate (Cin) at ($(C) + (270:1)$);

        \coordinate (A1) at ($(A) + (270:0.5)$);
        \coordinate (A2) at ($(A) + (150:0.5)$);
        \coordinate (B1) at ($(B) + (30:0.5)$);
        \coordinate (B2) at ($(B) + (270:0.5)$);
        \coordinate (C1) at ($(C) + (150:0.5)$);
        \coordinate (C2) at ($(C) + (30:0.5)$);

        \fill [fill=lightgray] (A2) -- (C1) -- ($(C1) + (330:1)$) -- ($(A2) + (330:1)$) -- (A2); 
        \fill [fill=lightgray] (C2) -- (B1) -- ($(B1) + (210:1)$) -- ($(C2) + (210:1)$) -- (C2); 
        \fill [fill=lightgray] (B2) -- (A1) -- ($(A1) + (90:1)$) -- ($(B2) + (90:1)$) -- (B2);
        \fill [fill=lightgray] (A2) arc (150:510:0.5);
        \fill [fill=lightgray] (B2) arc (-90:270:0.5);
        \fill [fill=lightgray] (C2) arc (30:390:0.5);

        \draw [dotted, thick] (A2) -- (C1) -- ($(C1) + (330:1)$) -- ($(A2) + (330:1)$) -- (A2); 
        \draw [dotted, thick] (C2) -- (B1) -- ($(B1) + (210:1)$) -- ($(C2) + (210:1)$) -- (C2); 
        \draw [dotted, thick] (B2) -- (A1) -- ($(A1) + (90:1)$) -- ($(B2) + (90:1)$) -- (B2);
        \draw [dotted, thick] (A2) arc (150:510:0.5);
        \draw [dotted, thick] (B2) arc (-90:270:0.5);
        \draw [dotted, thick] (C2) arc (30:390:0.5);
        
        \draw [thick] (A) -- (B) -- (C) -- (A);
    \end{tikzpicture}
    \caption{The set $\mathcal{N}(\partial P, \delta)$ equals the union of the dotted areas.}
    \label{fig:P-pmdelta-N}
\end{figure}
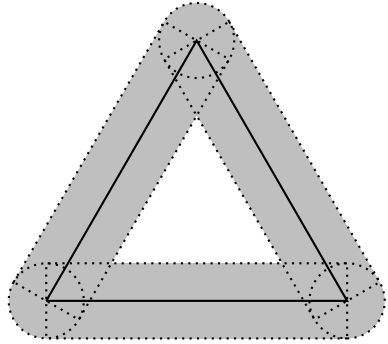

As a consequence, we have the following estimate of $\operatorname{Vol}_n (\mathcal{N}(\partial P, \delta))$.

\begin{proposition} \label{prop-neighbour-volume}
    Let $c_{k}$ and $\mathcal{F}_i$ be as in Lemmas~\ref{lem-vol-cylinder} and~\ref{lem-N-union-cylinder}.
    Then 
    \[\operatorname{Vol}_n (\mathcal{N}(\partial P, \delta)) \le \sum_{0 \le i \le n - 1} \Bigl(\sum_{F^\circ \in \mathcal{F}_i} \operatorname{Vol}_i (F^\circ) \Bigr)  c_{n - i} \delta^{n-i}.\]
\end{proposition}

\begin{proof}
     Follows immediately from Lemmas~\ref{lem-vol-cylinder} and~\ref{lem-N-union-cylinder}.
\end{proof}

Now, suppose $\{e_1, \dots, e_n\}$ is a basis of $E$.
Then, for any $k \in \mathbb{Z}_{> 0}$, 
\[L_k := \left\{\frac{m_1}{k} e_1 + \dots + \frac{m_n}{k} e_n \xmiddle| m_1, \dots, m_n \in \mathbb{Z}\right\}\] 
is a lattice in~$E$.
Only in this subsection, we define 
\[B := \left\{c_1 e_1 + \dots + c_n e_n \in E \xmiddle| c_1, \dots, c_n \in [0,1)\right\}\]
and 
\[B_k := \frac{1}{k} B = \left\{\frac{c_1}{k} e_1 + \dots + \frac{c_n}{k} e_n \xmiddle| c_1, \dots, c_n \in [0,1)\right\}\]
for any $k \in \mathbb{Z}_{> 0}$.
In subsequent subsections, we will use the notations $B$ and $B_k$ for the case $n = r-1$ and $e_i = \alpha_{i+1}\spcheck - \alpha_1 \spcheck$.
Note that for any $k$, $B_k$ is a fundamental domain of the translation action of $L_k$ on~$E$, and we have
\[\operatorname{diam}(B_k) = \frac{1}{k} \operatorname{diam}(B).\]
We have the following lemma.

\begin{lemma} \label{lem-union-box-nbhd}
    Let $\{e_1, \dots, e_n\}$ be a basis of $E$, and notations $L_k$ and $B_k$ be as above.
    Then, for any positive $\delta > 0$, 
    any integer $k > \operatorname{diam}(B) / \delta$, 
    any $n$-dimensional polytope $P$ in~$E$, and any set of lattice points $Z$ satisfying $(P \setminus \partial P) \cap L_k \subseteq Z \subseteq P \cap L_k$, we have
    \[P^{-\delta} \subseteq \bigsqcup_{x \in Z} (x + B_k) \subseteq P^{+\delta}.\]
\end{lemma}

\begin{proof}
    Let integer $k > \operatorname{diam}(B) / \delta$ be arbitrary.
    Then $\operatorname{diam} (B_k) < \delta$.
    Suppose $x \in P \cap L_k$.
    Then for any $y \in x + B_k$, we have
    \[\operatorname{d} (y, P) \le \operatorname{d} (y, x) \le \operatorname{diam} (B_k) < \delta.\]
    Therefore, $y \in P^{+\delta}$.
    Since $x \in P \cap L_k$ and $y \in x + B_k$ are arbitrary, we have
    \[\bigsqcup_{x \in P \cap L_k} (x+B_k) \subseteq P^{+\delta}.\]
    
    On the other hand, suppose $y \in P^{-\delta}$ (if $P^{-\delta} \ne \emptyset$).
    Then there exists a unique $x \in L_k$ such that $y \in x + B_k$, since $\{x + B_k \mid x \in L_k\}$ forms a partition of the ambient space $E$.
    Then 
    \begin{equation} \label{eq-lattice-neighbourhood}
        \operatorname{d} (y, x) \le \operatorname{diam} (B_k) < \delta.
    \end{equation}
    Next, we prove $x \in P \setminus \partial P$ by contradiction.
    Suppose $x \notin P$.
    Then 
    \[\operatorname{d} (y,x) > \operatorname{d} (y, \partial P) > \delta,\]
    since $y \in P^{-\delta} \subseteq P \setminus \partial P$ (the interior of $P$).
    But this contradicts \eqref{eq-lattice-neighbourhood}.
    Thus, $x \in P$.
    If $x \in \partial P$, then we have 
    \[\operatorname{d} (y, x) \ge \operatorname{d} (y, \partial P) > \delta,\]
    which is also a contradiction.
    Therefore, $x \in P \setminus \partial P$.
    Since $y \in P^{-\delta}$ is taken arbitrarily, we have 
    \[P^{-\delta} \subseteq \bigsqcup_{x \in (P \setminus \partial P) \cap L_k} (x + B_k).\]
    
    At last, we have the chain of containment relations
    \[P^{-\delta} \subseteq \bigsqcup_{x \in (P \setminus \partial P) \cap L_k} (x + B_k) 
    \subseteq \bigsqcup_{x \in Z} (x + B_k) 
    \subseteq \bigsqcup_{x \in P \cap L_k} (x+B_k) \subseteq P^{+\delta},\]
    because $(P \setminus \partial P) \cap L_k \subseteq Z \subseteq P \cap L_k$. The proof is complete.
\end{proof}

\subsection{Proof of Theorem~\ref{thm-main-in-the-introduction}\ref{thm-main-2-in-the-introduction}}
Return to the setting of Theorem~\ref{thm-main-in-the-introduction}\ref{thm-main-2-in-the-introduction}.
Note that in the case $r=1$, that is, $W$ is of affine type $A_1$, Theorem~\ref{thm-main-in-the-introduction}\ref{thm-main-2-in-the-introduction} holds trivially since $S_k (z) \equiv 1$. 
Therefore, we assume $r \ge 2$.

For $z \in [0, \ell(t_\lambda)]$, we define 
\[P^\lambda_z : = \operatorname{ht}^{-1} (z) = P^\lambda \cap H_{2\rho, z}.\]
Then $P^\lambda_z$ is a polytope in the $(r-1)$-dimensional Euclidean space $H_{2\rho, z}$ (see Lemma~\ref{lem-intersect-face}\ref{lem-intersect-face-1}).  
Moreover, since $P^\lambda$ is convex of dimension $r$, we have $\dim P^\lambda_z = r - 1$ except for the two extremal cases $z = 0$ and $z = \ell(t_\lambda)$, where $P^\lambda_0$ and $P^\lambda_{\ell(t_\lambda)}$ are both single points.
As in Section~\ref{subsec-Eud-geom}, we define
\begin{equation*}
    P^{\lambda, + \delta}_z := \{x \in H_{2 \rho, z} \mid \operatorname{d}(x, P^\lambda_z) < \delta \}, \quad
    P^{\lambda, - \delta}_z := \{x \in P^\lambda_z \mid \operatorname{d}(x, \partial P^\lambda_z) > \delta \}.
\end{equation*}
Then the set $P^{\lambda, + \delta}_z \setminus \overline{P^{\lambda, - \delta}_z}$ is the open neighbourhood
\[\mathcal{N}(\partial P^\lambda_z, \delta) := \{x \in H_{2 \rho, z} \mid \operatorname{d} (x, \partial P^\lambda_z) < \delta\}\]
of $\partial P^\lambda_z$ in~$H_{2 \rho, z}$.
Remember that $\mathcal{N}(\partial P^\lambda_z, \delta)$ is an open subset in the ($r-1$)-dimensional space $H_{2\rho, z}$.

\begin{lemma} \label{lem-Vol-P+-delta}
    For any real number $\varepsilon > 0$, there exists $\delta > 0$ small enough such that for any $z \in [0, \ell(t_\lambda)]$, we have
    \[\operatorname{Vol}_{r-1} (\mathcal{N}(\partial P^\lambda_z, \delta)) < \varepsilon.\]
    In particular, since $P^{\lambda, - \delta}_z \subseteq P^\lambda_z \subseteq P^{\lambda, + \delta}_z$, we have
    \[ \operatorname{Vol}_{r-1} (P^{\lambda, +\delta}_z) - \operatorname{Vol}_{r-1} (P^\lambda_z) < \varepsilon,\] 
    \[\operatorname{Vol}_{r-1} (P^\lambda_z) - \operatorname{Vol}_{r-1} (P^{\lambda,  -\delta}_z) < \varepsilon.\]
\end{lemma}

\begin{proof}
    For each $i = 0, 1, \dots, r-2$, let $\mathcal{F}_{z,i}$ be the set of $i$-dimensional open faces of $P^\lambda_z$, and $\mathcal{F}^\lambda$ be the set of all open faces of $P^\lambda$.
    By Lemma~\ref{lem-intersect-face}, each open face of $P^\lambda_z$ is an intersection of some open face in~$\mathcal{F}^\lambda$ with $H_{2\rho, z}$.
    Therefore, it holds that
    \[\sum_{0 \le i \le r-2} \lvert \mathcal{F}_{z,i} \rvert \le \lvert \mathcal{F}^\lambda \rvert, \text{ for any } z \in [0, \ell(t_\lambda)].\]
    Moreover, since $P^\lambda$ is a bounded area, there exists a uniform bound $M \in \mathbb{R}_+$ (independent of $z$) such that
    \[\operatorname{Vol}_{i} (F^\circ) \le M \text{ for any $z$ and any } F^\circ \in \mathcal{F}_{z,i}.\]
    Then by Proposition~\ref{prop-neighbour-volume}, for any $z \in [0, \ell(t_\lambda)]$, 
    \begin{align*}
        \operatorname{Vol}_{r-1} (\mathcal{N}(\partial P^\lambda_z, \delta)) & \le \sum_{0 \le i \le r-2}  \Bigl( \sum_{F^\circ \in \mathcal{F}_{z,i}} \operatorname{Vol}_i (F^\circ) \Bigr) c_{r-1-i} \delta^{r-1-i} \\
        & \le M \lvert \mathcal{F}^\lambda \rvert  \sum_{0 \le i \le r-2} c_{r-1-i}  \delta^{r-1-i} \\
        & = M \lvert \mathcal{F}^\lambda \rvert  (c_1 \delta + c_2 \delta^2 + \dots + c_{r-1} \delta^{r-1}),
    \end{align*}
    where $c_{1}, \dots, c_{r-1}$ are constants independent of $z$ (see Lemma~\ref{lem-vol-cylinder}).
    The existence of the desired $\delta$ is now clear.
\end{proof}

\begin{lemma} \label{lem-Vol-Px}
    For any real number $\varepsilon > 0$, there exists $\delta > 0$ small enough such that for any $z, z' \in [0, \ell(t_\lambda)]$ with $\lvert z- z' \rvert < \delta$, we have
    \[\lvert \operatorname{Vol}_{r-1} (P^\lambda_z) - \operatorname{Vol}_{r-1} (P^\lambda_{z'}) \rvert < \varepsilon.\]
\end{lemma}

\begin{proof}
    This is because the function $z \mapsto \operatorname{Vol}_{r-1} (P^\lambda_z)$ is a continuous function on~$[0, \ell(t_\lambda)]$, and hence uniformly continuous.
\end{proof}

Recall that $\{\alpha_1\spcheck, \dots, \alpha_r\spcheck\}$ is a basis of $E$, and 
\begin{equation} \label{eq-basis-subspace}
    \{\alpha_i\spcheck - \alpha_1\spcheck \mid i = 2, \dots, r \}
\end{equation}
is a basis of $H_{2\rho, 0}$, as well as a basis for the lattice $\mathbb{Z} \Phi\spcheck \cap H_{2 \rho, 0}$.
For arbitrary $z \in \mathbb{R}$, if we choose and fix an origin $o \in H_{2 \rho, z}$ for the Euclidean space $H_{2 \rho, z}$, then the set of vectors in \eqref{eq-basis-subspace} is also a basis for $H_{2 \rho, z}$.
If moreover $z \in 2 \mathbb{Z}$  and  $o$ is taken from $ \mathbb{Z} \Phi\spcheck \cap  H_{2 \rho, z}$, then \eqref{eq-basis-subspace} is also a basis for the lattice 
\[ \mathbb{Z} \Phi\spcheck \cap H_{2 \rho, z} = \left\{o + \sum_{1 \le i \le r} n_i \alpha_i\spcheck \xmiddle| \sum_{1 \le i \le r} n_i = 0 \right\}.\]
We define 
\begin{equation} \label{eq-def-B}
    B := \left\{ \sum_{2 \le i \le r} c_i (\alpha_i\spcheck - \alpha_1\spcheck) \xmiddle| c_2, \dots, c_r \in [0,1) \right\}
\end{equation}
to be the parallelotope in~$H_{2\rho, 0}$, and 
\[B_k := \frac{1}{k} B = \left\{ \sum_{2 \le i \le r} \frac{c_i}{k} (\alpha_i\spcheck - \alpha_1\spcheck) \xmiddle| c_2, \dots, c_r \in [0,1) \right\}\]
for any $k \in \mathbb{N}_{>0}$.
Then 
\[\operatorname{Vol}_{r-1} (B_k) = \frac{1}{k^{r-1}} \operatorname{Vol}_{r-1} (B).\]

\begin{remark}\label{rem-vol-independent-of-choice}
    There are many possible choices of basis for the lattice $\mathbb{Z} \Phi\spcheck \cap H_{2 \rho, 0}$.
    For example, 
    $\{\alpha_i\spcheck - \alpha_{i+1}\spcheck \mid i = 1, \dots, r-1\}$
    is another choice.
    Although the shape and the diameter of the corresponding parallelotope $B$ both depend on the chosen basis, the volume $\operatorname{Vol}_{r-1}(B)$ is independent of this choice. 
\end{remark}

As observed in Remark~\ref{rmk-b-i-lambda}, the numbers $b_i^{k\lambda}$ and $b_{i,+}^{k\lambda}$ are zero if $i$ is odd.
On the other hand, for $i$ even, we have the following estimation:
\begin{lemma} \label{lem-bVol-le-VolP}
    For any $\delta>0$, any integer $k > \operatorname{diam}(B) / \delta$, and any $i \in 2 \mathbb{Z}$, we have
    \[\operatorname{Vol}_{r-1} (P^{\lambda, -\delta}_{\frac{i}{k}}) \le b_{i,+}^{k \lambda} \cdot \operatorname{Vol}_{r-1} (B_k) \le b_{i}^{k \lambda} \cdot \operatorname{Vol}_{r-1} (B_k) \le \operatorname{Vol}_{r-1} (P^{\lambda, +\delta}_{\frac{i}{k}}).\]
\end{lemma}
\begin{proof}
    Let $i \in 2\mathbb{Z}$ be arbitrary.
    Recall that 
    \begin{align*}
        b_i^{k\lambda} & = \operatorname{Card} ( P^{k \lambda} \cap \mathbb{Z} \Phi\spcheck \cap H_{2 \rho, i}) \\
        & = \operatorname{Card} (P^\lambda \cap \frac{1}{k} \mathbb{Z} \Phi\spcheck \cap H_{2 \rho, \frac{i}{k}}) \\ 
        & = \operatorname{Card} (P^\lambda_{\frac{i}{k}} \cap L_{i,k}),
    \end{align*}
    where $L_{i,k} := (\frac{1}{k} \mathbb{Z} \Phi\spcheck) \cap H_{2 \rho, \frac{i}{k}}$.
    Let $o \in L_{i,k}$ be arbitrary.
    Then 
    \begin{equation*}
        L_{i,k} = \left\{o + \sum_{2 \le j \le r} \frac{n_j}{k} (\alpha_j\spcheck - \alpha_1\spcheck) \xmiddle| n_2, \dots, n_r \in \mathbb{Z} \right\}
    \end{equation*}
    is a lattice in the hyperplane $H_{2 \rho, \frac{i}{k}}$.
    Moreover, $o + B_k$ is a fundamental domain for the translation action of $L_{i,k}$ on~$H_{2 \rho, \frac{i}{k}}$.
    
    Let integer $k > \operatorname{diam}(B) / \delta$ be arbitrary.
    Then, by Lemma~\ref{lem-union-box-nbhd}, we have
    \[  \bigsqcup_{x \in P^\lambda_{\frac{i}{k}} \cap L_{i,k}} (x + B_k)  \subseteq P^{\lambda, + \delta}_{\frac{i}{k}}.\]
    Therefore, 
    \[b_i^{k \lambda} \cdot \operatorname{Vol}_{r-1} (B_k) \le \operatorname{Vol}_{r-1} (P^{\lambda, + \delta}_{\frac{i}{k}}).\]
    
    Similarly, 
    \[b_{i,+}^{k\lambda} = \operatorname{Card} ( P^{\lambda}_+ \cap \frac{1}{k} \mathbb{Z} \Phi\spcheck \cap H_{2 \rho, \frac{i}{k}}) = \operatorname{Card} ( P^{\lambda}_+ \cap L_{i,k}) .\]
    Notice that 
    \[ P^{\lambda}_{\frac{i}{k}} \setminus \partial P^{\lambda}_{\frac{i}{k}} \subseteq P^{\lambda}_+ \cap H_{2 \rho, \frac{i}{k}} \subseteq P^{\lambda}_{\frac{i}{k}}.\]
    By Lemma~\ref{lem-union-box-nbhd} again, 
    \[ P^{\lambda, - \delta}_{\frac{i}{k}} \subseteq \bigsqcup_{x \in P^\lambda_{+} \cap L_{i,k}} (x + B_k),   \]
    and hence 
    \[\operatorname{Vol}_{r-1} (P^{\lambda, - \delta}_{\frac{i}{k}}) \le b_{i,+}^{k \lambda} \cdot \operatorname{Vol}_{r-1} (B_k).\]
    This completes the proof.
\end{proof}

Recall that 
\begin{equation*}
    b_{j,f} = \operatorname{Card} \{w \in W_f \mid \ell(w) = j \}
\end{equation*}
is the coefficient of $q^j$ in~$\pi_f(q)$.
The following lemma is classical:
\begin{lemma} \label{lem-even=odd}
\begin{equation*}
    \sum_{\substack{0 \le j \le \lvert \Phi^+ \rvert\\  j\in 2\mathbb{Z}}} b_{j,f} =     \sum_{\substack{0 \le j \le \lvert \Phi^+ \rvert\\  j\in 2\mathbb{Z}+1}} b_{j,f} = \frac{\lvert W_f \rvert}{2}.
\end{equation*}
\end{lemma}

The following proposition states that the sequence of step functions $(S_k(z))_k$ converges uniformly to the continuous function 
\[z \mapsto \frac{\lvert W_f \rvert}{2 \operatorname{Vol}_{r-1} (B)} \operatorname{Vol}_{r-1}(P^\lambda_z),\]
where $B$ is as in Equation  \eqref{eq-def-B}.

\begin{proposition} \label{prop-unif-conv}
    For any $\varepsilon > 0$, there exists a positive integer $K$, such that for any integer $k > K$ and any $z \in [0, \ell(t_\lambda)]$, we have \[\lvert S_k(z) - c \operatorname{Vol}_{r-1}(P^\lambda_z) \rvert < \varepsilon,\] where
    \[c = \frac{\lvert W_f \rvert}{2 \operatorname{Vol}_{r-1}(B)}\]
    is a constant.
\end{proposition}

\begin{proof}
Let $\varepsilon > 0$ be arbitrary, and $\varepsilon' := \frac{\operatorname{Vol}_{r-1} (B)}{\lvert W_f \rvert} \varepsilon$.
By Lemma~\ref{lem-Vol-P+-delta}, there exists $\delta > 0$ small enough such that 
\begin{equation} \label{eq-main-2}
    \operatorname{Vol}_{r-1} (P^{\lambda, +\delta}_z) < \operatorname{Vol}_{r-1}(P^\lambda_z) + \varepsilon' \quad \text{for any $z \in [0, \ell(t_\lambda)]$,}
\end{equation}
and 
\begin{equation} \label{eq-main-2p}
    \operatorname{Vol}_{r-1} (P^{\lambda, -\delta}_z) > \operatorname{Vol}_{r-1}(P^\lambda_z) - \varepsilon' \quad \text{for any $z \in [0, \ell(t_\lambda)]$.}
\end{equation}
For such $\delta$, by Lemma~\ref{lem-bVol-le-VolP}, there exists a positive integer $K$ such that 
\begin{equation} \label{eq-main-3}
    b_{i}^{k\lambda} \operatorname{Vol}_{r-1} (B_k) \le \operatorname{Vol}_{r-1}(P^{\lambda, +\delta}_{\frac{i}{k}}) \quad \text{for any $k > K$ and any $i \in 2 \mathbb{Z}$,}
\end{equation}
and
\begin{equation} \label{eq-main-3p}
    b_{i,+}^{k\lambda} \operatorname{Vol}_{r-1} (B_k) \ge \operatorname{Vol}_{r-1}(P^{\lambda, -\delta}_{\frac{i}{k}}) \quad \text{for any $k > K$ and any $i \in 2 \mathbb{Z}$.}
\end{equation}
Moreover, by Lemma~\ref{lem-Vol-Px}, there exists $\delta' > 0$ such that for any $z, z' \in [0, \ell(t_\lambda)]$ with $\lvert z - z' \rvert < \delta'$, we have $\lvert \operatorname{Vol}_{r-1} (P^\lambda_z) - \operatorname{Vol}_{r-1} (P^\lambda_{z'}) \rvert < \varepsilon'$.
We can choose the integer $K$ large enough such that $\frac{\lvert \Phi^+ \rvert}{K} < \delta'$.
Then we have
\begin{equation}\label{eq-main-4}
  \begin{gathered}
      \lvert \operatorname{Vol}_{r-1} (P^\lambda_z) - \operatorname{Vol}_{r-1} (P^\lambda_{\frac{i+j}{k}}) \rvert < \varepsilon' \\ \text{for any $k > K$, $i , j \in \mathbb{Z}$ with $z \in [ \tfrac{i}{k}, \tfrac{i+1}{k} )$  and $0 \le j \le \lvert \Phi^+ \rvert$.}
  \end{gathered}
\end{equation}

Now, let $\delta$, $K$ be as above, and $k > K$, $z \in [0, \ell(t_\lambda)]$ be arbitrary.
There exists a unique $i \in \{0, 1, \dots, k \ell(t_\lambda)\}$ such that $z \in [ \frac{i}{k}, \frac{i+1}{k})$.
Consider the coefficient of $q^i$ in Corollary~\ref{cor-lattice-formula}, we have the inequalities
\begin{equation} \label{eq-main-1}
    \sum_{0 \le j \le \lvert \Phi^+ \rvert} b_{j,f} b_{i+j, +}^{k\lambda}  \le \prescript{f}{}{b}_i^{t_{k\lambda}} \le \sum_{0 \le j \le \lvert \Phi^+ \rvert} b_{j,f} b_{i+j}^{k\lambda}.
\end{equation}
Here the numbers $b_{i+j, +}^{k\lambda}$ and $b_{i+j}^{k\lambda}$ are regarded as zero if $i + j > \ell(t_{k\lambda})$.
Notice that $b^{k \lambda}_{i+j} = 0$ whenever $i + j$ is odd.
We have the following inequalities:
\begin{align*}
    &\prescript{f}{}{b}_i^{t_{k\lambda}}  \operatorname{Vol}_{r-1} (B_k)  \\ 
    \text{(by Inequality \eqref{eq-main-1})} \le & \sum_{0 \le j \le \lvert \Phi^+ \rvert} b_{j,f} b_{i+j}^{k\lambda}  \operatorname{Vol}_{r-1} (B_k) \\ 
      =& \sum_{\substack{0 \le j \le \lvert \Phi^+ \rvert \\ i + j \in 2 \mathbb{Z}}} b_{j,f} b_{i+j}^{k\lambda}  \operatorname{Vol}_{r-1} (B_k) \\
    \text{(by Inequality \eqref{eq-main-3})}  \le &  \sum_{\substack{0 \le j \le \lvert \Phi^+ \rvert \\ i + j \in 2 \mathbb{Z}}} b_{j,f} \operatorname{Vol}_{r-1} (P^{\lambda, + \delta}_{\frac{i+j}{k}}) \\
    \text{(by Inequality \eqref{eq-main-2})} < & \sum_{\substack{0 \le j \le \lvert \Phi^+ \rvert \\ i + j \in 2 \mathbb{Z}}} b_{j,f} \Bigl(\operatorname{Vol}_{r-1} (P^{\lambda}_{\frac{i+j}{k}}) + \varepsilon' \Bigr) \\
    \text{(by Inequality \eqref{eq-main-4})} < & \sum_{\substack{0 \le j \le \lvert \Phi^+ \rvert \\ i + j \in 2 \mathbb{Z}}} b_{j,f} \Bigl(\operatorname{Vol}_{r-1} (P^{\lambda}_{z}) + 2\varepsilon' \Bigr) \\
    \text{(by Lemma~\ref{lem-even=odd})} = & \frac{\lvert W_f \rvert}{2} \Bigl( \operatorname{Vol}_{r-1} (P^{\lambda}_{z}) + 2\varepsilon' \Bigr).
\end{align*}
At last, since $\operatorname{Vol}_{r-1}(B_k) = \frac{1}{k^{r-1}} \operatorname{Vol}_{r-1}(B)$, we obtain
\[S_{k}(z) = \frac{1}{k^{r-1}} \prescript{f}{}{b}_i^{t_{k\lambda}} < \frac{\lvert W_f \rvert}{2 \operatorname{Vol}_{r-1}(B)} \Bigl( \operatorname{Vol}_{r-1} (P^{\lambda}_{z}) + 2\varepsilon' \Bigr) = c \operatorname{Vol}_{r-1} (P^{\lambda}_{z}) + \varepsilon.\]

Similarly, by Inequalities \eqref{eq-main-1}, \eqref{eq-main-3p}, \eqref{eq-main-2p}, \eqref{eq-main-4} and Lemma~\ref{lem-even=odd}, we have
\[S_k(z) > c \operatorname{Vol}_{r-1} (P^{\lambda}_{z}) - \varepsilon,\]
and we are done.
\end{proof}

Proposition~\ref{prop-unif-conv} proves Theorem~\ref{thm-main-in-the-introduction}\ref{thm-main-2-in-the-introduction} up to the following equality:
\begin{equation} \label{eq-main-eq}
    \frac{1}{\operatorname{Vol}_r(A_+) \cdot \lVert \rho \rVert} = \frac{\lvert W_f \rvert}{\operatorname{Vol}_{r-1} (B)},
\end{equation}
where $B$ is as in Equation  \eqref{eq-def-B}.
We prove this equality in the next subsection.

\subsection{Proof of Equation  (\ref{eq-main-eq})} \label{subsec-pf-eq}
In this subsection, we provide two independent proofs of Equation \eqref{eq-main-eq}.
The first proof uses the convergence results (Theorem~\ref{thm-main-in-the-introduction}\ref{thm-main-1-in-the-introduction} and Proposition~\ref{prop-unif-conv}), while the second only uses Lie-theoretical information.

\begin{proof}[First proof]
    By our definitions of the discrete measure $\mathfrak{m}_k$ and the step function $S_k(z)$, 
    we have
    \begin{equation} \label{eq-pf-eq-1-1}
    \begin{aligned}
        \mathfrak{m}_k([0, \ell(t_\lambda)]) 
        & = \frac{1}{k^r} \sum_{0 \le i\le \ell(t_{k\lambda})} \prescript{f}{}{b}_i^{t_{k\lambda}} \\
        & = \int_0^{\ell(t_\lambda)} S_k(z) \, \mathrm{d}z + \frac{\prescript{f}{}{b}_{\ell(t_{k\lambda})}^{t_{k\lambda}}}{k^r} \\
        & = \int_0^{\ell(t_\lambda)} S_k(z) \, \mathrm{d}z + \frac{1}{k^r}.
    \end{aligned}
   \end{equation}
    
    By the weak convergence result (Theorem~\ref{thm-main-in-the-introduction}\ref{thm-main-1-in-the-introduction}), we have
    \begin{equation} \label{eq-pf-eq-1-3}
        \lim_{k \to \infty} \mathfrak{m}_k([0,\ell(t_\lambda)]) = \frac{\operatorname{Vol}_r(P^\lambda)}{\operatorname{Vol}_r(A_+)}.
    \end{equation}
    While by the uniform convergence (Proposition~\ref{prop-unif-conv}), 
    we have
    \begin{equation} \label{eq-pf-eq-1-4}
      \begin{aligned}
          \lim_{k \to \infty} \int_0^{\ell(t_\lambda)} S_k(z) \, \mathrm{d}x & = \int_0^{\ell(t_\lambda)} \frac{\lvert W_f \rvert \cdot \lVert 2\rho \rVert}{2\operatorname{Vol}_{r-1} (B)} g(z) \, \mathrm{d}z \\
          & = \frac{\lvert W_f \rvert \cdot \lVert \rho \rVert \cdot \operatorname{Vol}_r(P^\lambda)}{\operatorname{Vol}_{r-1} (B)}.
      \end{aligned}
    \end{equation}
    Because of Equation  \eqref{eq-pf-eq-1-1}, the two limits in \eqref{eq-pf-eq-1-3} and \eqref{eq-pf-eq-1-4} must coincide.
    This gives Equation  \eqref{eq-main-eq}.
\end{proof}

\begin{proof}[Second proof]
    Retain the notation $D$ from Section~\ref{subsec-measure}.
    Then 
    \[\overline{D} = \left\{\sum_{1 \le i \le r} a_i \alpha_i \spcheck \in E \xmiddle| 0 \le a_i \le 1 \text{ for all } i \right\}\]
    is the closed parallelotope spanned by simple coroots.
    Let 
    \[\Delta := \operatorname{Conv}\{0, \alpha_1 \spcheck, \dots, \alpha_r\spcheck\}\]
    be the simplex with vertices $\{0, \alpha_1 \spcheck, \dots, \alpha_r\spcheck\}$.
    It is well known that 
    \begin{equation} \label{eq-pf-eq-2-1}
        \operatorname{Vol}_r (\overline{D}) = r ! \cdot \operatorname{Vol}_r(\Delta).
    \end{equation}
    
    Let $Z := H_{\rho, 1} \cap \overline{D}$.
    Then, $Z$ is the face of $\Delta$ containing $\alpha_1 \spcheck, \dots, \alpha_r \spcheck$ since $(\rho | \alpha_i\spcheck) = 1$.
    The distance from $0$ to $H_{\rho,1}$ equals $\frac{1}{\lVert \rho \rVert}$, and we have
    \begin{equation} \label{eq-pf-eq-2-2}
        \operatorname{Vol}_r(\Delta) = \frac{1}{r} \cdot \frac{1}{\lVert \rho \rVert} \cdot \operatorname{Vol}_{r-1} (Z).
    \end{equation}
    Notice that
    \begin{align*}
        Z & = \left\{ \sum_{1 \le i \le r} a_i \alpha_i\spcheck \xmiddle| 0 \le a_i \le 1 \text{ for all $i$, and } \sum_i a_i = 1 \right\} \\
        & = \left\{ \alpha_1\spcheck + \sum_{2 \le i \le r} a_i (\alpha_i\spcheck - \alpha_1 \spcheck) \xmiddle| 0 \le a_i \le 1 \text{ for all $i$, and } \sum_{2 \le i \le r} a_i \le 1\right\}.
    \end{align*}
    Therefore, $Z$ is a translation of the $(r-1)$-dimensional simplex with vertices 
    \[0, \alpha_2\spcheck - \alpha_1 \spcheck, \dots, \alpha_r\spcheck - \alpha_1 \spcheck.\]
    Note also that $B$ is the parallelotope spanned by $\alpha_2\spcheck - \alpha_1 \spcheck, \dots, \alpha_r\spcheck - \alpha_1 \spcheck$.
    Therefore
    \begin{equation} \label{eq-pf-eq-2-3}
        (r-1)! \cdot\operatorname{Vol}_{r-1} (Z) = \operatorname{Vol}_{r-1}(B).
    \end{equation}
    Combining Equations \eqref{eq-pf-eq-2-1}, \eqref{eq-pf-eq-2-2}, \eqref{eq-pf-eq-2-3}, and Lemma~\ref{lem-vol-D}, we have
    \[\frac{\operatorname{Vol}_{r-1} (B)}{\lVert \rho \rVert} = \operatorname{Vol}_r(\overline{D}) = \operatorname{Vol}_r (D) = \lvert W_f \rvert \cdot \operatorname{Vol}_r(A_+)\]
    which gives Equation \eqref{eq-main-eq}.
\end{proof}

\begin{remark} \label{rmk-deduce}
    The weak convergence result (Theorem~\ref{thm-main-in-the-introduction}\ref{thm-main-1-in-the-introduction}) can also be deduced from the uniform convergence (Proposition~\ref{prop-unif-conv}) and Equations \eqref{eq-density} and \eqref{eq-main-eq}.
\end{remark}

\section{General dominant elements} \label{sec-general}

Theorem~\ref{thm-main-in-the-introduction}\ref{thm-main-2-in-the-introduction} can be extended to general dominant lower intervals.
For $y\in \prescript{f}{}{W}$, we define  $\prescript{f}{}{b}_i^{y}:= \operatorname{Card} \left\{x \in \prescript{f}{}{W} \xmiddle| x \le y, \ell(x) = i \right\}$.
\begin{theorem}
    Let $\lambda \in \mathbb{Z} \Phi \spcheck \cap \overline{C_+}$.
    Suppose $(w_k)_k$ is a sequence of elements in~$\prescript{f}{}{W}$ such that $w_k \in t_{k\lambda}W_f$ for each $k$.
    We define the step functions $S_k$ by
    \begin{equation*}
        S_k(z) := k^{-(r-1)} \prescript{f}{}{b}_i^{w_k},
    \end{equation*}
    whenever $z \in \left[ \frac{i}{k}, \frac{i+1}{k} \right)$.
    Then $(S_k)_k$ converges uniformly to
    $g / \operatorname{Vol}_r(A_+)$.
\end{theorem} 
\begin{proof}
Let $\lambda \in \mathbb{Z} \Phi \spcheck \cap \overline{C_+}$ 
and $J := \{i \mid 1 \le i\le r, (\lambda | \alpha_i) \ne 0\}$.
Then, $\lambda = \sum_{i \in J} a_i \varpi_i\spcheck$ where each $a_i \in \mathbb{R}_{>0}$ and the $\varpi_i\spcheck$'s are the fundamental coweights, that is, $(\varpi_i\spcheck | \alpha_j) = \delta_{ij}$.
Note that each $\varpi_i\spcheck$ is a rational combination of simple coroots.
Therefore, there is a positive integer $n$ such that $\lambda_0 := n\sum_{i \in J} \varpi_i\spcheck \in \mathbb{Z} \Phi\spcheck$.
For this $n$, there is $k_0 \in \mathbb{N}$ such that for any $k \ge k_0$ we have 
\begin{equation*}
k\lambda - \lambda_0 \in \sum_{i \in J} \mathbb{R}_{>0} \varpi_i\spcheck.
\end{equation*}
In particular, $\lambda_0, k\lambda - \lambda_0 \in \overline{C_+}$ and $\prescript{\lambda}{}{W_f} = \prescript{k\lambda}{}{W_f} = \prescript{\lambda_0}{}{W_f}$.

For any $k \ge k_0$, we write $w_k = t_{k\lambda} u_k$ where $u_k \in W_f$.
By Lemma~\ref{lem-lattice-onwall}, the assumption $w_k \in \prescript{f}{}{W}$ forces that $u_k \in \prescript{\lambda}{}{W_f}$.
Moreover, we have $w_k \le t_{k\lambda}$.
On the other hand, by Lemmas~\ref{lem-lattice-onwall} and~\ref{lem-length-trans}, we have
\begin{equation*}
    \ell(w_k)  = \ell(t_{k\lambda}) - \ell(u_k) 
      = \ell(t_{k\lambda - \lambda_0}) + \ell(t_{\lambda_0}) - \ell(u_k) 
      = \ell(t_{k\lambda - \lambda_0}) + \ell(t_{\lambda_0} u_k).
\end{equation*}
Note that $w_k = t_{k\lambda - \lambda_0} t_{\lambda_0} u_k$.
Therefore, $t_{k\lambda - \lambda_0} \le w_k \le t_{k\lambda}$.
Hence, for all $i$ we have
\[\prescript{f}{}{b}_i^{t_{k\lambda-\lambda_0}} \le \prescript{f}{}{b}_i^{w_k} \le \prescript{f}{}{b}_i^{t_{k\lambda}}.\]
Let $S'_k$ and $S''_k$ be the corresponding sequences of step functions associated with the sequences $(\prescript{f}{}{b}_i^{t_{k\lambda-\lambda_0}})_i$ and $(\prescript{f}{}{b}_i^{t_{k\lambda}})_i$, respectively.
By Theorem~\ref{thm-main-in-the-introduction}\ref{thm-main-2-in-the-introduction}, $(S''_k)_k$ converges uniformly to $g / \operatorname{Vol}_r(A_+)$.
While for $(S'_k)_k$, using the same arguments as in Section~\ref{sec-main-unifm-convg}, it can be proved that $(S'_k)_k$ converges uniformly to $g / \operatorname{Vol}_r(A_+)$ as well.
We omit the details.
\end{proof}

\bibliographystyle{amsplain}
\bibliography{template} 

\end{document}